\documentclass[11pt]{amsart}
\usepackage{amsmath, amsthm, amssymb} 
\usepackage{mathrsfs} 
\usepackage[bookmarks=false]{hyperref}

\setlength{\evensidemargin}{0pt}
\setlength{\oddsidemargin}{0pt}
\setlength{\topmargin}{-10pt}
\setlength{\textheight}{640pt}
\setlength{\textwidth}{450pt}
\setlength{\headsep}{20pt}
\setlength{\footskip}{30pt}
\setlength{\parindent}{0pt}
\setlength{\parskip}{1ex plus 0.5ex minus 0.2ex}

\setcounter{tocdepth}{1}

\newtheorem{theo}{Theorem}[section]

\newtheorem{letterthm}{Theorem}

\newtheorem{lettercor}[letterthm]{Corollary}

\newtheorem{cor}[theo]{Corollary}
\newtheorem{lem}[theo]{Lemma}
\newtheorem{prop}[theo]{Proposition}

\theoremstyle{definition}
\newtheorem{rem}[theo]{Remark}

\newtheorem{example}[theo]{Example}

\newtheorem{df}[theo]{Definition}

\newtheorem{step}{Step}
\newtheorem*{claim}{Claim}

\newcommand{\R}{\mathbf{R}}
\newcommand{\C}{\mathbf{C}}
\newcommand{\Z}{\mathbf{Z}}
\newcommand{\F}{\mathbf{F}}

\newcommand{\N}{\mathbf{N}}

\newcommand{\Ad}{\operatorname{Ad}}
\newcommand{\id}{\text{\rm id}}
\newcommand{\Tr}{\text{\rm Tr}}
\newcommand{\Leb}{\text{\rm Leb}}

\newcommand{\Aut}{\operatorname{Aut}}

\newcommand{\Inn}{\operatorname{Inn}}
\newcommand{\Out}{\operatorname{Out}}
\newcommand{\QN}{\mathcal{Q}\mathcal{N}}

\newcommand{\weak}{\mathord{\text{\rm weak}}}
\newcommand{\alg}{\mathord{\text{\rm alg}}}
\newcommand{\op}{\mathord{\text{\rm op}}}
\newcommand{\LL}{\mathord{\text{\rm L}}}
\newcommand{\ovt}{\mathbin{\overline{\otimes}}}

\newcommand{\cb}{\text{\rm cb}}

\newcommand{\dpr}{^{\prime\prime}}

\begin{document}

\title[Structure of ${\rm II_1}$ factors arising from free Bogoljubov actions]{Structure of ${\rm II_1}$ factors arising from free Bogoljubov actions of arbitrary groups}

\begin{abstract}
In this paper, we investigate several structural properties for crossed product ${\rm II_1}$ factors $M$ arising from free Bogoljubov actions associated with orthogonal representations $\pi : G \to \mathcal O(H_\R)$ of arbitrary countable discrete groups. Under fairly general assumptions on the orthogonal representation $\pi : G \to \mathcal O(H_\R)$, we show that $M$ does not have property Gamma of Murray and von Neumann. Then we show that any regular amenable subalgebra $A \subset M$ can be embedded into $\LL(G)$ inside $M$. Finally, when $G$ is assumed to be amenable, we locate precisely any possible amenable or Gamma extension of $\LL(G)$ inside $M$.
\end{abstract}

\author{Cyril Houdayer }

\address{CNRS-ENS Lyon \\
UMPA UMR 5669 \\
69364 Lyon cedex 7 \\
France}

\email{cyril.houdayer@ens-lyon.fr}

\thanks{Research supported by ANR grants AGORA and NEUMANN}

\subjclass[2010]{46L10; 46L54; 46L55; 22D25}

\keywords{Free Gaussian functor; Cartan subalgebras; Deformation/rigidity theory; Asymptotic orthogonality property}

\maketitle

\section{Introduction and statement of the main results}

In classical probability theory, there is a well known construction that associates with any orthogonal representation $\pi : G \to \mathcal O(H_\R)$ of a countable discrete group $G$ a probability measure-preserving action $G \curvearrowright (X_\pi, \mu_\pi)$ on a standard probability space. This action is called the {\em Gaussian action} associated with the orthogonal representation $\pi$. By construction, the Koopman representation of the Gaussian action contains $\pi$ as a subrepresentation (see \cite[Appendix~D]{kechris}). For instance, when $\lambda_G : G \to \mathcal O(\ell^2_\R(G))$ is the left regular orthogonal representation, the Gaussian action $G \curvearrowright (X_{\lambda_G}, \mu_{\lambda_G})$ is nothing but the {\em Bernoulli shift} $G \curvearrowright ([0, 1]^G, \Leb^G)$.

In the framework of his free probability theory, Voiculescu \cite{voiculescu85} introduced in the mid 80s the analogue of the Gaussian construction in this setting: the {\em free Gaussian functor} (see also \cite[Chapter 2]{voiculescu92}). To any real Hilbert space $H_\R$, one associates a tracial von Neumann algebra, denoted by $\Gamma(H_\R)\dpr$, which is $\ast$-isomorphic to the free group factor $\LL(\F_{\dim H_\R})$ on $\dim H_\R$ generators. Within this framework, the free group factor $\Gamma(H_\R)\dpr$ is generated by semicircular elements $W(e)$, $e \in H_\R$, which enjoy the following {\em freeness} property: whenever $(e_i)_{i \geq 1}$ is an orthogonal family in $H_\R$, the family of noncommutative random variables $(W(e_i))_{i \geq 1}$ is $\ast$-free with respect to the canonical trace $\tau$ on $\Gamma(H_\R)\dpr$. As we will see in Section \ref{preliminaries}, the semicircular elements $W(e)$ can be alternatively regarded as {\em words} of length one. To any orthogonal representation $\pi : G \to \mathcal{O}(H_\R)$ of any countable discrete group $G$ corresponds a unique trace-preserving action $\sigma_\pi : G \curvearrowright \Gamma(H_\R)\dpr$ called the {\em free Bogoljubov action} associated with the orthogonal representation $\pi$. The action $\sigma_\pi$ satisfies the following relation: 
$$\sigma_\pi(g)(W(e)) = W(\pi(g) e), \forall e \in H_\R, \forall g \in G.$$
We refer to Section \ref{preliminaries} for more information on Voiculescu's free Gaussian functor. We will denote by $\Gamma(H_\R)\dpr \rtimes_\pi G$ the tracial crossed product von Neumann algebra corresponding to the free Bogoljubov action $\sigma_\pi : G \curvearrowright \Gamma(H_\R)\dpr$. For instance, when $\lambda_G : G \to \mathcal O(\ell^2_\R(G))$ is the left regular orthogonal representation, the free Bogoljubov action $\sigma_{\lambda_G} : G \curvearrowright \Gamma(\ell^2_\R(G))\dpr$ is nothing but the {\em free Bernoulli shift} $G \curvearrowright \ast_{g \in G} (\LL(\Z), \tau)$. In that case, the crossed product von Neumann algebra $\Gamma(\ell^2_\R(G))\dpr \rtimes_{\lambda_G} G$ is $\ast$-isomorphic to the free product von Neumann algebra $\LL(\Z) \ast \LL(G)$.

In this paper, we use Popa's deformation/rigidity theory \cite{{popa-icm}, {vaes-icm}, {ioana-ecm}} to investigate several structural properties for the crossed products ${\rm II_1}$ factors $\Gamma(H_\R)\dpr \rtimes_\pi G$ arising from free Bogoljubov actions of countable discrete groups. The first {\em rigidity} results for ${\rm II_1}$ factors arising from free Bernoulli shifts of property (T) groups were obtained by Popa in \cite{popa-bernoulli}, using his malleable deformation for the free group factors. In \cite{ipp}, Ioana, Peterson and Popa discovered a malleable deformation for amalgamated free product ${\rm II_1}$ factors which they used to obtain rigidity results for such factors and calculate their symmetry groups.

Popa \cite{popasup} discovered that in many previous arguments in deformation/rigidity theory, the property (T) condition could be removed and replaced by a {\em spectral gap} rigidity condition. This fundamental discovery lead to several {\em structural} results for ${\rm II_1}$ factors arising from free probability theory. For instance, Popa \cite{popa-solid} used his spectral gap rigidity principle to give another proof of Ozawa's result \cite{ozawa} showing that the free group factors are {\em solid}, that is, the relative commutant of any diffuse von Neumann subalgebra is amenable (we also refer to Peterson's work on $\LL^2$-derivations \cite{peterson-L2} and its applications). Subsequently, Chifan and the author \cite{chifan-houdayer} used the malleable deformation from \cite{ipp} together with Popa's principle \cite{popasup} to obtain structural properties, such as {\em primeness}, for a large class of amalgamated free products factors (see also \cite{houdayer-vaes}). Ozawa and Popa \cite{ozawa-popa} also used this spectral gap rigidity principle to prove that the free group factors are in fact {\em strongly solid}, that is, the normalizer of any diffuse amenable von Neumann subalgebra is amenable. This result strengthened both Voiculescu's result in \cite{voiculescu96} showing that the free group factors have no Cartan subalgebra and Ozawa's result in \cite{ozawa} showing that the free group factors are solid.

Recently, Shlyakhtenko and the author \cite{houdayer-shlyakhtenko} obtained several structural results, such as {\em absence of Cartan subalgebra}, for the ${\rm II_1}$ factors $\Gamma(H_\R)\dpr \rtimes_\pi G$ arising from free Bogoljubov actions of {\em amenable} groups. The amenability of $G$ was essential to ensure that the crossed product von Neumann algebra $\Gamma(H_\R)\dpr \rtimes_\pi G$ has the complete metric approximation property \cite{cowling-haagerup} in order to use Ozawa-Popa's results \cite{ozawa-popa}. For instance, it was proven in \cite[Theorem B]{houdayer-shlyakhtenko} that when the orthogonal representation $\pi : \Z \to \mathcal O(H_\R)$ is mixing, the crossed product ${\rm II_1}$ factor $\Gamma(H_\R)\dpr \rtimes_\pi \Z$ is strongly solid. This gave new examples of strongly solid ${\rm II_1}$ factors which are not $\ast$-isomorphic to interpolated free group factors (see also \cite{houdayer7}).

The aim of the paper is thus to generalize these previous results as well as to obtain new structural properties for the ${\rm II_1}$ factors $\Gamma(H_\R)\dpr \rtimes_\pi G$ arising from free  Bogoljubov actions associated with orthogonal representations $\pi : G \to \mathcal O(H_\R)$ of {\em arbitrary} countable discrete groups.

\subsection*{Property Gamma}

Our first result deals with property Gamma of Murray and von Neumann \cite{MvN}. Recall that a ${\rm II_1}$ factor $(M, \tau)$ has {\em property Gamma} if there exists a net of unitaries $u_i \in \mathcal U(M)$ such that $\tau(u_k) = 0$ for all $k$ and $\lim_k \|u_k y - y u_k\|_2 = 0$ for all $y \in M$. When $M$ has separable predual, Connes' result \cite[Corollary 3.8]{connes74} shows that $M$ does not have property Gamma if and only if the group of inner automorphisms $\Inn(M)$ is closed in the group of all automorphisms $\Aut(M)$. Observe that in that case, $\Out(M) = \Aut(M) / \Inn(M)$ is a Polish group \cite{connes74}.

Let $Q$ be a ${\rm II_1}$ factor with separable predual which does not have property Gamma. Denote by $\Pi : \Aut(Q) \to \Out(Q)$ the quotient homomorphism. In \cite[Theorem 1]{jones-full}, Jones proved that whenever $\sigma : G \to \Aut(Q)$ is a faithful action of a countable discrete group for which $\Pi(\sigma(G))$ is discrete in $\Out(Q)$, then the crossed product ${\rm II_1}$ factor $Q \rtimes_\sigma G$ does not have property Gamma.

Inspired by Jones' result, we find a sufficient condition on the orthogonal representation $\pi : G \to \mathcal O(H_\R)$ which ensures that the crossed product ${\rm II_1}$ factor $\Gamma(H_\R)\dpr \rtimes_\pi G$ does not have property Gamma.

\begin{letterthm}\label{thmA}
Let $G$ be any countable discrete group and $\pi : G \to \mathcal O(H_\R)$ any faithful orthogonal representation such that $\dim H_\R \geq 2$ and $\pi(G)$ is discrete in $\mathcal O(H_\R)$ with respect to the strong topology. Then $M = \Gamma(H_\R)\dpr \rtimes_\pi G$ is a ${\rm II_1}$ factor which does not have property Gamma.
\end{letterthm}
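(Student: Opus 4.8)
The plan is to prove that $M = \Gamma(H_\R)\dpr \rtimes_\pi G$ is a factor without property Gamma by using the structure of the free Bogoljubov action together with the discreteness of $\pi(G)$. The key idea, inspired by Jones' criterion, is that discreteness of $\pi(G)$ in $\mathcal O(H_\R)$ should translate into discreteness of the induced automorphisms modulo inner automorphisms on a suitable non-Gamma building block. So first I would establish that $M$ is indeed a ${\rm II_1}$ factor; this should follow from freeness and the assumption $\dim H_\R \geq 2$, since the free Bogoljubov action is then properly outer (in fact the action is free and ergodic on the center, because $\pi$ is faithful and $\Gamma(H_\R)\dpr$ is a factor when $\dim H_\R \geq 2$, so no nontrivial fixed points arise). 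Factoriality of the crossed product then reduces to checking that the action has no nontrivial invariant elements in $\Gamma(H_\R)\dpr$ commuting with all of $G$, which I would verify via the word-length decomposition of $\Gamma(H_\R)\dpr$.

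Next, the main work is to rule out property Gamma. I would argue by contradiction: suppose $(u_k)$ is a net of centralizing unitaries with $\tau(u_k)=0$. Expanding each $u_k$ in the crossed-product Fourier decomposition $u_k = \sum_{g \in G} x_{k,g} v_g$ (where $v_g$ are the canonical unitaries implementing $\sigma_\pi$), the centralizing condition against the unitaries $v_g$ forces the ``coefficients'' to be asymptotically invariant under the action $\sigma_\pi$, while centralizing against the semicircular generators $W(e)$ constrains them via the Bogoljubov relation $\sigma_\pi(g)(W(e)) = W(\pi(g)e)$. The discreteness of $\pi(G)$ is exactly what prevents these two constraints from being simultaneously satisfiable by a sequence with $\tau(u_k)=0$: asymptotic commutation with the $W(e)$ should push the mass of $u_k$ onto the subalgebra $\LL(G)$ (since the semicircular part has no nontrivial almost-central elements — $\Gamma(H_\R)\dpr$ being a full factor for $\dim H_\R \geq 2$), and then asymptotic commutation with the $v_g$ combined with discreteness of $\pi(G)$ forces the mass onto the trivial representation, contradicting the trace-zero condition together with the faithfulness of $\pi$.

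The cleanest route, which I would pursue in preference to the direct computation above, is to identify a non-Gamma ``core'' and apply Jones' theorem verbatim. Concretely, I would like to realize $M$ as a crossed product $Q \rtimes_\sigma G$ where $Q$ is a full factor and $\Pi(\sigma(G))$ is discrete in $\Out(Q)$. The natural choice is $Q = \Gamma(H_\R)\dpr$ itself, which is a free group factor $\LL(\F_{\dim H_\R})$ and hence, for $\dim H_\R \geq 2$ (so $\dim H_\R \in \{2,3,\dots,\infty\}$), does not have property Gamma — this is the classical fullness of the free group factors on at least two generators. It then remains to show that the map $G \to \Out(Q)$ induced by $g \mapsto \sigma_\pi(g)$ is injective with discrete image. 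Injectivity modulo $\Inn(Q)$ requires that no $\sigma_\pi(g)$ with $g \neq e$ is inner; this I would deduce from the fact that an inner automorphism fixes the trace and acts trivially on the center, whereas $\sigma_\pi(g)$ realizes the nontrivial orthogonal transformation $\pi(g)$ on the length-one words $W(e)$, and inner automorphisms of a free group factor cannot permute a free semicircular family nontrivially unless $\pi(g)=\id$ (which faithfulness forbids).

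The main obstacle is the last step: proving that $\Pi(\sigma_\pi(G))$ is \emph{discrete} in $\Out(Q)$, i.e. that $\pi(g_k) \to \id$ strongly in $\mathcal O(H_\R)$ is forced whenever $\sigma_\pi(g_k) \to \id$ in $\Out(Q)$ (equivalently, whenever $\sigma_\pi(g_k)$ converges to an inner automorphism in $\Aut(Q)$). This is where the hypothesis that $\pi(G)$ is strongly discrete must be converted into a statement about the topology on $\Out(\Gamma(H_\R)\dpr)$. I would establish a continuity/quantitative estimate showing that the map $\pi(g) \mapsto \Pi(\sigma_\pi(g))$ is a homeomorphism onto its image — more precisely, that $\|\pi(g) - \id\|$ (on a fixed finite-dimensional piece of $H_\R$) is controlled by the distance from $\sigma_\pi(g)$ to $\Inn(Q)$, using that the $W(e)$ are smooth vectors for the action and that fullness of $Q$ gives a uniform spectral gap converting the modular-type distance in $\Out(Q)$ back into movement of the generators $W(e)$. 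Once this homeomorphism property is in hand, strong discreteness of $\pi(G)$ yields discreteness of $\Pi(\sigma_\pi(G))$, and Jones' theorem (applied to $Q$ full and $\sigma_\pi$ with discrete outer image) finishes the proof that $M$ has no property Gamma.
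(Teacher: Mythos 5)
Your reduction to Jones' theorem founders on exactly the step you flag as ``the main obstacle,'' and the justification you sketch for it is not salvageable as stated. Jones' criterion needs $\Pi(\sigma_\pi(G))$ to be discrete in $\Out(\Gamma(H_\R)\dpr)$, whereas the hypothesis only gives discreteness of $\sigma_\pi(G)$ in $\Aut(\Gamma(H_\R)\dpr)$ (this much is immediate, since $\|\sigma_\pi(g)(W(e)) - W(e)\|_2 = \|\pi(g)e - e\|$). Passing from Aut-discreteness to Out-discreteness is not a soft consequence of fullness: in a Polish group $P$ with a closed normal subgroup $N$, a discrete subgroup $\Gamma$ with $\Gamma \cap N = \{1\}$ can have dense image in $P/N$ (take $P = \R^2$, $N$ a line of irrational slope, $\Gamma = \Z^2$), so no amount of ``$\Inn(Q)$ is closed'' plus topology will do it. What is needed is a \emph{uniform} statement over the group: spectral gap for the twisted $Q$-$Q$-bimodules $\LL^2(Q)_{\sigma_\pi(g)}$ uniformly in $g \neq e$, equivalently that no sequence $\Ad(v_k)\circ\sigma_\pi(g_k)$ with $g_k \neq e$ converges to the identity. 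Fullness of $Q = \Gamma(H_\R)\dpr$ only gives this for each \emph{fixed} $g$ (each $\sigma_\pi(g)$ is outer and $\Inn(Q)$ is closed, so no single twisted bimodule has almost-central vectors); the uniformity over infinitely many $g$ is precisely what must be extracted from the free-probability structure, and your proposal never does this. The same uniformity problem undermines your ``direct computation'' variant: asymptotic commutation with the $W(e)$'s kills each fixed Fourier coefficient $x_{k,g}$, $g \neq e$ (again by fullness plus outerness), but does not show $\sum_{g \neq e}\|x_{k,g}\|_2^2 \to 0$, which is what is actually needed to push the mass onto $\LL(G)$.

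The paper proves precisely this missing uniform statement, but in a different and crucially more tractable form, and this is why it explicitly does \emph{not} use Jones' theorem. The key step (Proposition \ref{commutant}) is that for infinite-dimensional $\pi$, \emph{every} central sequence of $M$ lies asymptotically in $\LL(G)^\omega$; this is proved by word-length combinatorics in the Fock space --- Ioana's central-sequence lemma for amalgamated free products when $\pi$ is reducible, and Popa-style asymptotic orthogonality estimates (Step 1 in the proof of Theorem \ref{relative-AOP}) when $\pi$ is irreducible, hence weakly mixing. The gain is that a central sequence of the crossed product already commutes with $\LL(G)$, so the inner perturbations $\Ad(v_k)$ that make Out-discreteness hard never enter: once $x_k$ lies in $\LL(G)^\omega$ with scalar Fourier coefficients, commutation with finitely many $y_i \in \Gamma(H_\R)\dpr$ together with discreteness of $\sigma_\pi(G)$ in $\Aut(\Gamma(H_\R)\dpr)$ (not in $\Out$) yields $\sum_i\|y_i x_k - x_k y_i\|_2^2 \geq \kappa\|x_k - \tau(x_k)1\|_2^2$, killing property Gamma. (The paper treats $\dim H_\R < \infty$ separately: there $G$ is forced to be finite and one invokes the Pimsner--Popa finite-index argument; your route would also cover that case, since a finite subgroup of the Hausdorff group $\Out(Q)$ is automatically discrete.) So your plan is not merely incomplete at one point: the step you defer is essentially the whole theorem, and it cannot be recovered from fullness of the free group factor by a continuity estimate.
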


The proof of Theorem \ref{thmA} (see Section $\ref{gamma}$) does not actually use Jones' result but rather a combination of words techniques involving the generators $W(e)$, $e \in H_\R$, and methods from Popa's seminal article \cite{popa-amenable} on maximal amenable subalgebras in ${\rm II_1}$ factors. The key step (see Proposition \ref{commutant}) is to prove that when $\pi : G \to \mathcal O(H_\R)$ is an infinite dimensional orthogonal representation, then any central sequence of $\Gamma(H_\R)\dpr \rtimes_\pi G$ must asymptotically lie in $\LL(G)$.

When the group $G$ is {\em abelian} and $\pi : G \to \mathcal O(H_\R)$ is a faithful orthogonal representation such that $\dim H_\R \geq 2$, the sufficient condition in Theorem \ref{thmA} is also necessary, that is, $\Gamma(H_\R)\dpr \rtimes_\pi G$ is a ${\rm II_1}$ factor which does not have property Gamma if and only if $\pi(G)$ is discrete in $\mathcal O(H_\R)$ with respect to the strong topology (see Corollary \ref{equivalence-gamma}). Examples of orthogonal representations $\pi : G \to \mathcal O(H_\R)$ for which $\pi(G)$ is discrete in $\mathcal O(H_\R)$ include the ones which contain a mixing subrepresentation.

\subsection*{Regular amenable subalgebras}
Whenever $A \subset M$ is an inclusion of tracial von Neumann algebras, we denote by $\mathcal N_M(A) = \{u \in \mathcal U(M) : uAu^* = A\}$ the group of all the {\em normalizing unitaries} of $A$ inside $M$. Recall that $A \subset M$ is a {\em Cartan subalgebra} if $A \subset M$ is maximal abelian and $\mathcal N_M(A)\dpr = M$.

In their breakthrough article \cite{ozawa-popa}, Ozawa and Popa obtained a remarkable dichotomy result for {\em compact} actions of free groups. Let $\F_n \curvearrowright (X, \mu)$ be a compact probability measure-preserving (pmp) action of the free group onto $n$ generators ($n \geq 2$) on a standard probability space and put $M = \LL^\infty(X) \rtimes \F_n$. Ozawa and Popa \cite{ozawa-popa} proved that whenever $A \subset M$ is an amenable von Neumann subalgebra, then either $A \preceq_M \LL^\infty(X)$ or $\mathcal N_M(A)\dpr$ is amenable. We refer to Section \ref{preliminaries} for Popa's intertwining techniques and the symbol $\preceq_M$. In particular, any compact free ergodic pmp action $\F_n \curvearrowright (X, \mu)$ gives rise to a ${\rm II_1}$ factor $M = \LL^\infty(X) \rtimes \F_n$ with a unique Cartan decomposition, up to unitary conjugacy.

In a recent breakthrough paper \cite{popa-vaes-cartan1}, Popa and Vaes obtained a very general dichotomy result for {\em arbitrary} actions of free groups. Let $\F_n \curvearrowright (B, \tau)$ be an arbitrary trace-preserving action of $\F_n$ on a tracial von Neumann algebra $(B, \tau)$ and put $M = B \rtimes \F_n$. Popa and Vaes \cite[Theorem 1.6]{popa-vaes-cartan1} proved that whenever $A \subset M$ is a von Neumann subalgebra which is amenable relative to $B$ inside $M$, then either $A \preceq_M B$ or $\mathcal N_M(A)\dpr$ is amenable relative to $B$ inside $M$. We refer to Section \ref{preliminaries} for the notion of {\em relative amenability}. In particular, {\em any} free ergodic pmp action $\F_n \curvearrowright (X, \mu)$ gives rise to a ${\rm II_1}$ factor $M = \LL^\infty(X) \rtimes \F_n$ with a unique Cartan decomposition, up to unitary conjugacy. We refer to \cite{{ozawa-popa2}, {houdayer-shlyakhtenko}, {chifan-sinclair}, {chifan-sinclair-udrea}, {popa-vaes-cartan2}, {houdayer-vaes}} for further results in these directions.

Very recently, Ioana \cite{ioana-cartan} used a combination of Popa-Vaes' dichotomy result \cite{popa-vaes-cartan1} together with new word techniques to study Cartan subalgebras in amalgamated free product von Neumann algebras. One of the most general results Ioana obtained (see \cite[Theorem 1.6]{ioana-cartan}) is the following. Let $M = M_1 \ast_B M_2$ be an arbitrary tracial amalgamated free product. Let $A \subset M$ be a von Neumann subalgebra which is amenable relative to $B$ inside $M$ and $\omega \in \beta(\N) \setminus \N$ a free ultrafilter such that $\mathcal N_M(A)' \cap M^\omega = \C$, that is, $\mathcal N_M(A)\dpr$ has ``spectral gap" inside $M$. Then at least one of the following holds true:
\begin{itemize}
\item $A \preceq_M B$.
\item $\mathcal N_M(A)\dpr \preceq_M M_i$ for some $i \in \{1, 2\}$.
\item $\mathcal N_M(A)\dpr$ is amenable relative to $B$ inside $M$.
\end{itemize}
Very recenty, Vaes improved Ioana's dichotomy result (see \cite[Theorem A]{Va13}) by removing the spectral gap assumption $\mathcal N_M(A)' \cap M^\omega = \C$.

In this paper, we use Ioana's ideas and results from \cite{ioana-cartan} as well as Vaes' result \cite{Va13} to prove the following general dichotomy result for free Bogoljubov actions of {\em arbitrary} countable discrete groups $G$. This theorem should be compared to Popa-Vaes' result \cite[Theorem 1.6]{popa-vaes-cartan1}.

\begin{letterthm}\label{thmB}
Let $G$ be any countable discrete group and $\pi : G \to \mathcal O(H_\R)$ any orthogonal representation. Denote by $M = \Gamma(H_\R)\dpr \rtimes_\pi G$ the corresponding crossed product von Neumann algebra under the free Bogoljubov action $\sigma_\pi : G \curvearrowright \Gamma(H_\R)\dpr$. Let $p \in M$ be a nonzero projection and $A \subset pMp$ any von Neumann subalgebra that is amenable relative to $\LL(G)$ inside $M$. 

Then at least one of the following conclusions holds:
\begin{itemize}
\item $A \preceq_M \LL(G)$. 
\item $\mathcal N_{pMp}(A)\dpr$ is amenable relative to $\LL(G)$ inside $M$.
\end{itemize}
\end{letterthm}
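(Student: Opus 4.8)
The plan is to realize $M=\Gamma(H_\R)\dpr\rtimes_\pi G$ as an amalgamated free product over $\LL(G)$ so that Vaes' dichotomy result \cite{Va13} (the spectral-gap-free version of Ioana's theorem \cite[Theorem~1.6]{ioana-cartan}) applies directly. Recall that for any orthogonal representation $\pi:G\to\mathcal O(H_\R)$, the free Bogoljubov crossed product decomposes as a free product amalgamated over the group von Neumann algebra: concretely, if $H_\R=\bigoplus_{i}H_i$ is a decomposition into $G$-invariant subspaces, then $\Gamma(H_\R)\dpr\rtimes_\pi G=\ast_{\LL(G)}\bigl(\Gamma(H_i)\dpr\rtimes_\pi G\bigr)$, with each factor amalgamated over the common copy of $\LL(G)$. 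In the generic situation one simply writes $M=M_1\ast_{\LL(G)} M_2$ where $M_1=\LL(G)$ (or a first block) and $M_2$ carries the semicircular generators; more robustly, whenever $\dim H_\R\geq 1$ one has at least a nontrivial amalgamated free product structure over $B:=\LL(G)$. The key point is that $\LL(G)$ sits inside $M$ as the common amalgam, which is exactly the algebra $B$ appearing in Ioana's and Vaes' framework.

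The main steps are as follows. First I would verify the amalgamated free product decomposition $M=M_1\ast_B M_2$ with $B=\LL(G)$. This is essentially a consequence of the functoriality of the free Gaussian construction and the freeness property of the semicircular generators $W(e)$ stated in the introduction: distinct orthogonal $G$-invariant summands of $H_\R$ give $\ast$-free families, and the crossed product by $G$ respects this because $\sigma_\pi(g)$ permutes the relevant subspaces compatibly. Second, with this presentation in hand, I would simply invoke Vaes' theorem \cite[Theorem~A]{Va13}: given a von Neumann subalgebra $A\subset pMp$ that is amenable relative to $B=\LL(G)$ inside $M$, the conclusion is that either $A\preceq_M B$, or $\mathcal N_{pMp}(A)\dpr\preceq_M M_i$ for some $i$, or $\mathcal N_{pMp}(A)\dpr$ is amenable relative to $B$ inside $M$. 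Third, and this is where the specific structure of the free Bogoljubov setting must be exploited, I would rule out the middle alternative: I claim that $\mathcal N_{pMp}(A)\dpr\preceq_M M_i$ cannot produce anything new beyond the two conclusions stated in Theorem~\ref{thmB}. When $M_i=\LL(G)=B$ this is already the first bullet of the desired conclusion; when $M_i$ is a factor built from a $G$-invariant piece of $H_\R$, I would argue that intertwining the normalizer into such an $M_i$ forces it to be amenable relative to $B$ inside $M$, since each $\Gamma(H_i)\dpr\rtimes_\pi G$ is itself amenable relative to $\LL(G)$ (the free Gaussian algebra $\Gamma(H_i)\dpr$ is amenable, being a free group factor only when $\dim H_i<\infty$, so more carefully one uses that the inclusion $\LL(G)\subset\Gamma(H_i)\dpr\rtimes_\pi G$ enjoys relative amenability of the ambient algebra over $\LL(G)$ by a mixing/weak-containment argument on the orthogonal representation).

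The hard part will be handling the middle alternative cleanly, i.e.\ showing that $\mathcal N_{pMp}(A)\dpr\preceq_M M_i$ can be absorbed into ``$\mathcal N_{pMp}(A)\dpr$ is amenable relative to $\LL(G)$,'' and ensuring that the amalgamated free product decomposition is genuinely available in full generality (for arbitrary $\pi$, including the case where $H_\R$ has no nontrivial invariant decomposition, so that one of the free factors degenerates). For the degenerate case where $\pi$ is irreducible over $\R$ one must still produce a usable free-product splitting; here I would pass to the doubling/malleable-deformation picture or split off a single generator $W(e)$ to write $M$ as a nontrivial free product over $\LL(G)$, invoking that $\dim H_\R\geq 1$ suffices to guarantee $M_2\neq B$. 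The relative amenability computations—transitivity of relative amenability, and the fact that $A\preceq_M M_i$ together with $M_i$ amenable relative to $B$ yields $A$ amenable relative to $B$—are standard consequences of the theory recalled in Section~\ref{preliminaries}, so the real content is organizing the decomposition and the case analysis rather than any single delicate estimate.
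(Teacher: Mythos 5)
Your proposal has two genuine gaps, and they sit exactly where the paper's real work is. First, the splitting $M = M_1 \ast_{\LL(G)} M_2$ that you want to feed into Vaes' theorem exists only when $\pi$ is \emph{reducible}: the free factors must come from $\pi(G)$-\emph{invariant} summands of $H_\R$, since otherwise $\Gamma(H_i)\dpr$ is not globally $\sigma_\pi(G)$-invariant and there is no crossed-product-compatible amalgamated decomposition. When $\pi$ is irreducible (the case the paper explicitly singles out, right after Corollary \ref{corC}, as the one where Vaes' theorem cannot be applied directly to $M$), your fallback of ``splitting off a single generator $W(e)$'' fails for precisely this reason: $\R e$ is not $\pi(G)$-invariant, so $W(e)$ does not generate a free factor over $\LL(G)$ inside the crossed product. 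Your other fallback, ``pass to the doubling/malleable-deformation picture,'' is the paper's actual proof, but naming it is not doing it. The paper works inside $\widetilde M = \Gamma(H_\R \oplus H_\R)\dpr \rtimes_{\pi \oplus \pi} G = M \ast_{\LL(G)} M$, which exists for \emph{every} $\pi$, applies Vaes' theorem to the \emph{deformed} subalgebra $\theta_t(A) \subset \widetilde M$ with $0 < t < 1$, and then needs Theorem \ref{intertwining2} (if $\theta_t(P) \preceq_{\widetilde M} M$ for some $t \in (-1,0) \cup (0,1)$, then $P \preceq_M \LL(G)$), proved in Section \ref{intertwining} by Wick-formula word estimates, to translate conclusions about $\theta_t(A)$ inside $\widetilde M$ back into conclusions about $A$ inside $M$. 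That translation step is the technical heart of the proof and is entirely absent from your proposal.

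Second, your absorption of the middle alternative rests on a false claim. You assert that each $\Gamma(H_i)\dpr \rtimes_\pi G$ is amenable relative to $\LL(G)$ inside $M$; this fails whenever $\dim H_i \geq 2$, because then $\Gamma(H_i)\dpr \cong \LL(\F_{\dim H_i})$ is a \emph{nonamenable} free group factor (your parenthetical gets the amenability of free group factors backwards: $\LL(\F_n)$ is nonamenable for every $n \geq 2$). Indeed, the paper's proof of Corollary \ref{corC} uses exactly the opposite fact, namely that $M$ is \emph{not} amenable relative to $\LL(G)$ once $\dim H_\R \geq 2$. Consequently $\mathcal N_{pMp}(A)\dpr \preceq_M M_i$ cannot be folded into the second bullet of Theorem \ref{thmB}. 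In the paper the middle alternative is instead converted into the \emph{first} conclusion $A \preceq_M \LL(G)$, and this conversion is again exactly what the deformation buys: for $0 < t < 1$ the algebra $\theta_t(A)$ sits transversally to both free factors $M$ and $\theta_1(M)$ of $\widetilde M$, so intertwining of its normalizer into either factor forces, via Theorem \ref{intertwining2}, intertwining of $A$ into the amalgam $\LL(G)$. Without the deformation, the middle alternative is a genuine third outcome that your case analysis has no means to remove.
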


Note that Theorem \ref{thmB} generalizes the main result of \cite{houdayer-shlyakhtenko}. Indeed, a similar result was proven in \cite[Theorem 3.5]{houdayer-shlyakhtenko} under the assumption that $G$ is {\em amenable}.

The proof of Theorem \ref{thmB} uses Ioana's original strategy \cite{ioana-cartan} and Vaes' result \cite{Va13} in the following way. To simplify, assume that $A \subset M$ is an amenable von Neumann subalgebra and put $P = \mathcal N_M(A)\dpr$. Assume that $P$ is not amenable relative to $\LL(G)$ inside $M$. Our aim is to show that $A \preceq_M \LL(G)$. We use Popa's malleable deformation $(\theta_t)$ on $\Gamma(H_\R \oplus H_\R)\dpr \rtimes_{\pi \oplus \pi} G$ arising from the {\em second quantization} of the one-parameter family of rotations on $H_\R \oplus H_\R$ that continuously map $H_\R \oplus 0$ onto $0 \oplus H_\R$. The key observation is that we can regard the crossed product von Neumann algebra $\widetilde M = \Gamma(H_\R \oplus H_\R)\dpr \rtimes_{\pi \oplus \pi} G$ as the amalgamated free product
$$\left( \Gamma(H_\R)\dpr \rtimes_\pi G \right) \ast_{\LL(G)} \left( \Gamma(H_\R)\dpr \rtimes_\pi G \right),$$
where we identify $M$ with the left copy of $\Gamma(H_\R)\dpr \rtimes_\pi G$ in the amalgamated free product. For $t > 0$ small enough, we now use Vaes' dichotomy result \cite[Theorem A]{Va13} for the inclusion $\theta_t(A) \subset \widetilde M$ and obtain that necessarily $\theta_t(A) \preceq_{\widetilde M} M$. In Section \ref{intertwining}, using word techniques involving the generators $W(e)$, $e \in H_\R$, we prove that this condition implies that $A \preceq_M \LL(G)$ (see Theorem \ref{intertwining2}).

The general dichotomy result obtained in Theorem \ref{thmB} together with Theorem \ref{thmA} allows us to obtain a new class of ${\rm II_1}$ factors with no Cartan subalgebra. 

\begin{lettercor}\label{corC}
Let $G$ be any countable discrete group and $\pi : G \to \mathcal O(H_\R)$ any faithful orthogonal representation such that $\dim H_\R \geq 2$. If $A \subset M$ is a regular amenable subalgebra then $A \preceq_M \LL(G)$.

Moreover, the following statements hold true:
\begin{enumerate}
\item If $\pi$ contains a direct sum of at least two finite dimensional subrepresentations, then $M$ has no Cartan subalgebra.
\item  If $\pi$ contains a mixing subrepresentation, then $M$ has no diffuse amenable regular von Neumann subalgebra.
\end{enumerate}
\end{lettercor}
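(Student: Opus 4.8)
The plan is to derive everything from Theorem \ref{thmB} together with the nonamenability of the free Gaussian part. For the first assertion, let $A \subseteq M$ be regular and amenable. Since an amenable von Neumann algebra is amenable relative to any subalgebra, $A$ is amenable relative to $\LL(G)$ inside $M$, so Theorem \ref{thmB} applied with $p = 1$ yields that either $A \preceq_M \LL(G)$ or $\mathcal N_M(A)\dpr$ is amenable relative to $\LL(G)$ inside $M$; by regularity $\mathcal N_M(A)\dpr = M$, so the second alternative reads ``$M$ is amenable relative to $\LL(G)$ inside $M$''. I would rule this out as follows: if $M$ were amenable relative to $\LL(G)$, then so would be its subfactor $\Gamma(H_\R)\dpr$; but $\Gamma(H_\R)\dpr$ is trace-orthogonal to $\LL(G)$, meaning that $E_{\LL(G)}(x) = \tau(x)\, 1$ for every $x \in \Gamma(H_\R)\dpr$, so relative amenability collapses to genuine amenability of $\Gamma(H_\R)\dpr \cong \LL(\F_{\dim H_\R})$, which contradicts $\dim H_\R \geq 2$. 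Hence $A \preceq_M \LL(G)$, which is the main assertion.

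For statement (2), suppose towards a contradiction that $A \subseteq M$ is diffuse, amenable and regular. By the main assertion $A \preceq_M \LL(G)$. Here I would use a mixing subrepresentation of $\pi$ exactly as in \cite{houdayer-shlyakhtenko}: the semicircular words built from its subspace provide the mixing/malnormality behaviour of $\LL(G)$ inside $M$ that drives Popa's normalizer-control argument, upgrading $A \preceq_M \LL(G)$ to $\mathcal N_M(A)\dpr \preceq_M \LL(G)$. By regularity this reads $M \preceq_M \LL(G)$, which is impossible: the Haar unitaries of the diffuse, trace-orthogonal subfactor $\Gamma(H_\R)\dpr$ witness $\Gamma(H_\R)\dpr \not\preceq_M \LL(G)$, whence $M \not\preceq_M \LL(G)$, a contradiction. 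Therefore $M$ has no diffuse amenable regular von Neumann subalgebra.

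For statement (1), the two finite dimensional subrepresentations single out orthogonal, $G$-invariant subspaces $H_1, H_2 \subseteq H_\R$ with $H_1^{\perp} \neq 0$; since the free Gaussian functor turns orthogonal direct sums into free products and this is compatible with the crossed product, I obtain a genuine amalgamated free product decomposition $M = M_1 \ast_{\LL(G)} M_2$ with $M_i = \Gamma(H_i)\dpr \rtimes_{\pi_i} G \supsetneq \LL(G)$. Now suppose $A \subseteq M$ is a Cartan subalgebra; then $A$ is amenable and regular, so the main assertion gives $A \preceq_M \LL(G)$. I would then invoke Ioana's analysis of Cartan subalgebras in amalgamated free products \cite{ioana-cartan} together with Vaes' dichotomy \cite{Va13}: one checks, as in the first paragraph, that $M$ is not amenable relative to $\LL(G)$, and that $M \preceq_M M_i$ fails for an amalgamated free product, and then one upgrades the intertwining of the regular \emph{maximal abelian} subalgebra $A$ into the amalgam $\LL(G)$ to a statement forcing $\mathcal N_M(A)\dpr = M$ to be controlled by $\LL(G)$; since $\LL(G)$ is not regular in $M$, this is absurd. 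The finite dimensionality of $H_1, H_2$ is precisely what makes Ioana's word-length estimates applicable, and this last upgrade — passing from $A \preceq_M \LL(G)$ for a \emph{Cartan} $A$ to an outright contradiction — is the main obstacle and the technical heart of the argument.
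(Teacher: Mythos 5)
Your proof of the first assertion is correct and is exactly the paper's: Theorem \ref{thmB} applied to a regular amenable $A$, together with the observation that if $M$ were amenable relative to $\LL(G)$ then $\Gamma(H_\R)\dpr \cong \LL(\F_{\dim H_\R})$ would be amenable (the paper states this in one line; your justification via the crossed-product position of $\Gamma(H_\R)\dpr$ and $\LL(G)$ is the standard way to verify it). The two numbered statements, however, both have genuine gaps.

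For (2), the step you lean on --- upgrading $A \preceq_M \LL(G)$ to $\mathcal N_M(A)\dpr \preceq_M \LL(G)$ via the ``mixing/malnormality behaviour of $\LL(G)$ inside $M$'' --- is not available under the hypothesis. The inclusion $\LL(G) \subset M$ is mixing only when the \emph{whole} representation $\pi$ is mixing; here $\pi$ merely contains a mixing subrepresentation, and if $\pi$ also has, say, a trivial or compact part, then $\LL(G)' \cap M$ contains a diffuse fixed-point algebra, so $\LL(G)$ is neither mixing nor malnormal in $M$ and Popa's normalizer control applied to $\LL(G)$ breaks down. This is precisely why the paper introduces $N = \Gamma(H_\R \ominus K_\R)\dpr \rtimes_{\pi_{H \ominus K}} G$: the inclusion $N \subset M$ \emph{is} mixing (Proposition \ref{mixing-inclusion}), so from $A \preceq_M \LL(G) \subset N$ and regularity one gets $M \preceq_M N$ (Corollary \ref{mixing-consequence}) --- and even this is not yet absurd. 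The paper still needs an endgame: a finite-index corner $Np \subset pMp$ with $p \in \mathcal Z(N)$, quasi-regularity together with $\QN_{pMp}(Np)\dpr = Np$ (Corollary \ref{weak-mixing-consequence}) forcing $Np = pMp$, and then the pairwise orthogonal projections $u^k p u^{-k}$, $k \in \Z$, built from a Haar unitary $u \in \Gamma(K_\R)\dpr$, to conclude $p = 0$. Your final observation ($M \npreceq_M \LL(G)$) is correct, but you never validly arrive at $M \preceq_M \LL(G)$; your route only works in the special case where $\pi$ itself is mixing.

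For (1), you explicitly leave open the decisive step --- you call it ``the main obstacle and the technical heart'' --- namely how to derive a contradiction from the coexistence of a Cartan subalgebra $A$ with $A \preceq_M \LL(G)$. Running the amalgamated free product dichotomies of \cite{ioana-cartan} or \cite{Va13} again cannot do this: $A \preceq_M \LL(G)$ is itself one of their conclusions, so ruling out the other branches leaves you exactly where you started. Your guess that the finite dimensionality of the two subrepresentations ``is precisely what makes Ioana's word-length estimates applicable'' also misidentifies the role of the hypothesis, since those AFP results require no such assumption. The paper uses finite dimensionality quite differently: each free Bogoljubov action $G \curvearrowright \Gamma(H_\R^{(i)})\dpr$, $i = 1, 2$, extends to an action of a compact group, so by \cite[Corollary 4.2]{HLS} the fixed-point algebras are diffuse; hence $\LL(G)' \cap M$ contains a free product of two diffuse von Neumann algebras and therefore has no amenable direct summand. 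Then \cite[Lemma 3.5]{vaes-bimodules} converts $A \preceq_M \LL(G)$ into $\LL(G)' \cap M \preceq_M A' \cap M = A$, which is impossible because $A$ is amenable. This commutant-intertwining mechanism is what your sketch is missing.
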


Observe that in case when the orthogonal representation $\pi : G \to \mathcal O(H_\R)$ is {\em reducible}, the first part of Corollary \ref{corC} can be directly deduced from Vaes' result (see \cite[Theorem A]{Va13}). Indeed, if $\pi = \pi_1 \oplus \pi_2$ and $H_\R = H_\R^{(1)} \oplus H_\R^{(2)}$, then the crossed product ${\rm II_1}$ factor $\Gamma(H_\R)\dpr \rtimes_\pi G$ can be regarded as the amalgamated free product 
$$\left( \Gamma(H_\R^{(1)})\dpr \rtimes_{\pi_1} G \right) \ast_{\LL(G)} \left( \Gamma(H_\R^{(2)})\dpr \rtimes_{\pi_2} G \right)$$
and so Vaes' result \cite{Va13} can be applied. However, when $\pi : G \to \mathcal O(H_\R)$ is {\em irreducible}, the ${\rm II_1}$ factor $\Gamma(H_\R)\dpr \rtimes_\pi G$ no longer splits as an amalgamated free product over $\LL(G)$ and so in that case, Corollary \ref{corC} cannot be deduced from Vaes' result.

\subsection*{Maximal amenable and maximal Gamma extensions}

In his seminal article \cite{popa-amenable}, Popa proved that the generator masa in a free group factor is maximal amenable. In fact, Popa showed \cite[Lemma 2.1]{popa-amenable} that the generator masa in a free group factor satisfies the {\em asymptotic orthogonality property} (see Section $\ref{AOP}$ for further details). He then used this property to deduce that the generator masa is maximal amenable inside the free group factor (see \cite[Corollary 3.3]{popa-amenable}).

In the recent paper \cite{houdayer13}, we gave many new examples of maximal amenable masas in ${\rm II_1}$ factors by proving that whenever $G$ is an abelian group and $\pi : G \to \mathcal O(H_\R)$ is a mixing orthogonal representation, then $\LL(G)$ is maximal amenable inside $\Gamma(H_\R)\dpr \rtimes_\pi G$. This was done by showing that the inclusion $\LL(G) \subset \Gamma(H_\R)\dpr \rtimes_\pi G$ satisfies the asymptotic orthogonality property (see \cite[Theorem 3.2]{houdayer13}).

Very recently, Jesse Peterson asked us whether the maximal amenability of $\LL(G)$ inside the ${\rm II_1}$ factor $\Gamma(H_\R)\dpr \rtimes_\pi G$ could hold true under the more general assumption that $\pi : G \to \mathcal O(H_\R)$ is {\em weakly mixing}. We give a positive answer to his question and furthermore we prove the following theorem which generalizes the main result of \cite{houdayer13} and gives a new class of maximal amenable subalgebras in ${\rm II_1}$ factors. We will say that an orthogonal representation $\pi : G \to \mathcal O(H_\R)$ is {\em compact} if $\pi$ is a direct sum of finite dimensional orthogonal representations.

\begin{letterthm}\label{thmD}
Let $G$ be any amenable countable discrete group and $\pi : G \to \mathcal O(H_\R)$ any faithful orthogonal representation. Denote by $K_\R \subset H_\R$ the unique closed $\pi(G)$-invariant subspace such that $\pi_K = \pi | K_\R$ is weakly mixing and $\pi_{H \ominus K} = \pi | H_\R \ominus K_\R$ is compact. Put $M = \Gamma(H_\R)\dpr \rtimes_\pi G$ and $N = \Gamma(H_\R \ominus K_\R)\dpr \rtimes_{\pi_{H \ominus K}}G$.

Then for any  intermediate amenable von Neumann subalgebra $\LL(G) \subset P \subset M$, we have $P \subset N$.

In particular, if $\pi$ is weakly mixing, then $\LL(G)$ is maximal amenable inside $M$.
\end{letterthm}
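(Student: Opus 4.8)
The plan is to follow Popa's strategy for maximal amenability through the \emph{asymptotic orthogonality property} of \cite{popa-amenable}, relativized to the compact part $N$ and adapted to the (possibly non-abelian) amenable subalgebra $\LL(G)$. I would first record the amalgamated free product picture underlying the statement. Writing $L_\R = H_\R \ominus K_\R$ and using that $K_\R$ and $L_\R$ are both $\pi(G)$-invariant, the freeness of $\Gamma(K_\R)\dpr$ and $\Gamma(L_\R)\dpr$ inside $\Gamma(H_\R)\dpr$ is $G$-equivariant, so that
\[
M = \left( \Gamma(K_\R)\dpr \rtimes_{\pi_K} G \right) \ast_{\LL(G)} \left( \Gamma(L_\R)\dpr \rtimes_{\pi_{H \ominus K}} G \right),
\]
with $N = \Gamma(L_\R)\dpr \rtimes_{\pi_{H \ominus K}} G$ the right-hand free factor and $M_K = \Gamma(K_\R)\dpr \rtimes_{\pi_K} G$ the weakly mixing one. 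In these terms the goal becomes: every amenable intermediate subalgebra $P$ with $\LL(G) \subset P \subset M$ is trapped inside the free factor $N$.

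The technical heart, which I expect to be the main obstacle, is to prove that the inclusion $N \subset M$ satisfies an asymptotic orthogonality property: for all $x, y \in M \ominus N$ one has $x a \perp b y$ in $L^2(M^\omega)$ for all nontrivial $\LL(G)$-central sequences $a, b \in \LL(G)' \cap M^\omega$. This should be viewed as the weakly mixing, amenable generalization of \cite[Theorem 3.2]{houdayer13}. I would establish it by expanding $x$ and $y$ into reduced words in the amalgamated free product and reducing to words carrying at least one letter from $\Gamma(K_\R)\dpr \ominus \C$, that is, a genuine weakly mixing semicircular syllable $W(e)$ with $e \in K_\R$. The defining feature of weak mixing of $\pi_K$ --- that for any finite $F \subset K_\R$ there exist $g \in G$ making all $\langle \pi_K(g)\xi, \eta\rangle$ ($\xi, \eta \in F$) arbitrarily small --- is exactly what makes the inner products of the translated words $W(\pi_K(g)e)$ vanish along central directions: an $\LL(G)$-central sequence must asymptotically smear over $G$, and weak mixing prevents any resonance between $x$ and $y$. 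The amenability of $G$ enters here, letting one model the relevant $\LL(G)$-central sequences through F{\o}lner sets of $G$ and run the weak mixing trace estimates letter by letter.

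For the deduction, let $\LL(G) \subset P \subset M$ with $P$ amenable. Amenability of $P$ produces a $P$-central state $\Omega$ on the basic construction $\langle M, e_{\LL(G)} \rangle$ with $\Omega|_M = \tau$, equivalently a net of almost $P$-central, asymptotically trace-like unit vectors in $L^2(M) \otimes_{\LL(G)} L^2(M)$. Given $p \in P$, I would decompose $p = E_N(p) + \left( p - E_N(p) \right)$ with $p - E_N(p) \in M \ominus N$, and feed this element into the asymptotic orthogonality property: the property forces the central state $\Omega$ not to see any nonzero $M \ominus N$ component, so that $p = E_N(p) \in N$, whence $P \subset N$. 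Here one must work with the conditional expectation $E_N$ rather than with the masa relation $\LL(G)' \cap M = \LL(G)$ of Popa's original argument, since $\LL(G)$ is no longer abelian; this is the point where amenability of $G$ --- hence of $\LL(G)$ and of the intermediate $P$ --- is indispensable. Finally, when $\pi$ is weakly mixing one has $K_\R = H_\R$, $L_\R = 0$ and $N = \LL(G)$, so the conclusion reads $P \subset \LL(G)$; since $\LL(G)$ is itself amenable, it is maximal amenable in $M$.
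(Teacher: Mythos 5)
Your overall skeleton (a relative asymptotic orthogonality property for $N \subset M$, then a Popa-style deduction for amenable intermediate subalgebras) is the same as the paper's, but two of your three main steps contain genuine errors or gaps. First, your formulation of the AOP is false as stated: you quantify over ``nontrivial $\LL(G)$-central sequences $a, b \in \LL(G)' \cap M^\omega$'', whereas the property can only hold for central sequences that are in addition orthogonal to $N^\omega$, i.e.\ elements of $(M^\omega \ominus N^\omega) \cap \LL(G)'$, which is exactly how the paper states it (Theorem \ref{relative-AOP}). Indeed, take $G = \Z$ with $\pi$ weakly mixing (so $N = \LL(G)$), let $a = b = u_1$ (a non-scalar unitary of $\LL(\Z)$, hence a legitimate ``nontrivial $\LL(G)$-central sequence'' in your sense) and $x = y = W(e)$ with $e \in H_\R$: then $\langle x a, b y\rangle = \tau\bigl(W(e)\, u_1^* W(e) u_1\bigr) = \langle \pi(-1)e, e\rangle$, which is nonzero in general. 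Second, amenability of $G$ plays no role in the proof of the AOP, contrary to your claim that one should ``model the central sequences through F\o lner sets'': the paper proves Theorem \ref{relative-AOP} for an \emph{arbitrary} countable group $G$, using weak mixing of $\pi_K$ to produce finitely many group elements $g_1, \dots, g_{2^\ell}$ making the translates $\pi(g_i)L$ of a finite dimensional subspace $L \subset K$ pairwise $\varepsilon$-orthogonal, and then Popa's $\varepsilon$-orthogonality counting (Proposition \ref{projections}) together with the asymptotic commutation of the central sequence with the unitaries $u_{g_i}$. Amenability of $G$ enters only to make $\LL(G)$, and hence the hypothesis ``$P$ amenable with $\LL(G) \subset P$'', non-vacuous.

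The more serious gap is your deduction of $P \subset N$. One cannot ``feed'' the single element $p - E_N(p)$ into the AOP, which is a statement about central sequences, and the $P$-central state $\Omega$ on $\langle M, e_{\LL(G)}\rangle$ never interacts with the AOP in the way you assert. The paper's actual deduction (Theorem \ref{general-result}) runs as follows: one first uses weak mixing of $N \subset M$ through $\LL(G)$ (Proposition \ref{weak-mixing-through} and Corollary \ref{weak-mixing-consequence}) to get $P' \cap M \subset N$, takes the maximal projection $z \in \mathcal Z(P' \cap N)$ with $Pz \subset zNz$, and sets $Q = Pz^\perp$; then (i) Connes' theorem gives $Q = \bigvee_k Q_k$ with $Q_k$ finite dimensional and $Q_k' \cap Q \subset Q$ of \emph{finite index}; (ii) if $Q \npreceq_M N$, one chooses unitaries $u_k \in \mathcal U(Q_k' \cap Q)$ with $\|E_N(u_k)\|_2 \to 0$, so that $u = (u_k)$ lies in $(M^\omega \ominus N^\omega) \cap P'$ --- this is precisely where the hypothesis ``orthogonal to $N^\omega$'' of the AOP gets verified, and it is the step your central-state formulation of amenability cannot replace; (iii) the AOP then yields $\|E_N(y) u - u E_N(y)\|_2^2 = 2\|y - E_N(y)\|_2^2$ for $y \in Q$, contradicting the choice of the $u_k$; (iv) the resulting statement $Q \preceq_M N$ is only an intertwining, and it must still be upgraded to an honest inclusion $Qvv^* \subset vv^* N vv^*$ using weak mixing through $\LL(G)$ again, contradicting the maximality of $z$. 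Steps (i)--(iv) --- in particular the hyperfiniteness argument producing the right central sequence, and the passage from $\preceq_M$ to containment --- constitute the real content of the deduction and are absent from your proposal.
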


As we will see in Section \ref{maximal-extensions}, Theorem \ref{thmD} will be deduced from a very general result regarding the {\em relative asymptotic orthogonality property} of the inclusion $N \subset M$ (see Theorem \ref{relative-AOP}).

Observe that the ${\rm II_1}$ factor $\Gamma(H_\R)\dpr \rtimes_\pi G$ may have property Gamma when $\pi$ is weakly mixing. This phenomenon cannot happen when $\pi$ is mixing by Theorem A. More generally, our last result below shows that when the group $G$ is amenable and the orthogonal representation $\pi : G \to \mathcal O(H_\R)$ contains a mixing subrepresentation, one can locate precisely not only the amenable extensions of $\LL(G)$ inside $\Gamma(H_\R)\dpr \rtimes_\pi G$ but also the Gamma extensions of $\LL(G)$ inside $\Gamma(H_\R)\dpr \rtimes_\pi G$, that is, the intermediate von Neumann subalgebras $\LL(G) \subset P \subset \Gamma(H_\R)\dpr \rtimes_\pi G$ which have property Gamma.

\begin{letterthm}\label{thmE}
Let $G$ be any amenable countable discrete group and $\pi : G \to \mathcal O(H_\R)$ any faithful orthogonal representation. Let $K_\R$ be a nonzero closed $\pi(G)$-invariant subspace such that $\pi | K_\R$ is mixing. Put $\pi_{H \ominus K} = \pi | H_\R \ominus K_\R$, $M = \Gamma(H_\R)\dpr \rtimes_\pi G$ and $N = \Gamma(H_\R \ominus K_\R)\dpr \rtimes_{\pi_{H \ominus K}}G$.

Then for any  intermediate von Neumann subalgebra $\LL(G) \subset P \subset M$ which has property Gamma, we have $P \subset N$.

In particular, if $N$ has property Gamma, then $N$ is the unique maximal Gamma extension of $\LL(G)$ inside $M$.
\end{letterthm}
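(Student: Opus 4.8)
The plan is to derive Theorem \ref{thmE} from the relative asymptotic orthogonality property of the inclusion $N \subset M$ (Theorem \ref{relative-AOP}), exactly as Theorem \ref{thmD} is derived, but feeding that machinery the central sequence produced by property Gamma in place of the one coming from amenability (hyperfiniteness). Throughout I would fix a free ultrafilter $\omega \in \beta(\N) \setminus \N$ and write $M^\omega$ for the tracial ultrapower, $\tau_\omega$ for its trace, and $E_{N^\omega} : M^\omega \to N^\omega$ for the trace-preserving conditional expectation; since $\LL(G) \subset N$, the expectation $E_{N^\omega}$ is $\LL(G)$-bimodular. I would also exploit the amalgamated free product decomposition $M = (\Gamma(K_\R)\dpr \rtimes G) \ast_{\LL(G)} N$ arising from $\Gamma(H_\R)\dpr = \Gamma(K_\R)\dpr \ast \Gamma(H_\R \ominus K_\R)\dpr$.

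The first step is to reduce to a statement about $M \ominus N$. Let $\LL(G) \subset P \subset M$ with $P$ having property Gamma. By definition there is a sequence of unitaries $u_k \in \mathcal U(P)$ with $\tau(u_k) = 0$ and $\|u_k y - y u_k\|_2 \to 0$ for every $y \in P$, so passing to the ultrapower produces a unitary $u = (u_k)_\omega \in P' \cap M^\omega$ with $\tau_\omega(u) = 0$; since $\LL(G) \subset P$, in particular $u \in \LL(G)' \cap M^\omega$. To prove $P \subset N$ it suffices to show that every $x \in P$ satisfies $x_0 := x - E_N(x) = 0$. Note that $x_0 \in M \ominus N$ and, because $u$ commutes with $x$, one has the identity $u x_0 - x_0 u = E_N(x)\, u - u\, E_N(x)$.

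The two inputs from the mixing hypothesis, both packaged in Theorem \ref{relative-AOP}, are as follows. Using that $\pi | K_\R$ is mixing, a conjugation argument first shows $\LL(G)' \cap M^\omega \subset N^\omega$, so in fact $u \in N^\omega$. Granting this, I would apply $E_{N^\omega}$ to $u x = x u$ and use $N^\omega$-bimodularity together with $E_{N^\omega}|_M = E_N$ to obtain $u\, E_N(x) = E_N(x)\, u$; the displayed identity then collapses to $u x_0 = x_0 u$. Thus $u \in N^\omega$ becomes a traceless central unitary commuting with $x_0 \in M \ominus N$, and the relative asymptotic orthogonality property — through the amalgamated free product picture and the mixing of $\pi | K_\R$ — forces $\|x_0\|_2 = 0$. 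This establishes $x \in N$ for every $x \in P$, that is, $P \subset N$. The ``in particular'' statement is then immediate: if $N$ itself has property Gamma then $N$ is a Gamma extension of $\LL(G)$, and since the first part shows every intermediate Gamma extension is contained in $N$, the algebra $N$ is the largest such extension, hence the unique maximal Gamma extension of $\LL(G)$ inside $M$.

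I expect the main obstacle to be the second input above: turning the commutation $u x_0 = x_0 u$, with $u$ a traceless $N^\omega$-central unitary and $x_0 \in M \ominus N$, into the conclusion $x_0 = 0$. This is precisely where the mixing of $\pi | K_\R$ — rather than the merely weakly mixing hypothesis of Theorem \ref{thmD} — is required, since property Gamma supplies only a central sequence rather than the richer almost-central structure furnished by amenability, and one must control its interaction with all reduced words over $\LL(G)$ having letters from $\Gamma(K_\R)\dpr$. Carefully separating the $N^\omega$-component of the central sequence from the $E_N(x)$-component in the bookkeeping, and verifying that the asymptotic orthogonality estimate upgrades cleanly from the weakly mixing setting of Theorem \ref{relative-AOP}, is the delicate part of the argument.
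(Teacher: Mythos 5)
Both ``inputs from the mixing hypothesis'' that your plan rests on are unavailable, and each failure is fatal. First, the claim that $\LL(G)' \cap M^\omega \subset N^\omega$ is false. Mixing of $\pi | K_\R$ gives the \emph{bounded} relative commutant statement $\LL(G)' \cap M \subset N$ (Corollary \ref{weak-mixing-consequence}), and Proposition \ref{commutant} controls $M' \cap M^\omega$, but nothing controls $\LL(G)' \cap M^\omega$. Concretely, in the exact setting of Theorem \ref{thmE} take $G = \Z$, $\pi = \lambda_\Z$ and $K_\R = H_\R = \ell^2_\R(\Z)$, so that $N = \LL(\Z)$ sits inside $M \cong \LL(\Z) \ast \LL(\Z) \cong \LL(\F_2)$. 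Put $f_n = (2n+1)^{-1/2} \sum_{|k| \leq n} \pi(k) e_0$. The semicircular elements $T_n = W(f_n)$ satisfy $\|T_n\|_\infty = 2$, $E_N(T_n) = 0$, $\|T_n\|_2 = 1$ and $\|u_g T_n u_g^* - T_n\|_2 = \|\pi(g) f_n - f_n\| \to 0$ for every $g \in \Z$, so $(T_n)$ is a nonzero element of $(M^\omega \ominus N^\omega) \cap \LL(G)'$. (If your inclusion held, the space $(M^\omega \ominus N^\omega) \cap \LL(G)'$ occurring in Theorem \ref{relative-AOP} would be zero and the AOP would be vacuous; Popa's method exists precisely because this space is large.) Second, even granting $u \in N^\omega$, Theorem \ref{relative-AOP} cannot ``force $x_0 = 0$'': it is a statement about elements of $(M^\omega \ominus N^\omega) \cap \LL(G)'$, i.e.\ central sequences \emph{orthogonal} to $N^\omega$, tested against $a, b \in M \ominus N$; it says nothing about a traceless unitary inside $N^\omega$ commuting with $x_0 \in M \ominus N$, and that configuration does not imply $x_0 = 0$. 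In the same example, take a projection $p \in \LL(\Z)$ of trace $\tfrac12$ and set $u = 2p - 1$, $x_0 = p\left(W(e_0)^2 - 1\right)p$. Then $u$ is a traceless unitary in $N$ commuting with $\LL(G)$ and with $x_0$ (indeed with the whole algebra generated by $\LL(G)$ and $x_0$), $E_N(x_0) = 0$, and a freeness computation gives $\|x_0\|_2^2 = \tfrac14 \tau\left((W(e_0)^2 - 1)^2\right) > 0$. So no appeal to AOP, however the bookkeeping is arranged, can close your argument.

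The real difficulty, which your proposal does not engage, is that property Gamma gives no control on the position of the central sequence relative to $N^\omega$; in Theorem \ref{general-result} (behind Theorem \ref{thmD}) it is hyperfiniteness that is used to \emph{manufacture} unitary central sequences nearly orthogonal to $N$, which is exactly what the AOP can consume. The paper's proof of Theorem \ref{thmE} therefore has a different shape, and it uses the amenability of $G$ essentially, a hypothesis your proof never invokes. One first splits off the maximal projection $z$ with $Pz$ amenable and handles it by applying Theorem \ref{thmD} to $Pz \oplus \LL(G)z^\perp$ (this needs $\LL(G)$ amenable). On the remaining corner $q$ one runs a dichotomy on the central sequence algebra $(Pq)_\omega$: if $(Pq)_\omega \preceq_{M^\omega} \LL(G)^\omega$, then mixing of $N \subset M$ (Proposition \ref{mixing-inclusion}) and \cite[Lemma 9.5]{ioana-cartan} give $Pq \preceq_M N$; if $(Pq)_\omega \npreceq_{M^\omega} \LL(G)^\omega$, then relative non-amenability plus amenability of $\LL(G)$ yield spectral gap, so $(\theta_t^\omega)$ converges uniformly on $\mathcal U((Pq)_\omega)$ by Theorem \ref{uniform-spectral-gap}, and a Peterson-style averaging over unitaries of $(Pq)_\omega$ whose $\LL(G)$-Fourier coefficients vanish asymptotically transfers this to uniform convergence on $(Pq)_1$, whence $Pq \preceq_M \LL(G)$ by Theorem \ref{intertwining1}. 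In both cases $Pq \preceq_M N$, and the endgame of Theorem \ref{general-result} then contradicts the maximality of $z_0$. In short: AOP enters only through Theorem \ref{thmD}, on the amenable part; the Gamma part is carried by deformation/rigidity, not by AOP, and that is the machinery your proposal would have to rebuild.
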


\subsection*{Acknowledgments}

The present work was initiated when the author was staying at the Banff International Research Station for the ``Set Theory and Functional Analysis" workshop held in June 2012. He thanks the organizers for their invitation. 

The author is very grateful to Jesse Peterson for asking him whether \cite[Theorem 3.2]{houdayer13} could hold true under a weak mixing assumption. He is also very grateful to Stefaan Vaes for useful discussions which motivated the further generalizations stated above as Theorems \ref{thmD} and \ref{thmE}.

The author thanks Adrian Ioana for explaining \cite{ioana-cartan} to him and for useful discussions as well as R\'emi Boutonnet and Stefaan Vaes for their valuable comments. Finally, he thanks the referee for carefully reading the paper and providing useful remarks.

\subsection*{Notations}

All the groups $G$ that we consider in this paper are always assumed to be countable and discrete and the real Hilbert spaces $H_\R$ are always assumed to be separable. A {\em tracial} von Neumann algebra $(M, \tau)$ is a von Neumann algebra $M$ endowed with a faithful normal tracial state $\tau$. The uniform norm will be denoted by $\|x\|_\infty$ for all $x \in M$ while the $\LL^2$-norm associated with $\tau$ will be denoted by $\|x\|_2 = \tau(x^*x)^{1/2}$ for all $x \in M$. The unit ball of $M$ with respect to the uniform norm will be denoted by $(M)_1$.

\tableofcontents

\section{Preliminaries}\label{preliminaries}

\subsection{An elementary fact on $\varepsilon$-orthogonality}\label{elementary}

\begin{df}
Let $\mathcal H$ be a complex Hilbert space and $\varepsilon \geq 0$. We say that two (not necessarily closed) subspaces $\mathcal K, \mathcal L \subset \mathcal H$ are $\varepsilon$-{\em orthogonal} and we denote by $\mathcal K \perp_\varepsilon \mathcal L$ if
$$|\langle \xi, \eta \rangle_{\mathcal H} | \leq \varepsilon \, \|\xi\|_{\mathcal H} \, \|\eta\|_{\mathcal H}, \; \forall \xi \in \mathcal K, \forall \eta \in \mathcal L.$$
\end{df}

Define the function
$$\delta : \left[0, \frac12 \right) \to \R_+ :  t \mapsto \frac{2 t}{\sqrt{1 - t - \sqrt{2} \, t \sqrt{1 - t}}}.$$ 
We will be using the following elementary fact regarding $\varepsilon$-orthogonality whose proof  can be found in \cite[Proposition 2.3]{houdayer13}.

\begin{prop}[\cite{houdayer13}]\label{projections}
Let $k \geq 1$. Let $0 \leq \varepsilon < 1$ such that $\delta^{\circ (k - 1)}(\varepsilon) < 1/2$. For all $1 \leq i \leq 2^k$, let $p_i \in \mathbf B(\mathcal H)$ be projections such that $p_i \mathcal H \perp_\varepsilon p_j \mathcal H$ for all $i, j \in \{1, \dots , 2^k\}$ such that $i \neq j$. Write $P_k = \bigvee_{i = 1}^{2^k} p_i$. Then for all $\xi \in \mathcal H$, we have
$$\sum_{i = 1}^{2^k} \|p_i \xi\|_{\mathcal H}^2 \leq \prod_{j = 0}^{k - 1} \left( 1 + \delta^{\circ j}(\varepsilon) \right)^2 \|P_k \xi\|_{\mathcal H}^2.$$
\end{prop}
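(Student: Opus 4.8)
The plan is to argue by induction on $k$, halving the number of projections at each step by passing to pairwise joins, so that the $k$ factors $(1 + \delta^{\circ j}(\varepsilon))^2$ appear one per level. Throughout I may assume $\xi \in P_k \mathcal H$: since $p_i \leq P_k$ we have $p_i \xi = p_i P_k \xi$, so replacing $\xi$ by $P_k \xi$ alters neither side. The base case $k = 0$ (a single projection) is the trivial identity $\|p_1 \xi\|^2 = \|P_0 \xi\|^2$ with empty product equal to $1$; the case $k=1$ below will recover the two-projection estimate.

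For the inductive step I would group the $2^k$ projections into $2^{k-1}$ pairs and set $q_i = p_{2i-1} \vee p_{2i}$. Two ingredients are needed. The first is a two-projection $\LL^2$-estimate: if $p \mathcal H \perp_\varepsilon q \mathcal H$ then $\|p\zeta\|^2 + \|q\zeta\|^2 \leq (1 + \varepsilon)^2 \|(p \vee q)\zeta\|^2$ for every $\zeta$. I would deduce this from $\|p\zeta\|^2 + \|q\zeta\|^2 = \langle (p + q)\zeta, \zeta \rangle$, the classical identity $\|p + q\| = 1 + \|pq\|$, and the remark that $\|pq\| \leq \varepsilon$ is exactly the quantitative content of $p\mathcal H \perp_\varepsilon q\mathcal H$; after replacing $\zeta$ by $(p \vee q)\zeta$ this gives the claimed bound (even with exponent $1$, a fortiori with exponent $2$). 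Applied to each pair with $p \vee q = q_i$ it yields $\sum_i \left( \|p_{2i-1}\xi\|^2 + \|p_{2i}\xi\|^2 \right) \leq (1 + \varepsilon)^2 \sum_i \|q_i \xi\|^2$.

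The second ingredient, and the genuine core of the argument, is that the joins inherit a controlled but degraded orthogonality: $q_i \mathcal H \perp_{\delta(\varepsilon)} q_j \mathcal H$ for $i \neq j$. Here I would fix $\xi \in q_i \mathcal H$ and $\eta \in q_j \mathcal H$; since $\varepsilon < 1$ the within-pair $\varepsilon$-orthogonality makes the algebraic sums defining $q_i, q_j$ closed, so one has unique decompositions $\xi = \xi_1 + \xi_2$ and $\eta = \eta_1 + \eta_2$ along the two ranges in each pair. Expanding and using the four cross $\varepsilon$-orthogonalities from the hypothesis gives $|\langle \xi, \eta \rangle| \leq \sum_{a,b} |\langle \xi_a, \eta_b \rangle| \leq \varepsilon (\|\xi_1\| + \|\xi_2\|)(\|\eta_1\| + \|\eta_2\|)$, while the within-pair orthogonality yields the lower bound $\|\xi\|^2 \geq \|\xi_1\|^2 + \|\xi_2\|^2 - 2\varepsilon \|\xi_1\|\|\xi_2\| \geq (1 - \varepsilon)(\|\xi_1\|^2 + \|\xi_2\|^2)$, whence $\|\xi_1\| + \|\xi_2\| \leq \sqrt{2/(1-\varepsilon)}\,\|\xi\|$, and symmetrically for $\eta$. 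Combining produces $|\langle \xi, \eta \rangle| \leq \tfrac{2\varepsilon}{1-\varepsilon}\, \|\xi\|\|\eta\|$. I expect the main obstacle to lie precisely in this estimate and in matching the stated function: one checks that on $[0, 1/2)$ one has $\tfrac{2\varepsilon}{1-\varepsilon} \leq \delta(\varepsilon)$, equivalently $1 - \varepsilon - \sqrt 2\,\varepsilon\sqrt{1-\varepsilon} \leq (1-\varepsilon)^2$, and that the denominator $1 - \varepsilon - \sqrt 2\,\varepsilon\sqrt{1-\varepsilon} = \sqrt{1-\varepsilon}\,(\sqrt{1-\varepsilon} - \sqrt 2\,\varepsilon)$ is positive exactly for $\varepsilon < 1/2$, which explains the domain of $\delta$. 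Thus $q_i \mathcal H \perp_{\delta(\varepsilon)} q_j \mathcal H$.

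Finally I would close the induction. Since $\bigvee_i q_i = P_k$ and the $2^{k-1}$ projections $q_i$ are $\delta(\varepsilon)$-orthogonal, the induction hypothesis applies: it is legitimate because $\delta^{\circ(k-2)}(\delta(\varepsilon)) = \delta^{\circ(k-1)}(\varepsilon) < 1/2$, and because $\delta(t) > t$ (the denominator is $< 1$) forces every earlier iterate into $[0, 1/2)$, so each $\delta^{\circ j}(\varepsilon)$ lies in the domain of $\delta$ and each intermediate two-projection estimate is valid. This gives
\[
\sum_i \|q_i \xi\|^2 \leq \prod_{j=0}^{k-2} \bigl(1 + \delta^{\circ j}(\delta(\varepsilon))\bigr)^2 \|P_k \xi\|^2 = \prod_{j=1}^{k-1} \bigl(1 + \delta^{\circ j}(\varepsilon)\bigr)^2 \|P_k \xi\|^2,
\]
using $\delta^{\circ j}(\delta(\varepsilon)) = \delta^{\circ(j+1)}(\varepsilon)$. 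Combining with the two-projection estimate and absorbing the factor $(1+\varepsilon)^2 = (1 + \delta^{\circ 0}(\varepsilon))^2$ into the product yields $\sum_{i=1}^{2^k} \|p_i \xi\|^2 \leq \prod_{j=0}^{k-1} (1 + \delta^{\circ j}(\varepsilon))^2 \|P_k \xi\|^2$, as required. All the real difficulty is concentrated in the join estimate and the bookkeeping of the iterate $\delta^{\circ j}$; the surrounding recursion is a routine halving.
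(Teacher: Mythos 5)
Your proof is correct. One contextual point: the present paper contains no proof of Proposition \ref{projections} at all --- it is quoted from \cite[Proposition 2.3]{houdayer13} --- so the comparison can only be with that reference, which proceeds by the same induction-on-$k$ pairing scheme you use: a two-projection estimate at each level (via $\|pq\| \leq \varepsilon$ and $\|p+q\| \leq 1 + \|pq\|$), plus a lemma degrading the orthogonality constant of the pairwise joins, fed back into the induction. The one substantive difference is quantitative: your join lemma produces the constant $2\varepsilon/(1-\varepsilon)$ rather than the function $\delta(\varepsilon)$ of the statement, and you close this gap correctly, both by verifying $2\varepsilon/(1-\varepsilon) \leq \delta(\varepsilon)$ on $[0,1/2)$ (your factorization $1-\varepsilon-\sqrt{2}\,\varepsilon\sqrt{1-\varepsilon} = \sqrt{1-\varepsilon}\,\bigl(\sqrt{1-\varepsilon}-\sqrt{2}\,\varepsilon\bigr)$, which also explains the domain of $\delta$, is right) and by noting that $\perp_c$ is a weaker condition for larger $c$, so the induction hypothesis can be invoked with parameter exactly $\delta(\varepsilon)$ and the iterates line up with the stated product. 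The supporting details are sound as well: the closedness of the algebraic sum $p_{2i-1}\mathcal H + p_{2i}\mathcal H$ (needed for the decompositions $\xi = \xi_1+\xi_2$) follows from your own lower bound $\|\xi_1+\xi_2\|^2 \geq (1-\varepsilon)\bigl(\|\xi_1\|^2+\|\xi_2\|^2\bigr)$, which makes approximating sequences Cauchy componentwise; and since $\delta(t) \geq 2t \geq t$, the sequence $j \mapsto \delta^{\circ j}(\varepsilon)$ is non-decreasing, so the single hypothesis $\delta^{\circ (k-1)}(\varepsilon) < 1/2$ does indeed keep every intermediate iterate inside the domain of $\delta$, as you claim.
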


\subsection{Popa's intertwining techniques}\label{intertwining-techniques}

Let $Q \subset (M, \tau)$ be an inclusion of tracial von Neumann algebras. Jones' {\em basic construction} $\langle M, e_Q\rangle$ is the von Neumann subalgebra of $\mathbf B(\LL^2(M))$ generated by $M$ and the orthogonal projection $e_Q : \LL^2(M) \to \LL^2(Q)$. Recall that if we denote by $\rho : Q^{\op} \to \mathbf B(\LL^2(M))$ the right $Q$-action on $\LL^2(M)$, we have $\langle M, e_Q\rangle = \mathbf B(\LL^2(M)) \cap \rho(Q^{\op})'$. The basic construction $\langle M, e_Q\rangle$ is endowed with a canonical semifinite faithful normal trace $\Tr$ which satisfies
$$\Tr(x e_Q y) = \tau(xy), \forall x, y \in M.$$

In \cite{popa-malleable1, Po01}, Popa discovered the following powerful method to unitarily conjugate subalgebras of a tracial von Neumann algebra. Let $(M, \tau)$ be a tracial von Neumann algebra and $P\subset 1_P M 1_P$, $Q \subset 1_Q M 1_Q$ von Neumann subalgebras. By \cite[Corollary 2.3]{popa-malleable1} and \cite[Theorem A.1]{Po01} (see also \cite[Proposition C.1]{vaes-bourbaki}), the following conditions are equivalent:

\begin{itemize}
\item There exist $n \geq 1$, a projection $q \in \mathbf M_n(Q)$, a nonzero partial isometry $v \in \mathbf M_{1, n}(1_P M)q$ and a unital normal $\ast$-homomorphism $\varphi : P \to q\mathbf M_n(Q)q$  such that $a v = v \varphi(a)$ for all $a \in P$.

\item There exist projections $p \in P$ and $q \in Q$, a nonzero partial isometry $v \in pMq$ and a unital normal $\ast$-homomorphism $\varphi : pPp \to qQq$ such that $a v = v \varphi(a)$ for all $a \in P$.

\item There is no net of unitaries $(w_k)$ in $P$ such that
$$\lim_k \|E_Q(x^* w_k y)\|_2 = 0, \forall x, y \in 1_P M 1_Q.$$
\end{itemize}

If one of the previous equivalent conditions is satisfied, we say that $A$ {\it embeds into} $B$ {\it inside} $M$ and write $A \preceq_M B$.

Following \cite{{jones-index},{pimsner-popa}}, we say that an inclusion of tracial von Neumann algebras $Q \subset (M, \tau)$ has {\em finite index} if $\LL^2(M, \tau)$ has finite dimension as a right $Q$-module. 

\begin{rem}\label{remark-intertwining}
Let $(M, \tau)$ be a tracial von Neumann algebra and $P \subset 1_P M 1_P$ and $Q \subset 1_Q M 1_Q$ von Neumann subalgebras. If $A \subset P$ is a von Neumann subalgebra with finite index and if $A \preceq_M Q$, then $P \preceq_M Q$ (see \cite[Lemma 3.9]{vaes-bimodules}).
\end{rem}

\subsection{Hilbert bimodules}

Let $(M, \tau)$ and $(N, \tau)$ be any tracial von Neumann algebras. Recall that an $M$-$N$-{\em bimodule} $\mathcal H$ is a Hilbert space endowed with two commuting normal $\ast$-representations $\pi : M \to \mathbf B(\mathcal H)$ and $\rho : N^{\op} \to \mathbf B(\mathcal H)$. We then define $\pi_{\mathcal H} : M \otimes_{\alg} N^{\op} \to \mathbf B(\mathcal H)$ by $\pi_{\mathcal H}(x \otimes y^{\op}) = \pi(x) \rho(y^{\op})$ for all $x \in M$ and all $y \in N$. We will simply write $x \xi y = \pi_{\mathcal H}(x \otimes y^{\op}) \xi$ for all $x \in M$, all $y \in N$ and all $\xi \in \mathcal H$. The $N$-$N$-bimodule $\LL^2(N)$ with left and right action given by $x \xi y = x J y^*J \xi$ is the {\em trivial} $N$-$N$-bimodule while the $N$-$N$-bimodule $\LL^2(N) \otimes \LL^2(N)$ with left and right action given by $x (\xi \otimes \eta) y = x \xi \otimes J y^* J \eta$ is the {\em coarse} $N$-$N$-bimodule.

Let $\mathcal H$ and $\mathcal K$ be $M$-$N$-bimodules. Following \cite[Appendix V.B]{Co94}, we say that $\mathcal K$ is {\em weakly contained} in $\mathcal H$ and write $\mathcal K \subset_{\weak} \mathcal H$ if $\|\pi_{\mathcal K}(T) \|_\infty \leq \|\pi_{\mathcal H}(T)\|_\infty$ for all $T \in M \otimes_{\alg} N^{\op}$.

For any tracial von Neumann algebras $(B, \tau)$, $(M, \tau)$, $(N, \tau)$, any $M$-$B$-bimodule $\mathcal H$ and any $B$-$N$-bimodule $\mathcal K$, there is a well defined $M$-$N$-bimodule $\mathcal H \otimes_B \mathcal K$ called the {\em Connes' fusion tensor product} of $\mathcal H$ and $\mathcal K$ over $B$. We refer to \cite[Appendix V.B]{Co94} and \cite[Section 1]{AD93} for more details regarding this construction.

\subsection{Relative amenability}

Whenever $P \subset \mathcal N$ is an inclusion of von Neumann algebras, a positive functional $\varphi$ on $\mathcal N$ is $P$-{\em central} if $\varphi(x T) = \varphi(T x)$ for all $T \in \mathcal N$ and all $x \in P$.

Recall from \cite{connes76} that a tracial von Neumann algebra $(P, \tau)$ is {\em amenable} if there exists a $P$-central state $\varphi$ on $\mathbf B(\LL^2(P))$ such that $\varphi | P = \tau | P$. By Connes' celebrated result \cite{connes76}, a tracial von Neumann algebra $P$ with separable predual is amenable if and only if it is hyperfinite. 

\begin{df}[\cite{ozawa-popa}]
Let $(M, \tau)$ be a tracial von Neumann algebra, $p \in M$ a nonzero projection and $P \subset pMp$, $Q \subset M$ von Neumann subalgebras. We say that $P$ is {\em amenable relative to} $Q$ {\em inside} $M$ if there exists a $P$-central positive functional $\varphi$ on $p \langle M, e_Q \rangle p$ such that $\varphi | pMp =  \tau | pMp$.
\end{df}

By \cite[Theorem 2.1]{ozawa-popa}, $P$ is amenable relative to $Q$ inside $M$ if and only if there exists a net of vectors $\xi_k \in \LL^2(p\langle M, e_Q\rangle p, \Tr)$ such that $\lim_k \|y \xi_k  - \xi_k y\|_{2, \Tr} = 0$ for all $y \in P$ and $\lim_k \langle x \xi_k, \xi_k\rangle_{\Tr} = \tau(x)$ for all $x \in pMp$. This is equivalent to the fact the $pMp$-$P$-bimodule $p \LL^2(M) p$ is weakly contained in the $pMp$-$P$-bimodule $p \LL^2(M) \otimes_Q \overline{\LL^2(M)}p$.

\begin{rem}\label{remark}
We will be using the following facts. Let $(M, \tau)$ be a tracial von Neumann algebra and $P \subset pMp$ a von Neumann subalgebra.
\begin{enumerate}
\item If $P$ is amenable relative to $Q$ inside $M$ and if $A \subset eMe$ is a von Neumann subalgebra which satisfies $A \preceq_M P$, then there exists a nonzero projection $f \in A' \cap eMe$ such that $Af$ is amenable relative to $Q$ inside $M$ (see \cite[Section 2.4]{ipv}).

\item If $P$ is amenable relative to $Q$ inside $M$, and $e \in P$, $f \in P' \cap pMp$ are projections, then $e P ef$ is amenable relative to $Q$ inside $M$.

\item If $P p_1$ is amenable relative to $Q$ inside $M$ for some nonzero projection $p_1 \in P' \cap pMp$, then $P p_2$ is amenable relative to $Q$ inside $M$ with $p_2 \in \mathcal Z(P' \cap pMp)$ the central support of $p_1$ inside $P' \cap pMp$ (see \cite[Remark 2.2]{ioana-cartan}).
\end{enumerate}
\end{rem}

\subsection{Voiculescu's free Gaussian functor}

Let $H_\R$ be a separable real Hilbert space. Let $H = H_\R \otimes_\R \C = H_\R \oplus {\rm i} H_\R$ be the corresponding complexified Hilbert space. The canonical complex conjugation on $H$ will be simply denoted by $\overline{e + {\rm i} f} = e - {\rm i} f$ for all $e, f \in H_\R$. The \emph{full Fock space} of $H$ is defined by
\begin{equation*}
\mathcal{F}(H) =\C\Omega \oplus \bigoplus_{n \geq 1} H^{\otimes n}.
\end{equation*}
The unit vector $\Omega$ is called the \emph{vacuum vector}. For all $e \in H$, we define the \emph{left creation operator}
$$
\ell(e) : \mathcal{F}(H) \to \mathcal{F}(H) : \left\{ 
{\begin{array}{l} \ell(e)\Omega = e \\ 
\ell(e)(e_1 \otimes \cdots \otimes e_n) = e \otimes e_1 \otimes \cdots \otimes e_n.
\end{array}} \right.
$$
We have $\ell(e)^* \ell(f) = \langle e, f\rangle$ for all $e, h \in H$. In particular, $\ell(e)$ is an isometry for all unit vector $e \in H$.

For all $e \in H_\R$, put $W(e) = \ell(e) + \ell(e)^*$. Voiculescu's result \cite[Lemma 2.6.3]{voiculescu92} shows that the distribution of the selfadjoint operator $W(e)$ with respect to the vacuum vector state $\langle \cdot \Omega, \Omega\rangle$ is the semicircular law supported on the interval $[-2 \| e \|, 2 \| e \|]$. Moreover, \cite[Lemma 2.6.6]{voiculescu92} shows that for every subset $\Xi \subset H_\R$ of pairwise orthogonal vectors, the family $(W(e))_{e \in \Xi}$ is freely independent with respect to $\langle \cdot \Omega, \Omega\rangle$.

We denote by $\Gamma(H_\R)$ the C$^*$-algebra generated by $\{W(e) : e \in H_\R\}$ and by $\Gamma(H_\R)\dpr$ the von Neumann algebra generated by $\Gamma(H_\R)$. The vector state $\tau = \langle \cdot \Omega, \Omega\rangle$ is a faithful normal trace on $\Gamma(H_\R)\dpr$ and $\Gamma(H_\R)\dpr$ is $\ast$-isomorphic to the free group factor on $\dim H_\R$ generators, that is, $\Gamma(H_\R)\dpr \cong \LL(\F_{\dim H_\R})$.

Since the vacuum vector $\Omega$ is separating and cyclic for $\Gamma(H_\R)\dpr$, any
$x \in \Gamma(H_\R)\dpr$ is uniquely determined by $\xi = x \Omega \in \mathcal{F}(H)$. Thus we will write $x = W(\xi)$. Note that for $e \in H_\R$, we recover the semicircular random variables $W(e) = \ell(e) + \ell(e)^*$ generating $\Gamma(H_\R)\dpr$. More generally we have $W(e) = \ell(e) + \ell(\overline e)^*$ for all $e \in H$. Given any vectors $e_i \in H$, it is easy to check that $e_1\otimes \cdots \otimes e_n$ lies in $\Gamma(H_\R)\dpr \Omega$. The corresponding {\em words} $W(e_1 \otimes \cdots \otimes e_n) \in \Gamma(H_\R)\dpr$ enjoy useful properties that are summarized in the following result.

\begin{prop}[\cite{houdayer13}]\label{wick}
Let $e_i , f_j \in H$, for $i, j \geq 1$. The following are true:
\begin{enumerate}
\item We have the {\em Wick formula}:
$$W(e_1 \otimes \cdots \otimes e_n) = \sum_{k = 0}^n \ell(e_1) \cdots \ell(e_k) \ell(\overline e_{k + 1})^* \cdots \ell(\overline e_n)^*.$$
\item  We have that $W(e_1 \otimes \cdots \otimes e_r) W(f_1 \otimes \cdots \otimes f_s)$ is equal to
$$
W(e_1 \otimes \cdots \otimes e_r \otimes f_1 \otimes \cdots \otimes f_s) + 
\langle \overline e_r, f_1\rangle W(e_1 \otimes \cdots \otimes e_{r - 1}) W(f_2 \otimes \cdots \otimes f_s)
$$
\item We have $W(e_1 \otimes \cdots \otimes e_n)^* = W(\overline e_n \otimes \cdots \otimes \overline e_1)$.
\item The linear span of $\{1, W(e_1 \otimes \cdots \otimes e_n) : n \geq 1, e_i \in H \}$ forms a unital weakly dense $\ast$-subalgebra of $\Gamma(H_\R)\dpr$.
\end{enumerate}
\end{prop}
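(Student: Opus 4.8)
The plan is to establish the four assertions in order, with the Wick formula (1) as the crux and (2)--(4) following by short formal arguments. Throughout I will use two facts stated above: that $\Omega$ is separating for $\Gamma(H_\R)\dpr$, so any identity between operators already known to lie in $\Gamma(H_\R)\dpr$ may be verified by testing against $\Omega$; and the creation/annihilation relations $\ell(g)^*\ell(h) = \langle g, h\rangle$ and $\ell(g)^* \Omega = 0$.

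The first task is to make sense of the words, i.e. to show $e_1 \otimes \cdots \otimes e_n \in \Gamma(H_\R)\dpr\Omega$ so that $W(e_1 \otimes \cdots \otimes e_n)$ is a well-defined element of $\Gamma(H_\R)\dpr$. Writing $e = a + {\rm i}b$ with $a, b \in H_\R$ one checks $W(e) = W(a) + {\rm i}\,W(b)$, so $W$ is complex-linear from $H$ into $\Gamma(H_\R)\dpr$ and each length-one word $W(e)$, $e \in H$, already lies in $\Gamma(H_\R)\dpr$. I would then induct on $n$: applying $W(e_1) = \ell(e_1) + \ell(\overline e_1)^*$ to the vector $e_2 \otimes \cdots \otimes e_n$ yields $e_1 \otimes \cdots \otimes e_n + \langle \overline e_1, e_2 \rangle\, e_3 \otimes \cdots \otimes e_n$, and by the inductive hypothesis the second summand lies in $\Gamma(H_\R)\dpr\Omega$, hence so does $e_1 \otimes \cdots \otimes e_n$. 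Since all operators involved belong to $\Gamma(H_\R)\dpr$ and $\Omega$ is separating, this simultaneously yields the fundamental one-step recursion
$$W(e_1)\, W(e_2 \otimes \cdots \otimes e_n) = W(e_1 \otimes \cdots \otimes e_n) + \langle \overline e_1, e_2 \rangle\, W(e_3 \otimes \cdots \otimes e_n),$$
which is the $r = 1$ instance of (2).

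With this recursion in hand, I would prove the Wick formula (1) by induction on $n$, substituting the length-$(n-1)$ and length-$(n-2)$ Wick expansions into the right-hand side of the recursion and simplifying. The only term arising at the junction is $\ell(\overline e_1)^*\ell(e_2) = \langle \overline e_1, e_2\rangle$, and it is exactly cancelled by the correction term, leaving precisely $\sum_{k=0}^n \ell(e_1)\cdots\ell(e_k)\ell(\overline e_{k+1})^* \cdots \ell(\overline e_n)^*$. I expect the bookkeeping in this inductive step to be the main (and essentially the only) technical obstacle: once the single contraction at the junction is isolated, the expression collapses. The general product formula (2) then follows by induction on $r$: peel off the first letter of the left word via the $r=1$ recursion, apply the inductive hypothesis to the two resulting shorter products, and reassemble, the unique surviving contraction being between $\overline e_r$ and $f_1$.

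Finally, (3) is immediate from (1): taking the adjoint of each summand reverses the string and interchanges creations with annihilations, and after reindexing $k \mapsto n - k$ and using $\overline{\overline e} = e$ one recovers exactly the Wick expansion of $W(\overline e_n \otimes \cdots \otimes \overline e_1)$. For (4), let $\mathcal A$ denote the linear span in the statement. It is unital and contains every $W(e)$ with $e \in H_\R$; by (2) it is closed under multiplication, since a product of two words equals a longer word plus a scalar multiple of a product of two strictly shorter words, so closure follows by induction on the total length; and by (3) it is closed under the adjoint. Thus $\mathcal A$ is a unital $\ast$-subalgebra of $\Gamma(H_\R)\dpr$ containing the generating set $\{W(e) : e \in H_\R\}$, and is therefore weakly dense by the very definition of $\Gamma(H_\R)\dpr$.
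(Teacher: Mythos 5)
Your proposal is correct, and its core coincides with the paper's own proof: the paper likewise establishes the one-step recursion $W(e_0\otimes e_1\otimes\cdots\otimes e_n) = W(e_0)W(e_1\otimes\cdots\otimes e_n) - \langle\overline e_0,e_1\rangle W(e_2\otimes\cdots\otimes e_n)$ by applying $W(e_0)=\ell(e_0)+\ell(\overline e_0)^*$ to the vector $e_1\otimes\cdots\otimes e_n$ and using that $\Omega$ is separating, and then proves the Wick formula $(1)$ by induction exactly as you describe, with the single junction contraction $\ell(\overline e_0)^*\ell(e_1)=\langle\overline e_0,e_1\rangle$ cancelling against the correction term; parts $(3)$ and $(4)$ are handled the same way in both arguments. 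The one genuine divergence is part $(2)$: the paper deduces it from the already-proved Wick formula, expanding both factors into creation/annihilation strings, applying the single contraction $\ell(\overline e_r)^*\ell(f_1)=\langle\overline e_r,f_1\rangle$, and regrouping the resulting double sum into the two displayed terms, whereas you prove it by induction on the length of the left word using only the one-step recursion. Both are valid; your route makes $(2)$ logically independent of $(1)$ and avoids the double-sum bookkeeping (at the cost of tracking the edge cases for short words), while the paper's is a one-shot computation in which the unique surviving contraction is immediately visible.
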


\begin{proof}
The proof of $(1)$ is borrowed from \cite[Lemma 3.2]{houdayer-ricard}. We prove the formula by induction on $n$. For $n \in \{0, 1\}$, we have $W(\Omega)=1$ and we already observed that $W(e_i)=\ell(e_i) + \ell(\overline e_i)^*$.

Next, for $e_{0}\in H$, we have 
\begin{align*}
W(e_{0})W(e_1\otimes \cdots \otimes e_n)\Omega & = W(e_{0})(e_1\otimes \cdots \otimes e_n)
\\ 
& = (\ell(e_{0})+\ell(\overline e_{0})^*)  e_1\otimes \cdots \otimes e_n \\ 
& = e_{0}\otimes e_1\otimes \cdots \otimes e_n + \langle \overline e_{0},e_{1}\rangle \, e_2\otimes \cdots \otimes e_n.
\end{align*}
So, we obtain 
\begin{align*}
W(e_0\otimes \cdots \otimes e_n) & = W(e_{0})W(e_1\otimes \cdots \otimes e_n) - \langle \overline e_{0},e_{1}\rangle W(e_2\otimes \cdots \otimes e_n) \\
& = \ell(\overline e_{0})^*W(e_1\otimes \cdots \otimes e_n) - \langle \overline e_{0},e_{1}\rangle W(e_2\otimes \cdots \otimes e_n) \\
& \ \ \ + \ell(e_{0}) W(e_1\otimes \cdots \otimes e_n).
\end{align*}
Using the assumption for $n$ and $n-1$ and the relation $\ell(\overline e_0)^*\ell(e_1) = \langle \overline e_0, e_1\rangle$, we obtain
$$\ell(\overline e_{0})^*W(e_1\otimes \cdots \otimes e_n)=\langle \overline
e_{0},e_{1}\rangle W(e_2\otimes \cdots \otimes e_n) + \ell(\overline
e_{0})^*\ell(\overline e_{1})^* \cdots \ell(\overline e_{n})^*.$$ 
Since $\ell(e_{0})W(e_1\otimes \cdots \otimes e_n)$ gives the last $n + 1$ terms in
the Wick formula at order $n+1$ and $\ell(\overline e_{0})^*\ell(\overline e_{1})^* \cdots \ell(\overline e_{n})^*$ gives the first term, we are done.

$(2)$ By the Wick formula, we have that $W(e_1 \otimes \cdots \otimes e_r) W(f_1 \otimes \cdots \otimes f_s)$ is equal to 
$$\sum_{0 \leq j \leq r, 
0 \leq k \leq s} \ell(e_1) \cdots \ell(e_j) \ell(\overline e_{j + 1})^* \cdots \ell(\overline e_r)^* \ell(f_1) \cdots \ell(f_k) \ell(\overline f_{k + 1})^* \cdots \ell(\overline f_s)^*.
$$
Recall that we have $\ell(\overline e_r)^*\ell(f_1) = \langle \overline e_r, f_1 \rangle$. Therefore the above sum simply equals
\begin{align*}
&  \left( \sum_{0 \leq j \leq r - 1} \ell(e_1) \cdots \ell(e_j) \ell(\overline e_{j + 1})^*\cdots \ell(\overline e_r)^* \ell(\overline f_1)^* \cdots \ell(\overline f_s)^* \right. \\
& \left. + \sum_{0 \leq k \leq s} \ell(e_1) \cdots \ell(e_r) \ell(f_1) \cdots \ell(f_k) \ell(\overline f_{k + 1})^* \cdots \ell(\overline f_s)^* \right) \\
&+ \langle \overline e_r, f_1\rangle \sum_{0 \leq j \leq r - 1,  1 \leq k \leq s} 
\ell(e_1) \cdots \ell(e_j) \ell(\overline e_{j + 1})^* \cdots \ell(\overline e_{r - 1})^* \ell(f_2) \cdots \ell(f_k) \ell(\overline f_{k + 1})^* \cdots \ell(\overline f_s)^*.
\end{align*}
Therefore $W(e_1 \otimes \cdots \otimes e_r) W(f_1 \otimes \cdots \otimes f_s)$ is equal to 
$$
W(e_1 \otimes \cdots \otimes e_r \otimes f_1 \otimes \cdots \otimes f_s) + 
\langle \overline e_r, f_1\rangle W(e_1 \otimes \cdots \otimes e_{r - 1}) W(f_2 \otimes \cdots \otimes f_s).
$$

$(3)$ This is a straightforward consequence of $(1)$. 

$(4)$ This is a straightforward consequence of $(3)$ using an induction procedure.
\end{proof}

Let $G$ be any countable discrete group and $\pi : G \to \mathcal{O}(H_\R)$ any orthogonal representation.  We shall still denote by $\pi : G \to \mathcal{U}(H)$ the corresponding unitary representation on the complexified Hilbert space $H = H_\R \otimes_\R \C$. The {\it free Bogoljubov action} $\sigma_\pi : G \curvearrowright (\Gamma(H_\R)\dpr, \tau)$ associated with the orthogonal representation $\pi$ is defined by
$$
\sigma_\pi(g) = \Ad(\rho(g)), \forall g \in G,
$$
where $\rho(g) = \id_{\C \Omega} \oplus \bigoplus_{n \geq 1} \pi(g)^{\otimes n} \in \mathcal{U}(\mathcal{F}(H))$. We will also sometimes more generally write $\mathcal F(U) = \id_{\C \Omega} \oplus \bigoplus_{n \geq 1} U^{\otimes n}$ for all $U \in \mathcal U(H)$. Observe that we have
$$\sigma_\pi(g)(W(e_1 \otimes \cdots \otimes e_n)) = W(\pi(g) e_1 \otimes \cdots \otimes \pi(g)e_n)$$
for all $n \geq 1$ and all $e_i \in H$.

\begin{example}
If $\lambda_G : G \to \mathcal O(\ell_\R^2(G))$ is the left regular orthogonal representation of $G$, then the action $\sigma_{\lambda_G} : G \curvearrowright \Gamma(\ell^2_\R(G))\dpr$ is the free Bernoulli shift and in that case we have 
$$\left( \LL(G) \subset \Gamma(\ell^2_\R(G))\dpr \rtimes_{\lambda_G} G \right)  \cong \left( \LL(G) \subset \LL(\Z) \ast \LL(G) \right).$$
\end{example}

Recall that an orthogonal representation $\pi : G \to \mathcal O(H_\R)$ is {\em mixing} if $\lim_{g \to \infty} \langle \pi(g)\xi, \eta \rangle = 0$ for all $\xi, \eta \in H_\R$.

\begin{prop}[\cite{houdayer13}]\label{mixing-action}
Let $G$ be any countable discrete group and $\pi : G \to \mathcal{O}(H_\R)$ any orthogonal representation. The following are equivalent:
\begin{enumerate}
\item The representation $\pi : G \to \mathcal{O}(H_\R)$ is mixing.
\item The $\tau$-preserving action $\sigma_\pi : G \curvearrowright \Gamma(H_\R)\dpr$ is mixing, that is, 
\begin{equation*}
\lim_{g \to \infty} \tau(\sigma_\pi(g)(x) y) = 0, \forall x, y \in \Gamma(H_\R)\dpr \ominus \C.
\end{equation*}
\end{enumerate}
\end{prop}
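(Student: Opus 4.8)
The plan is to transport everything to the full Fock space. Identifying $\LL^2(\Gamma(H_\R)\dpr, \tau)$ with $\mathcal F(H)$ through the GNS map $x \mapsto x\Omega$, I would first record that the unitary implementing $\sigma_\pi(g)$ on $\LL^2(\Gamma(H_\R)\dpr)$ is nothing but $\rho(g) = \mathcal F(\pi(g))$: since $\rho(g)\Omega = \Omega$, we have $\sigma_\pi(g)(x)\Omega = \rho(g) x \rho(g)^* \Omega = \rho(g)(x\Omega)$ for every $x \in \Gamma(H_\R)\dpr$. Using the trace property, this yields the basic identity
$$\tau(\sigma_\pi(g)(x) y) = \langle \sigma_\pi(g)(x)\Omega, y^*\Omega\rangle = \langle \mathcal F(\pi(g))(x\Omega), y^*\Omega\rangle, \quad \forall x, y \in \Gamma(H_\R)\dpr.$$

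Next I would observe that $x \in \Gamma(H_\R)\dpr \ominus \C$ means precisely $x\Omega \in \mathcal F(H)\ominus \C\Omega = \bigoplus_{n \geq 1} H^{\otimes n}$, and likewise $y \in \Gamma(H_\R)\dpr \ominus \C$ means $y^*\Omega \in \bigoplus_{n \geq 1} H^{\otimes n}$. Since the GNS map has dense range and $\mathcal F(\pi(g))$ is a unitary, hence norm-preserving so that finite-rank truncations are controlled uniformly in $g$, condition $(2)$ is equivalent to saying that the representation $g \mapsto \mathcal F(\pi(g))$ restricted to $\bigoplus_{n \geq 1} H^{\otimes n}$, that is the representation $\bigoplus_{n \geq 1} \pi^{\otimes n}$ on the complexified space, is mixing in the sense that $\lim_{g \to \infty} \langle \mathcal F(\pi(g))\xi, \zeta\rangle = 0$ for all $\xi, \zeta \in \bigoplus_{n \geq 1} H^{\otimes n}$.

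It then remains to prove the purely representation-theoretic equivalence: $\pi$ is mixing if and only if $\bigoplus_{n \geq 1} \pi^{\otimes n}$ is mixing. The implication from the latter to the former is immediate by restricting to the component $n = 1$, recalling that mixing of $\pi$ on $H_\R$ is equivalent to mixing of its complexification on $H = H_\R \oplus {\rm i} H_\R$. For the converse, I would first check that each tensor power $\pi^{\otimes n}$ is mixing: on elementary tensors one has
$$\langle \pi(g)^{\otimes n}(\xi_1 \otimes \cdots \otimes \xi_n), \zeta_1 \otimes \cdots \otimes \zeta_n\rangle = \prod_{i = 1}^n \langle \pi(g)\xi_i, \zeta_i\rangle,$$
and since each factor is bounded by $\|\xi_i\|\,\|\zeta_i\|$ while every factor tends to $0$, the product tends to $0$; a density argument, again uniform in $g$ by unitarity of $\pi(g)^{\otimes n}$, extends this to all of $H^{\otimes n}$. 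Finally, a countable direct sum of mixing representations is mixing, by the standard splitting of $\langle \bigoplus_{n} \pi(g)^{\otimes n}\xi, \zeta\rangle$ into a finite head that tends to $0$ and a tail made uniformly small via Cauchy--Schwarz.

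I do not expect a genuine obstacle here; the only points requiring care are the two approximation steps, from elementary tensors to general vectors of $H^{\otimes n}$ and from finitely supported vectors to general vectors of $\bigoplus_{n} H^{\otimes n}$, both of which are handled uniformly in $g$ precisely because the operators $\mathcal F(\pi(g))$ and $\pi(g)^{\otimes n}$ are contractive. The conceptual heart of the matter is simply that the Koopman representation of the free Bogoljubov action on $\LL^2(\Gamma(H_\R)\dpr) \ominus \C$ is the Fock-space representation $\bigoplus_{n \geq 1} \pi^{\otimes n}$, so that mixing of $\sigma_\pi$ is equivalent to mixing of all tensor powers of $\pi$, which in turn holds as soon as $\pi$ itself is mixing.
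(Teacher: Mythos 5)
Your proof is correct. Note that the paper itself gives no argument for this proposition, quoting it from \cite{houdayer13}; the proof there is in substance the same as yours, namely one checks mixing on the dense span of words $W(e_1 \otimes \cdots \otimes e_n)$ using the Wick formula, where the trace pairing $\tau\bigl(\sigma_\pi(g)(W(e_1 \otimes \cdots \otimes e_r))\,W(f_1 \otimes \cdots \otimes f_s)\bigr)$ collapses to $\delta_{r=s}\prod_i \langle \pi(g) e_i, \overline{f}_{s-i+1}\rangle$ --- exactly your elementary-tensor computation, phrased on operators rather than on the Fock space. Your formulation via the Koopman representation $\bigoplus_{n \geq 1} \pi^{\otimes n}$, with the two density-plus-uniform-boundedness reductions made explicit, is a clean and complete rendering of that argument.
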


Finally, recall from \cite[Theorem 5.1]{houdayer-shlyakhtenko} that whenever the orthogonal representation $\pi : G \to \mathcal O(H_\R)$ is faithful, the associated free Bogoljubov action $\sigma_\pi : G \curvearrowright \Gamma(H_\R)\dpr$ is {\em properly outer}, that is, $\sigma_\pi(g) \notin \Inn(\Gamma(H_\R)\dpr)$ for all $g \in G \setminus \{1\}$. In that case, we have
$$\Gamma(H_\R)' \cap (\Gamma(H_\R)\dpr \rtimes_\pi G) = \Gamma(H_\R)' \cap \Gamma(H_\R)\dpr = \C$$
and so $\Gamma(H_\R)\dpr \rtimes_\pi G$ is a ${\rm II_1}$ factor.

\subsection{The malleable deformation on $\Gamma(H_\R)\dpr \rtimes_\pi G$}\label{deformation}

Let $G$ be any countable discrete group and $\pi : G \to \mathcal{O}(H_\R)$ any orthogonal representation. Put
\begin{itemize}
\item $M = \Gamma(H_\R)\dpr \rtimes_\pi G$.
\item $\widetilde{M} = \Gamma(H_\R \oplus H_\R)\dpr \rtimes_{\pi \oplus \pi} G$.
\end{itemize}
We can regard $\widetilde{M}$ as the amalgamated free product
\begin{equation*}
\widetilde{M} = \left( \Gamma(H_\R)\dpr \rtimes_\pi G \right) \ast_{\LL(G)} \left( \Gamma(H_\R)\dpr \rtimes_\pi G \right),
\end{equation*}
where we identify $M$ with the left copy of $\Gamma(H_\R)\dpr \rtimes_\pi G$ inside the amalgamated free product. Consider the following orthogonal transformations on $H_\R \oplus H_\R$:
$$
V  =  
\begin{pmatrix}
1 & 0 \\
0 & -1
\end{pmatrix}  \; \mbox{ and } \;
U_t  =  
\begin{pmatrix}
\cos(\frac{\pi}{2} t) & -\sin(\frac{\pi}{2} t) \\
\sin(\frac{\pi}{2} t) & \cos(\frac{\pi}{2} t)
\end{pmatrix}, \forall t \in \R.
$$

Define the associated deformation $(\theta_t, \beta)$ on $\Gamma(H_\R \oplus H_\R)''$ by 
\begin{equation*}
\theta_t = \Ad(\mathcal F (U_t)) \; \mbox{ and } \; \beta = \Ad(\mathcal{F}(V)). 
\end{equation*}
Since $U_t$ and $V$ commute with $\pi \oplus \pi$, it follows that $\alpha_t$ and $\beta$ commute with the diagonal action $\sigma_\pi \ast \sigma_\pi$. We can then extend the deformation $(\theta_t, \beta)$ to $\widetilde{M}$ after defining $\theta_t | \LL(G) = \beta | \LL(G) = \id$. Moreover it is easy to check that the deformation $(\theta_t, \beta)$ is {\it malleable} in the sense of Popa:
\begin{enumerate}
\item $\lim_{t \to 0} \|x - \theta_t(x)\|_2 = 0$, $\forall x \in \widetilde{M}$.
\item $\beta^2 = \id$ and $\theta_t \beta = \beta \theta_{-t}$, $\forall t \in \R$.
\end{enumerate}
Since $\theta_t , \beta \in \Aut(\widetilde M)$ are trace-preserving, we will also denote by $\theta_t, \beta \in \mathcal U(\LL^2(\widetilde M))$ the corresponding Koopman unitary operators.

For all $0 < \rho \leq 1$, denote by ${\rm m}_{\rho} : M \to M$ the trace-preserving unital completely positive multiplier which satisfies
$${\rm m}_\rho (W(e_1 \otimes \cdots \otimes e_n) u_g) = \rho^n W(e_1 \otimes \cdots \otimes e_n) u_g.$$
With $\rho_t = \cos(\frac{\pi}{2} t)$, a straightforward calculation yields $E_M \circ \theta_t = {\rm m}_{\rho_t}$ for all $t \in \R$. In this respect, $(\theta_t)_{t \in \R}$ is a {\em dilation} of the one-parameter family $({\rm m}_{\rho_t})_{t \in \R}$ of unital completely positive maps on $M$. 

Denote by $\mathcal H_n = H^{\otimes n}$ the closed linear subspace of $\mathcal F(H)$ of all the words $e_1 \otimes \cdots \otimes e_n$ of length $n \geq 1$. By convention, denote $\mathcal H_0 = \C \Omega$. We have 
$$\LL^2(M) = \bigoplus_{n \in \N} (\mathcal H_n \otimes \ell^2(G)).$$

\begin{prop}\label{calculation}
Let $t \in [-1, 1]$, $x \in M$ and write $x = \sum_{n \in \N} \xi_n$ where $\xi_n \in \mathcal H_n \otimes \ell^2(G)$. The following hold:
\begin{enumerate}
\item $\tau(\theta_t(x) x^*) = \sum_{n \in \N} \rho_t^{n} \|\xi_n\|_2^2$.
\item $\frac12 \|x - \theta_t(x)\|_2^2 \leq \|(E_M\circ\theta_t)(x) - \theta_t(x)\|_2^2$.
\end{enumerate}
\end{prop}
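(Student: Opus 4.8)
The plan is to reduce both identities to the two structural facts already in hand: the multiplier formula $E_M \circ \theta_t = {\rm m}_{\rho_t}$ and the orthogonal gradation $\LL^2(M) = \bigoplus_{n \in \N}(\mathcal H_n \otimes \ell^2(G))$ by word length, on which ${\rm m}_{\rho_t}$ acts diagonally by the scalar $\rho_t^n$. Throughout I write $a_n := \|\xi_n\|_2^2$, and I note at the outset that the hypothesis $t \in [-1,1]$ forces $\rho_t = \cos(\tfrac\pi2 t) \in [0,1]$, which is exactly what makes the final scalar inequality go through.

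For $(1)$ I would first push the computation from $\widetilde M$ down into $M$. Since $\tau$ is a trace, $\tau(\theta_t(x)x^*) = \tau(x^*\theta_t(x)) = \langle \theta_t(x), x\rangle$, and because $x \in \LL^2(M)$ while $E_M$ is the orthogonal projection onto $\LL^2(M)$, this equals $\langle E_M\theta_t(x), x\rangle = \langle {\rm m}_{\rho_t}(x), x\rangle$. Writing ${\rm m}_{\rho_t}(x) = \sum_{n} \rho_t^n \xi_n$ and expanding against $x = \sum_m \xi_m$, the orthogonality of the length components kills all cross terms and leaves
$$\tau(\theta_t(x)x^*) = \sum_{n \in \N} \rho_t^n \langle \xi_n, \xi_n\rangle = \sum_{n \in \N} \rho_t^n \, a_n,$$
which is moreover real and nonnegative.

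For $(2)$ I would expand each side in terms of the $a_n$ and $\rho_t$. Since $\theta_t$ is an $\LL^2$-isometry, $\|\theta_t(x)\|_2 = \|x\|_2$, and by $(1)$ the inner product $\langle \theta_t(x), x\rangle = \sum_n \rho_t^n a_n$ is real, so
$$\|x - \theta_t(x)\|_2^2 = 2\|x\|_2^2 - 2\sum_{n} \rho_t^n a_n = 2\sum_{n \in \N} (1 - \rho_t^n)\, a_n.$$
On the other hand, $E_M\theta_t(x) = {\rm m}_{\rho_t}(x)$ is the orthogonal projection of $\theta_t(x)$ onto $\LL^2(M)$, so by the Pythagorean identity $\|E_M\theta_t(x) - \theta_t(x)\|_2^2 = \|\theta_t(x)\|_2^2 - \|{\rm m}_{\rho_t}(x)\|_2^2 = \sum_n a_n - \sum_n \rho_t^{2n} a_n$, that is,
$$\|E_M\theta_t(x) - \theta_t(x)\|_2^2 = \sum_{n \in \N} (1 - \rho_t^{2n})\, a_n.$$
It then remains to compare the two sums, and this is a termwise scalar estimate: since $\rho_t \in [0,1]$ we have $1 - \rho_t^n \geq 0$ and $1 + \rho_t^n \geq 1$, whence $1 - \rho_t^n \leq (1-\rho_t^n)(1+\rho_t^n) = 1 - \rho_t^{2n}$ for every $n$. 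Summing against the nonnegative weights $a_n$ yields $\tfrac12\|x - \theta_t(x)\|_2^2 = \sum_n (1-\rho_t^n)a_n \leq \sum_n (1-\rho_t^{2n})a_n$, as required.

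I do not expect any genuine obstacle here: all the analytic content is absorbed into the already-established identity $E_M \circ \theta_t = {\rm m}_{\rho_t}$ and the word-length gradation that diagonalizes it. The only points demanding a little care are verifying that $\tau(\theta_t(x)x^*)$ is real so that the manipulations in $(2)$ with $\operatorname{Re}\langle x, \theta_t(x)\rangle$ are harmless (this is a byproduct of $(1)$), and keeping $t$ in $[-1,1]$ so that $\rho_t \geq 0$ and the termwise inequality holds.
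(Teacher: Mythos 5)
Your proof is correct and follows essentially the same route as the paper: part $(1)$ via $\tau(\theta_t(x)x^*) = \tau(E_M(\theta_t(x))\,x^*)$ and the diagonal action of ${\rm m}_{\rho_t}$ on the word-length decomposition, and part $(2)$ by computing both sides as $2\sum_n(1-\rho_t^n)\|\xi_n\|_2^2$ and $\sum_n(1-\rho_t^{2n})\|\xi_n\|_2^2$ (Pythagoras for the latter) and comparing termwise using $0 \leq \rho_t \leq 1$. The only cosmetic difference is that you make explicit the realness of $\tau(\theta_t(x)x^*)$, which the paper handles by writing $\Re\,\tau(\theta_t(x)x^*)$; the substance is identical.
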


\begin{proof}
For $(1)$, observe that $\tau(\theta_t(x) x^*) = \tau(E_M(\theta_t(x)) x^*) = \sum_{n \in \N} \rho_t^n \|\xi_n\|_2^2$. 

For $(2)$, observe that 
$$\|(E_M \circ \theta_t)(x) - \theta_t(x)\|_2^2 = \|x\|_2^2 - \|(E_M \circ \theta_t)(x)\|_2^2 = \sum_{n \in \N} (1 - \rho_t^{2n}) \|\xi_n\|_2^2$$ 
and
$$\|x - \theta_t(x)\|_2^2 = 2 (\|x\|_2^2 - \Re \tau(\theta_t(x) x^*)) = 2 \sum_{n \in \N} (1 - \rho_t^n) \|\xi_n\|_2^2.$$
Since $0 \leq \rho_t \leq 1$ for all $t \in [-1, 1]$, we obtain $\frac12 \|x - \theta_t(x)\|_2^2 \leq \|(E_M \circ\theta_t)(x) - \theta_t(x)\|_2^2$.
\end{proof}

We say that a von Neumann subalgebra $P \subset \Gamma(H_\R)\dpr \rtimes_\pi G$ is $(\theta_t)$-{\em rigid} if $(\theta_t)$ converges to $\id$ in $\|\cdot\|_2$ uniformly on the unit ball $(P)_1$. The next theorem shows that any $(\theta_t)$-rigid von Neumann subalgebra $P \subset \Gamma(H_\R)\dpr \rtimes_\pi G$ can be embedded into $\LL(G)$ inside $\Gamma(H_\R)\dpr \rtimes_\pi G$.

\begin{theo}\label{intertwining1}
Let $G$ be any countable discrete group and $\pi : G \to \mathcal{O}(H_\R)$ any orthogonal representation. Put  $M = \Gamma(H_\R)\dpr \rtimes_\pi G$. Let $p \in M$ be a non-zero projection. Let  $P \subset pMp$ be a von Neumann subalgebra and assume that there exist $c > 0$ and $t \in (-1, 0) \cup (0, 1)$ such that
$$\tau(\theta_t(u) u^*) \geq c, \forall u \in \mathcal U(P).$$
Then $P \preceq_M \LL(G)$.
\end{theo}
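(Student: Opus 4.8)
The plan is to deduce $P \preceq_M \LL(G)$ from the hypothesis $\tau(\theta_t(u)u^*) \geq c$ for all $u \in \mathcal U(P)$ by first upgrading this ``single-$t$'' rigidity estimate to a uniform $\|\cdot\|_2$-convergence of the deformation on the unit ball of $P$, i.e. to the fact that $P$ is $(\theta_t)$-rigid, and then invoking Popa's standard criterion for intertwining via a malleable deformation. The guiding principle throughout is Popa's transversality/rigidity machinery: a spectral-gap-free rigidity argument that turns the smallness of $x - \theta_t(x)$ into an intertwining conclusion against the subalgebra $\LL(G)$ that is fixed pointwise by $\theta_t$.

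\textbf{Step 1: From a lower bound at one $t$ to uniform rigidity.} Using Proposition \ref{calculation}(1), the hypothesis $\tau(\theta_t(u)u^*) = \sum_{n} \rho_t^{2 \cdot \text{(something)}}$\ldots more precisely $\tau(\theta_t(u)u^*) = \sum_n \rho_t^n \|\xi_n^{(u)}\|_2^2 \geq c$, where $u = \sum_n \xi_n^{(u)}$ is the word-length decomposition. Since $\rho_t \in (0,1)$ for $t \in (-1,0)\cup(0,1)$, this says that a definite proportion of the $\LL^2$-mass of every unitary $u \in \mathcal U(P)$ sits in low word-lengths, uniformly over $\mathcal U(P)$. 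A standard convexity/tail-estimate argument (the map $s \mapsto \rho_s^n$ is increasing as $s \to 0$, and $\rho_s \to 1$ uniformly on bounded word-length) then shows that $\tau(\theta_s(u)u^*)$ is uniformly bounded below by a constant tending to $1$ as $s \to 0$, for all $u \in \mathcal U(P)$; equivalently $\sup_{u \in \mathcal U(P)} \|u - \theta_s(u)\|_2 \to 0$ as $s \to 0$. This is exactly the statement that $P$ is $(\theta_t)$-rigid.

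\textbf{Step 2: The transversality inequality and the basic construction.} With uniform rigidity in hand, I would run Popa's argument inside the amalgamated free product picture $\widetilde M = M \ast_{\LL(G)} M$. The crucial input is the transversality estimate of Proposition \ref{calculation}(2), namely $\frac12\|x - \theta_t(x)\|_2^2 \leq \|(E_M \circ \theta_t)(x) - \theta_t(x)\|_2^2$, which controls the $\LL(G)$-relative behaviour of the deformation. If $P \npreceq_M \LL(G)$, then by the third equivalent condition in Popa's intertwining criterion (Section \ref{intertwining-techniques}) there is a net of unitaries $(w_k)$ in $P$ with $\|E_{\LL(G)}(x^* w_k y)\|_2 \to 0$ for all $x,y \in pM$. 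I would feed this net into the transversality inequality: uniform rigidity forces $\|w_k - \theta_t(w_k)\|_2$ to stay small, while the mixing/malleable structure of $\theta_t$ relative to $\LL(G)$ combined with $E_{\LL(G)}(x^* w_k y) \to 0$ forces $\|(E_M \circ \theta_t)(w_k) - \theta_t(w_k)\|_2$ to stay bounded away from $0$ — a contradiction with Proposition \ref{calculation}(2). Hence $P \preceq_M \LL(G)$.

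\textbf{Main obstacle.} The delicate point is Step 2: controlling $\|(E_M \circ \theta_t)(w_k) - \theta_t(w_k)\|_2$ from below along the non-intertwining net. One must show that if the word-length-$\geq 1$ part of $\theta_t(w_k)$ does \emph{not} escape from $M$ (i.e.\ $E_M \circ \theta_t(w_k)$ stays close to $\theta_t(w_k)$), then $w_k$ cannot simultaneously have vanishing $\LL(G)$-expectations after multiplication by elements of $pM$; this is where the specific free-Fock-space structure of $\widetilde M = M \ast_{\LL(G)} M$ and the fact that $\theta_t$ rotates $H_\R \oplus 0$ into $0 \oplus H_\R$ must be used, rather than abstract nonsense. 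Concretely I expect to invoke the fact that $\theta_t(M) \cap M$ interacts with $\LL(G)$ only through the amalgam, so that the deformation pushes the nontrivial words of $w_k$ genuinely out of $M$ unless $w_k$ already intertwines into $\LL(G)$; quantifying this ``pushing out'' uniformly along the net is the technical heart of the proof.
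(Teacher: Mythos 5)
Your proposal breaks down at Step 1, and this is a genuine gap rather than a fixable technicality. The hypothesis $\tau(\theta_t(u)u^*)\geq c$ for all $u\in\mathcal U(P)$ does \emph{not} imply that $(\theta_s)$ converges to $\id$ uniformly on $\mathcal U(P)$ as $s\to 0$. By Proposition \ref{calculation}(1) it only says that, uniformly over $\mathcal U(P)$, a fraction of the $\LL^2$-mass bounded below (roughly $c$) sits at low word lengths; it still allows a fraction up to $1-c$ of the mass of each unitary to sit at \emph{arbitrarily high} word lengths, so by monotonicity the best uniform bound as $s\to0$ is $\inf_u\tau(\theta_s(u)u^*)\geq c$ — it never improves towards $1$. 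Concretely: assume $\LL(G)$ contains a projection $z_1$ with $0<\tau(z_1)<1$, put $z_2=1-z_1$, and let $P=z_1\LL(G)z_1\oplus P_2$ where $P_2\subset z_2Mz_2$ is an \emph{arbitrary} subalgebra with unit $z_2$. Writing $u=u_1+u_2$ with $u_1\in\mathcal U(z_1\LL(G)z_1)$, $u_2\in\mathcal U(P_2)$, one has $\theta_t(u_1)=u_1$, all cross terms vanish (since $z_1z_2=0$ and ${\rm m}_{\rho_t}=E_M\circ\theta_t$ is $\LL(G)$-bimodular, so $\tau(\theta_t(u_2)u_1^*)=\langle E_{\LL(G)}(u_2),u_1\rangle=0$), and $\tau(\theta_t(u_2)u_2^*)\geq 0$ by Proposition \ref{calculation}(1); hence $\tau(\theta_t(u)u^*)\geq\tau(z_1)$ for \emph{every} $u\in\mathcal U(P)$, whatever $P_2$ is. But taking for $P_2$ (when $M$ is a factor, e.g.\ $\pi$ faithful) a unitary conjugate into $z_2Mz_2$ of a corner of the diffuse abelian algebra $\{W(e)\}\dpr$, the powers of its Haar unitary have word-length decompositions escaping to infinity, so $\sup_{u\in\mathcal U(P)}\|u-\theta_s(u)\|_2$ stays bounded below as $s\to 0$: uniform rigidity fails. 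The theorem is of course still true for this $P$ (the corner $z_1Pz_1$ sits inside $\LL(G)$), and that is exactly the point: the conclusion $P\preceq_M\LL(G)$ concerns \emph{some corner} of $P$, so no argument upgrading the hypothesis to a global uniform statement on all of $\mathcal U(P)$ can work.

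Step 2 has two further problems. First, the contradiction you describe does not parse: Proposition \ref{calculation}(2) reads $\frac12\|x-\theta_t(x)\|_2^2\leq\|(E_M\circ\theta_t)(x)-\theta_t(x)\|_2^2$, and ``left side small, right side bounded below'' is perfectly consistent with that inequality. (The comparison you would actually need, $\|(E_M\circ\theta_t)(x)-\theta_t(x)\|_2^2\leq\|x-\theta_t(x)\|_2^2$, does hold because $1-\rho_t^{2n}\leq 2(1-\rho_t^n)$, so this part is repairable.) Second, and fatally, your claim that a non-intertwining net $(w_k)$ forces $\|\theta_t(w_k)-(E_M\circ\theta_t)(w_k)\|_2$ to stay bounded below is precisely the contrapositive of the theorem being proved, and you supply no mechanism for it; the soft route one might hope for (mass concentrated on word lengths $\leq N$ plus Popa's finite-trace-projection criterion) is blocked because the projection of $\LL^2(M)$ onto $\bigoplus_{n\leq N}\mathcal H_n\otimes\ell^2(G)$ has \emph{infinite} trace in $\langle M,e_{\LL(G)}\rangle$ whenever $\dim H_\R=\infty$. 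The paper's proof follows a different mechanism and never upgrades the constant $c$: by Proposition \ref{calculation}(1) the map $t\mapsto\tau(\theta_t(x)x^*)$ is even and decreasing on $[0,1]$, so the hypothesis passes to a dyadic $2^{-n}\leq|t|$ with the \emph{same} constant; then, as in \cite[Theorem 4.3]{houdayer-ricard} to which the paper defers, Popa's convexity trick applies: the minimal $\|\cdot\|_2$-norm element $a$ of $\overline{\operatorname{co}}^{\,w}\{\theta_{2^{-n}}(u)u^*:u\in\mathcal U(P)\}$ satisfies $\Re\tau(a)\geq c$, hence $a\neq0$, and $\theta_{2^{-n}}(u)\,a\,u^*=a$ for all $u$, so its polar part is a nonzero partial isometry $v\in\widetilde M$ with $\theta_{2^{-n}}(u)v=vu$; the malleability relations $\beta^2=\id$, $\theta_t\beta=\beta\theta_{-t}$ allow one to double the parameter $n$ times and produce a nonzero partial isometry $w$ with $\theta_1(u)w=wu$ for all $u\in\mathcal U(P)$; finally, since $\widetilde M=M\ast_{\LL(G)}\theta_1(M)$ with $P\subset M$ and $\theta_1(P)\subset\theta_1(M)$, the free position of the two copies over $\LL(G)$ (an Ioana--Peterson--Popa type analysis) converts this into $P\preceq_M\LL(G)$. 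It is this convexity-plus-malleability argument, which automatically localizes to the correct corner of $P$, that replaces both of your steps.
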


\begin{proof}
Let $c > 0$ and $t \in (-1, 0) \cup (0, 1)$ such that $\tau(\theta_t(u) u^*) \geq c$ for all $u \in \mathcal U(P)$. By Proposition \ref{calculation}, we have that $t \mapsto \tau(\theta_t(x) x^*)$ is an even function which is decreasing on $[0, 1]$ for all $x \in M$. We can find $n \in \N$ large enough so that $2^{-n} \leq | t |$. Thus $\tau(\theta_{2^{-n}}(u) u^*) \geq \tau(\theta_t(u)u^*) \geq c$ for all $u \in \mathcal U(P)$. Now the rest of the proof is entirely identical to the one of \cite[Theorem 4.3]{houdayer-ricard} (see also \cite[Theorem 5.2]{houdayer3}) and leads to $P \preceq_M \LL(G)$.
\end{proof}

\section{Intertwining subalgebras in ${\rm II_1}$ factors $\Gamma(H_\R)\dpr \rtimes_\pi G$}\label{intertwining}

We keep the same notation as in Section \ref{deformation}. The aim of this section is to prove the following intertwining theorem for subalgebras of $\Gamma(H_\R)\dpr \rtimes_\pi G$ which is inspired by \cite[Theorem 3.2]{ioana-cartan}.

\begin{theo}\label{intertwining2}
Let $G$ be any countable discrete group and $\pi : G \to \mathcal{O}(H_\R)$ any orthogonal representation. Put  $M = \Gamma(H_\R)\dpr \rtimes_\pi G$. Let $p \in M$ be a nonzero projection and $P \subset pMp$ a von Neumann subalgebra. Let $t \in (-1, 0) \cup (0, 1)$ such that $\theta_t(P) \preceq_{\widetilde M} M$. Then $P \preceq_M \LL(G)$.
\end{theo}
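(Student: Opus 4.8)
The plan is to prove the contrapositive: assuming $P \not\preceq_M \LL(G)$, I will produce a net of unitaries in $\theta_t(P)$ that witnesses $\theta_t(P) \not\preceq_{\widetilde M} M$. The starting point is Theorem \ref{intertwining1}, whose contrapositive supplies exactly the net I need. Indeed, since $P \not\preceq_M \LL(G)$, for the given $t$ and every $c > 0$ the hypothesis of Theorem \ref{intertwining1} must fail, so there is a net $u_k \in \mathcal U(P)$ with $\tau(\theta_t(u_k) u_k^*) \to 0$. Writing $u_k \Omega = \sum_{n \in \N} \xi_n^{(k)}$ with $\xi_n^{(k)} \in \mathcal H_n \otimes \ell^2(G)$, Proposition \ref{calculation}(1) rewrites this as $\sum_{n} \rho_t^{\,n} \|\xi_n^{(k)}\|_2^2 \to 0$, where $\rho_t = \cos(\tfrac{\pi}{2}t)$; that is, the word-length distribution of $u_k$ escapes to infinity. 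In particular, for each fixed $L$ one has $\sum_{n \le L}\|\xi_n^{(k)}\|_2^2 \to 0$ (the surviving weights $\rho_t^{\,n}$ with $n \le L$ are bounded below by $\rho_t^{\,L} > 0$) and $\sum_{n} \rho_t^{\,2n}\|\xi_n^{(k)}\|_2^2 \to 0$.

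To prove $\theta_t(P) \not\preceq_{\widetilde M} M$ it suffices, using the third characterization in Section \ref{intertwining-techniques} together with $\theta_t(p)\theta_t(u_k) = \theta_t(u_k)$, to show that $\|E_M(a^* \theta_t(u_k) b)\|_2 \to 0$ for all $a, b$ in the linear span $\mathcal D$ of the words $W(\eta) u_g$, with $\eta$ a simple tensor in $\mathcal F(H \oplus H)$ and $g \in G$. Since $\mathcal D$ is $\|\cdot\|_2$-dense in $\widetilde M$ and since $\|E_M(a^* \theta_t(u_k) b)\|_2 \le \|a\|_\infty \|b\|_2$ and $\le \|a\|_2 \|b\|_\infty$ (because $\|\theta_t(u_k)\|_\infty \le 1$ and $E_M$ is $\|\cdot\|_2$-contractive), a routine $\varepsilon/3$ approximation, using Kaplansky density to keep operator norms bounded, upgrades the conclusion from $\mathcal D$ to all of $\widetilde M$. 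Moreover, as $E_M$ and $\theta_t$ are $\LL(G)$-bimodular and $G$-equivariant, the group parts $u_g$ factor out, so I may take $a = W(\alpha)$ and $b = W(\beta)$ with $\alpha, \beta$ simple tensors of lengths $p, q$, and set $L = p + q$.

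The heart of the proof is the word estimate
$$\|E_M(a^* \theta_t(u) b)\|_2^2 \;\le\; C(a,b)\Big( \sum_{n \le L} \|\xi_n\|_2^2 \;+\; \sum_{n > L} \rho_t^{\,2(n - L)}\|\xi_n\|_2^2 \Big), \qquad u \in \mathcal U(P),\ u\Omega = \sum_n \xi_n,$$
whose right-hand side tends to $0$ along $(u_k)$ by the first paragraph, finishing the argument. To establish it, write $\theta_t(u)\Omega = \sum_n (U_t)^{\otimes n}\xi_n$, noting that on a first-copy letter $e \in H \oplus 0$ one has $U_t e = \rho_t\, e + \sin(\tfrac{\pi}{2}t)\,\bar e$ with $\bar e \in 0 \oplus H$. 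Expanding $W(\alpha)^* W((U_t)^{\otimes n}\xi_n) W(\beta)$ by the Wick product formula (Proposition \ref{wick}(2), applied iteratively at the two junctions) yields a finite sum, indexed by the numbers $0 \le j \le p$ and $0 \le l \le q$ of letters contracted on the left and right, of words $W(\gamma)$ weighted by at most $L$ junction inner products. Now $E_M$ is the orthogonal projection of $\LL^2(\widetilde M) = \bigoplus_n ((H \oplus H)^{\otimes n} \otimes \ell^2(G))$ onto $\LL^2(M) = \bigoplus_n (H^{\otimes n} \otimes \ell^2(G))$, namely $P_1^{\otimes n}$ onto all-first-copy words, where $P_1 : H \oplus H \to H \oplus 0$. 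A term therefore survives $E_M$ only when every remaining letter is first-copy: each interior letter $U_t e_i$ is projected to $\rho_t\, e_i$, producing a global factor $\rho_t^{\,n - j - l}$, while uncontracted second-copy letters of $\alpha$ or $\beta$ are killed, so the junction scalars are bounded purely in terms of $\|\alpha\|,\|\beta\|$. Crucially, a second-copy letter of $\theta_t(u)$ can be removed only by junction contraction, of which there are at most $p$ on the left and $q$ on the right; hence the interior is an all-first-copy subword of $\xi_n$ of length $\ge n - L$, carrying weight $\rho_t^{\,\ge n - L}$. As distinct surviving words are orthonormal in $\LL^2(M)$, summing squared norms over $n$ and over the finitely many patterns $(j,l)$ gives the claimed estimate.

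The main obstacle is exactly this last computation: one must check rigorously that junction contraction is the \emph{only} way to annihilate a second-copy letter — so that the number of second-copy letters absorbed stays bounded by $L$ uniformly in $n$ — and that the projected interior words stay pairwise orthogonal so that no cross terms corrupt the $\ell^2$-summation, which is where the freeness encoded in the Wick calculus of Proposition \ref{wick} is indispensable, together with a careful extraction of constants $C(a,b)$ independent of $u$ and $n$. Once the estimate holds, the decay factor $\rho_t^{\,n-L}$ suppresses precisely the large-word-length mass of $u_k$, and the input that this mass has escaped to infinity is delivered by the failure of $P \preceq_M \LL(G)$ through Theorem \ref{intertwining1}.
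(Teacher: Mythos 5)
Your overall architecture coincides with the paper's proof: argue the contrapositive, use Theorem \ref{intertwining1} to extract a net $u_k \in \mathcal U(P)$ with $\tau(\theta_t(u_k)u_k^*) \to 0$, and then prove a word estimate showing $\|E_M(a\,\theta_t(u_k)\,b)\|_2 \to 0$ for $a,b$ in a dense family (this is exactly the paper's Theorem \ref{convergence}, proved via Lemma \ref{technical-lemma}), which denies $\theta_t(P) \preceq_{\widetilde M} M$. Your reductions (Kaplansky density, factoring out the group part, the deduction of $\sum_{n \leq L}\|\xi_n^{(k)}\|_2^2 \to 0$ and $\sum_n \rho_t^{2n}\|\xi_n^{(k)}\|_2^2 \to 0$ from $\sum_n \rho_t^n\|\xi_n^{(k)}\|_2^2 \to 0$) are all sound, and the estimate you state is in fact true.

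The gap sits precisely where you flag "the main obstacle", and your proposed resolution of it is incorrect as stated. For \emph{arbitrary} words $a = W(\alpha)$, $b = W(\beta)$, the Wick expansion of $E_M(a^*\,\theta_t(\xi_n)\,b)$ has many surviving contraction patterns $(j,l)$, because uncontracted letters of $\alpha,\beta$ lying in $H \oplus 0$ survive $E_M$; the resulting words have lengths $n + p + q - 2j - 2l$. Hence contributions from \emph{different} $n$ can land in the same word-length subspace (level $n$ with pattern $(j,l)$ and level $n+2$ with pattern $(j+1,l)$ collide), so your claim that "distinct surviving words are orthonormal in $\LL^2(M)$", which is what licenses summing squared norms over $n$, is false in this generality: there are genuine cross terms. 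Two repairs are possible. One is a finite-overlap argument: at most $(p+1)(q+1)$ patterns contribute to each fixed length, so Cauchy--Schwarz recovers your estimate with a larger $C(a,b)$. The other is the paper's normalization, which is the cleaner missing idea: since the junction inner product between a letter of $H \oplus 0$ and a letter of $0 \oplus H$ vanishes, Proposition \ref{wick}(2) gives the \emph{exact} factorization $W(\zeta_1 \otimes \cdots \otimes \zeta_m) = W(\zeta_1 \otimes \cdots \otimes \zeta_j)\,W(\zeta_{j+1} \otimes \cdots \otimes \zeta_m)$ when $\zeta_1, \dots, \zeta_j \in H \oplus 0$ and $\zeta_{j+1} \in 0 \oplus H$; pulling such $M$-factors through $E_M$ by bimodularity, one may assume $a$ \emph{begins}, and $b$ \emph{ends}, with a letter from $0 \oplus H$. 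With that normalization the surviving Wick pattern is unique (every letter of $a$ and of $b$ must contract, since otherwise the extreme second-copy letter survives and is killed by $E_M$), each $E_M\bigl(a\,\widetilde{\mathcal J}b^*\widetilde{\mathcal J}\,\theta_t(\mathcal H_n \otimes \ell^2(G))\bigr)$ lies in the single subspace $\mathcal H_{n-r-s} \otimes \ell^2(G)$ --- this is relation $(\ref{orthogonal-vectors})$ --- and the orthogonality across $n$ that your $\ell^2$-summation needs becomes exact.
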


The proof of Theorem \ref{intertwining2} relies on the following convergence result.

\begin{theo}\label{convergence}
Let $G$ be any countable discrete group and $\pi : G \to \mathcal{O}(H_\R)$ any orthogonal representation. Put  $M = \Gamma(H_\R)\dpr \rtimes_\pi G$. Let $t \in (-1, 0) \cup (0, 1)$ and a net $x_k \in (M)_1$  such that $\lim_{k} \tau(\theta_t(x_k) x_k^*) = 0$. Then
$$\lim_{k} \|E_M(a \theta_t(x_k) b)\|_2 = 0, \forall a, b \in \widetilde M.$$
\end{theo}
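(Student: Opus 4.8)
The plan is to use the amalgamated free product picture $\widetilde M = M \ast_{\LL(G)} M$ together with the explicit action of $\theta_t$ on words, and to show that the hypothesis forces $\theta_t(x_k)$ to concentrate, inside $\LL^2(\widetilde M)$, on words carrying arbitrarily many letters from the second copy $0 \oplus H$ --- a subspace that $E_M$ annihilates. Set $c = \rho_t = \cos(\tfrac\pi2 t) \in (0,1)$ and $s = \sin(\tfrac\pi2 t) \neq 0$ (here $t \in (-1,0)\cup(0,1)$ is used), and grade $\LL^2(\widetilde M)$ by the number of second-copy letters: let $P_{\leq m}$ be the orthogonal projection onto the closed span of the vectors $W(e_1 \otimes \cdots \otimes e_n) u_g \Omega$ whose word uses at most $m$ letters from $0 \oplus H$, and $P_{>m} = 1 - P_{\leq m}$. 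Since $H \oplus 0 \perp 0 \oplus H$ this is an orthogonal decomposition, $\LL^2(M)$ is the range of $P_{\leq 0}$, and in particular $E_M P_{>0} = 0$.

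First I would translate the hypothesis. Writing $x_k = \sum_n \xi_n^{(k)}$ with $\xi_n^{(k)} \in \mathcal H_n \otimes \ell^2(G)$, Proposition \ref{calculation}(1) gives $\tau(\theta_t(x_k)x_k^*) = \sum_n c^n \|\xi_n^{(k)}\|_2^2$, so the assumption and $c < 1$ yield $\sum_{n \leq N}\|\xi_n^{(k)}\|_2^2 \to 0$ as $k \to \infty$ for every fixed $N$: the $\LL^2$-mass of $x_k$ escapes to words of unbounded length. Next I would show that the shallow part of $\theta_t(x_k)$ vanishes. Since $\theta_t$ sends each first-copy letter $e \oplus 0$ to $c\,(e\oplus 0) + s\,(0\oplus e)$, expanding a length-$n$ word shows that the squared norm it places on words with at most $m$ second-copy letters equals $w_{n,m} = \sum_{j=0}^m \binom nj c^{2(n-j)}s^{2j}$, whence $\|P_{\leq m}\theta_t(x_k)\|_2^2 = \sum_n w_{n,m}\|\xi_n^{(k)}\|_2^2$. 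As $w_{n,m} \leq 1$ and, for fixed $m$, $w_{n,m} \to 0$ when $n \to \infty$ (it is $c^{2n}$ times a polynomial in $n$), splitting the sum at a large $N$ and invoking the escape of mass gives $\lim_k \|P_{\leq m}\theta_t(x_k)\|_2 = 0$ for each fixed $m$.

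The heart of the argument, and the step I expect to be the main obstacle, is to prove that multiplying a deep word on the left and right by fixed elements cannot bring it back into $\LL^2(M)$. Using the a priori estimates $\|E_M(a y b)\|_2 \leq \|a\|_\infty\|y\|_\infty\|b\|_2$ and $\|E_M(a y b)\|_2 \leq \|a\|_2\|y\|_\infty\|b\|_\infty$, together with Kaplansky density, I would reduce to monomials $a = W(\alpha)u_g$, $b = W(\beta)u_h$ with $\alpha, \beta$ pure tensors, and write $r(\alpha), r(\beta)$ for their numbers of second-copy letters. Pushing the group elements to the right (the block-diagonal representation $\pi \oplus \pi$ preserves each copy, hence the value of $r$) turns $a\, \zeta\, b$, for a word $\zeta$, into $W(\alpha)\,W(\gamma')\,W(\beta')$ multiplied on the right by a single $u_{g'}$, where the middle word $\gamma'$ has the same second-copy length as $\zeta$. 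Iterating the multiplication formula of Proposition \ref{wick}(2) expands $W(\alpha)\,W(\gamma')\,W(\beta')$ into words built by concatenation with nested contractions at the two junctions, each contraction pairing a letter of one factor with a letter of the next via an inner product. Because $H \oplus 0 \perp 0 \oplus H$, a second-copy letter of $\gamma'$ can be contracted away only against a second-copy letter of $\alpha$ or $\beta'$; hence at most $r(\alpha) + r(\beta)$ of its second-copy letters can disappear. Therefore, for $m \geq r(\alpha) + r(\beta)$, every word occurring in $a\, \zeta\, b$ with $\zeta \in P_{>m}\LL^2(\widetilde M)$ still contains a second-copy letter, so $E_M(a\,\zeta\,b) = 0$.

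Finally I would combine the pieces: for monomials $a, b$ and $m \geq r(\alpha)+r(\beta)$ one has $E_M(a\,\theta_t(x_k)\,b) = E_M\big(a\,P_{\leq m}\theta_t(x_k)\,b\big)$, whose $\LL^2$-norm is at most $\|a\|_\infty\|b\|_\infty\, \|P_{\leq m}\theta_t(x_k)\|_2 \to 0$ by the second paragraph; the density estimates above then pass the conclusion to arbitrary $a, b \in \widetilde M$. The delicate point throughout is the bookkeeping in the third paragraph: one must verify that the Wick contractions genuinely respect the orthogonal splitting of the two copies, so that the second-copy degree of a word can drop by no more than the fixed second-copy content of the multipliers.
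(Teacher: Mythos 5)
Your proposal is correct, and it reaches the conclusion by a route that is organized genuinely differently from the paper's, even though the two share the same skeleton: the translation of the hypothesis into $\lim_k\|\xi_{k,n}\|_2=0$ for each fixed word length $n$, the reduction to monomials via Kaplansky density and $M$-bimodularity of $E_M$, and the basic fact that a Wick contraction can pair a letter of $0\oplus H$ only with another letter of $0\oplus H$. The difference is in the key lemma. The paper grades $\LL^2(M)$ by word length and, after reducing to words $a$ whose \emph{first} letter and $b$ whose \emph{last} letter lie in $0\oplus H$, proves in Lemma \ref{technical-lemma} the explicit per-length estimate $\kappa_n\leq\rho_t^{\,n-r-s}$ together with the range inclusion (\ref{orthogonal-vectors}); that cross-length orthogonality is what allows the contributions of the various $\xi_{k,n}$ to be summed. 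You instead grade $\LL^2(\widetilde M)$ by second-copy degree and split the argument into two independent mechanisms: a soft, purely combinatorial one (multiplication by monomials carrying $r(\alpha)+r(\beta)$ second-copy letters can lower the degree by at most $r(\alpha)+r(\beta)$, hence $E_M\bigl(a\,P_{>m}(\cdot)\,b\bigr)=0$ once $m\geq r(\alpha)+r(\beta)$ --- no norm estimate, no special placement of second-copy letters in $a,b$, no cross-length orthogonality), and a quantitative one isolated in the elementary binomial bound $w_{n,m}\leq(m+1)\,n^m c^{2(n-m)}\to0$. The price is a slightly weaker decay rate (roughly $n^m c^{2n}$ instead of $c^{2n}$), which is irrelevant here; the gain is a clean conceptual separation of the two effects that the paper's single explicit computation of $E_M(a\,\widetilde{\mathcal J}b^*\widetilde{\mathcal J}\,\theta_t(w\otimes\delta_g))$ handles simultaneously. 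Two details to make explicit in a full write-up: the identity $\|P_{\leq m}\theta_t(x_k)\|_2^2=\sum_n w_{n,m}\|\xi_{k,n}\|_2^2$ rests on the mutual orthogonality of the ranges of the $2^n$ terms in the expansion of $U_t^{\otimes n}$, and the vanishing $E_M(a\,\zeta\,b)=0$ should first be verified on the dense span of pure-letter words in $P_{>m}\LL^2(\widetilde M)$ and then extended by boundedness of $\zeta\mapsto e_M(a\zeta b)$; both are straightforward and your sketch implicitly contains them.
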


\begin{proof}[Proof of Theorem $\ref{intertwining2}$ using Theorem $\ref{convergence}$]
Assume $P \npreceq_M \LL(G)$. Let $t \in (-1, 0) \cup (0, 1)$. By Theorem $\ref{intertwining1}$, there exists a net of unitaries $u_k \in \mathcal U(P)$ such that $\lim_k \tau(\theta_t(u_k) u_k^*) = 0$. By Theorem $\ref{convergence}$, we get $\lim_k \|E_M(a \theta_t(u_k) b)\|_2 = 0$ for all $a, b \in \widetilde M$, whence $\theta_t(P) \npreceq_{\widetilde M} M$.
\end{proof}

The proof of Theorem \ref{convergence} relies on the following technical result. As usual $H = H_\R \otimes_\R \C$ denotes the complexified space of $H_\R$. Put $\rho(g) = \id_{\C \Omega} \oplus \bigoplus_{n \geq 1} \pi(g)^{\otimes n}$ for all $g \in G$. We denote by $\mathcal F(H)$ the full Fock space of $H$.

Put $M = \Gamma(H_\R)\dpr \rtimes_\pi G$. We will identify $\LL^2(M)$ with $\mathcal F(H) \otimes \ell^2(G)$ and denote by $\mathcal J : \mathcal F(H) \otimes \ell^2(G) \to \mathcal F(H) \otimes \ell^2(G)$ the conjugation defined by $\mathcal J \Omega = \Omega$ and
$$\mathcal J (e_1 \otimes \cdots \otimes e_n \otimes \delta_g) = \pi(g)^* \overline e_n \otimes \cdots \otimes \pi(g)^* \overline e_1 \otimes \delta_{g^{-1}}$$ 
for all $n \geq 1$, all $e_i \in H$ and all $g \in G$. 

Likewise, put $\widetilde M = \Gamma(H_\R \oplus H_\R)\dpr \rtimes_\pi G$. We will identify $\LL^2(\widetilde M)$ with $\mathcal F(H \oplus H) \otimes \ell^2(G)$ and denote by $\widetilde{\mathcal J} : \mathcal F(H \oplus H) \otimes \ell^2(G) \to \mathcal F(H \oplus H) \otimes \ell^2(G)$ the conjugation defined by $\widetilde{\mathcal J} \Omega = \Omega$ and 
$$ \widetilde{\mathcal J} (e_1 \otimes \cdots \otimes e_n \otimes \delta_g) = \pi(g)^* \overline e_n \otimes \cdots \otimes \pi(g)^* \overline e_1 \otimes \delta_{g^{-1}}$$ 
for all $n \geq 1$, all $e_i \in H \oplus  H$ and all $g \in G$.

We view $M \subset \widetilde M$ by identifying $M$ with $\Gamma(H_\R \oplus 0)\dpr \rtimes_{\pi \oplus \pi} G$ inside $\widetilde M$. We will denote by $E_M : \widetilde M \to M$ the trace-preserving conditional expectation as well as the orthogonal projection $\LL^2(\widetilde M) \to \LL^2(M)$.

Denote by $\mathcal H_n = H^{\otimes n}$ the closed linear span in $\mathcal F(H)$ of all the words $e_1 \otimes \cdots \otimes e_n$ of length $n \geq 1$. By convention, denote $\mathcal H_0 = \C \Omega$.

\begin{lem}\label{technical-lemma}
Let $t \in (-1, 0) \cup (0, 1)$. Assume that
\begin{itemize}
\item $a = 1$ or $a =  W(\xi_1 \otimes \cdots \otimes \xi_r)$ is a word of length $r \geq 1$ in $\Gamma(H_\R \oplus H_\R)\dpr$ with letters $\xi_i$ in $H \oplus 0$ or $0 \oplus H$ and such that $\xi_1 \in 0 \oplus H$.
\item $b = 1$ or $b = W(\eta_1 \otimes \cdots \otimes \eta_s)$ is a word of length $s \geq 1$ in $\Gamma(H_\R \oplus H_\R)\dpr$ with letters $\eta_j$ in $H \oplus 0$ or $0 \oplus H$ and such that $\eta_s \in 0 \oplus H$. 
\end{itemize}

Put $\kappa_n = \sup \left \{ \| E_M(a \widetilde{\mathcal J} b^* \widetilde{\mathcal J} \theta_t(\zeta)) \|_2 : \zeta \in \mathcal H_n \otimes \ell^2(G), \|\zeta\|_2 \leq 1 \right \}$. Then $\lim_{n \to \infty} \kappa_n = 0$.
\end{lem}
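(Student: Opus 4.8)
The plan is to first read off $\widetilde{\mathcal J}b^*\widetilde{\mathcal J}$ as the operator of right multiplication by $b$ on $\LL^2(\widetilde M)$, so that the vector to control is $E_M(a\,\theta_t(\zeta)\,b)$, i.e.\ the $\LL^2(M)$-component of the triple bimodule product $a\cdot\theta_t(\zeta)\cdot b$. Writing $T_n\colon\mathcal H_n\otimes\ell^2(G)\to\LL^2(M)$ for the bounded map $T_n(\zeta)=E_M(a\,\theta_t(\zeta)\,b)$, we have $\kappa_n=\|T_n\|$, and it suffices to show $\|T_n\|\to 0$. Since $a,b\in\Gamma(H_\R\oplus H_\R)\dpr$ carry no group element, I would observe that $T_n$ is block-diagonal for the decomposition $\mathcal H_n\otimes\ell^2(G)=\bigoplus_g\mathcal H_n\otimes\delta_g$: left and right multiplication by $a,b$ preserve the $\ell^2(G)$-index and, on the $\delta_g$-block, merely replace the letters $\eta_j$ of $b$ by their rotates $\pi(g)\eta_j$ through the relation $u_g W(\eta)=W(\pi(g)\eta)u_g$, which changes no norm. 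Thus I reduce to a bound, uniform in $g$, on each block, and by multilinearity to the action of $T_n$ on a single simple tensor $e_1\otimes\cdots\otimes e_n$ with $e_i\in H\oplus 0$, noting that $\theta_t(e_1\otimes\cdots\otimes e_n)$ has $i$-th letter $U_te_i=c\,e_i\oplus s\,e_i$, where $c=\cos(\frac{\pi}{2}t)$ and $s=\sin(\frac{\pi}{2}t)$.

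The core is a Wick-expansion bookkeeping. Expanding the product of the three words $a=W(\xi_1\otimes\cdots\otimes\xi_r)$, $W(U_te_1\otimes\cdots\otimes U_te_n)$ and $b=W(\eta_1\otimes\cdots\otimes\eta_s)$ by iterating Proposition \ref{wick}(2) yields a sum of words, the contractions occurring only at the two junctions (last letters of $a$ paired with the first middle letters; last middle letters paired with the first letters of $b$). The key point is that $E_M$, being the orthogonal projection onto words all of whose letters lie in $H\oplus 0$, annihilates any word one of whose letters lies purely in $0\oplus H$. Since $\xi_1\in 0\oplus H$ is the \emph{leftmost} letter of $a$ and is never reached by the right-hand contractions, a term survives $E_M$ only if $a$ is \emph{fully} contracted against the first $r$ middle letters, i.e.\ $k=r$; symmetrically, since $\eta_s\in 0\oplus H$ is the \emph{rightmost} letter of $b$, survival forces the full contraction $l=s$ of $b$ against the last $s$ middle letters. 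Hence, for all $n\geq r+s$, there is a \emph{unique} surviving configuration.

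I would then write this unique term explicitly: after the two full contractions the remaining word is $W(U_te_{r+1}\otimes\cdots\otimes U_te_{n-s})$ multiplied by the scalar $\bigl(\prod_{i=1}^r\langle\overline{\xi_{r-i+1}},U_te_i\rangle\bigr)\bigl(\prod_{j=1}^s\langle\overline{U_te_{n-j+1}},\eta_j\rangle\bigr)$, and applying $E_M$ replaces each of the $n-r-s$ surviving middle letters $U_te_i$ by its $H\oplus 0$-component $c\,e_i$, pulling out a global factor $c^{\,n-r-s}$. This exhibits $T_n$, on each $\delta_g$-block, as the tensor product of a bounded functional on the first $r$ factors (of norm $\leq\prod_i\|\xi_i\|$), the operator $c^{\,n-r-s}\,\id$ on the $n-r-s$ middle factors, and a bounded functional on the last $s$ factors (of norm $\leq\prod_j\|\eta_j\|$), so that $\|T_n\|\leq c^{\,n-r-s}\prod_i\|\xi_i\|\prod_j\|\eta_j\|$ uniformly in $g$. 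Since $t\in(-1,0)\cup(0,1)$ gives $0<c<1$, the right-hand side tends to $0$ and $\kappa_n\to 0$. The cases $a=1$ or $b=1$ are the degenerate instances $r=0$ or $s=0$, where the corresponding constraint is vacuous and the bound reads $c^{\,n}$, $c^{\,n-s}$ or $c^{\,n-r}$; the finitely many $n<r+s$ are irrelevant to the limit since $\kappa_n\leq\|a\|_\infty\|b\|_\infty$ throughout.

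I expect the main obstacle to be the careful bookkeeping of the Wick expansion, namely verifying rigorously that $k=r$, $l=s$ is the only configuration surviving $E_M$ and that the two junction contractions do not interfere once $n\geq r+s$, together with checking that incorporating the group variable through $u_g W(\eta)=W(\pi(g)\eta)u_g$ leaves every relevant norm unchanged, so that the block-wise estimate is genuinely uniform in $g\in G$ and yields $\|T_n\|=\sup_g\|T_n^{(g)}\|\leq c^{\,n-r-s}\prod_i\|\xi_i\|\prod_j\|\eta_j\|$.
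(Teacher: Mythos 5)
Your proposal is correct and follows essentially the same route as the paper: identify $\widetilde{\mathcal J} b^* \widetilde{\mathcal J}$ as right multiplication by $b$, reduce by block-diagonality in $g$ to elementary tensors, expand via Proposition \ref{wick}, observe that only the fully contracted configuration survives $E_M$ because $\xi_1, \eta_s \in 0 \oplus H$ (and $\pi(g)$ preserves $0 \oplus H$), and extract the factor $\rho_t^{\,n-r-s}$ from projecting the middle letters. The only difference is cosmetic: you package the resulting norm bound as $\|\lambda \otimes \rho_t^{\,n-r-s}\,\mathrm{id} \otimes \mu\| = \|\lambda\|\,\rho_t^{\,n-r-s}\,\|\mu\|$, whereas the paper carries out the equivalent estimate by expanding over an orthonormal basis and applying Cauchy--Schwarz.
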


\begin{proof}
We may and will assume that $\|\xi_i\| = \|\eta_j\| = 1$ for all $1 \leq i \leq r$ and $1 \leq j \leq s$. Fix $\mathcal B = \{ e_i : i \geq 1 \}$ an orthonormal basis for $H$. Then
$$\mathcal B_n = \{e_{i_1} \otimes \cdots \otimes e_{i_n} : i_1, \dots, i_n \geq 1\}$$
forms an orthonormal basis for $\mathcal H_n$. Whenever $\zeta \in \mathcal H_n \otimes \ell^2(G)$, write $\zeta = \sum_{w \in \mathcal B_n, g \in G} \zeta_{w, g} \, w \otimes \delta_g$ with $\zeta_{w, g} \in \C$ such that $\sum_{w \in \mathcal B_n, g \in G} | \zeta_{w, g} |^2 = \|\zeta\|_2^2$.

We assume that $n \geq r + s + 1$ and $r, s \neq 0$.  Fix now $g \in G$ and $w \in \mathcal B_n$ that we write $w = e_{i_1} \otimes \cdots \otimes e_{i_n}$ for $i_1, \dots, i_n \geq 1$. We have $\theta_t(w \otimes \delta_g) = U_t e_{i_1} \otimes \cdots \otimes U_t e_{i_n} \otimes \delta_g$. We have
$$a \widetilde{\mathcal J} b^* \widetilde{\mathcal J} \theta_t(w \otimes \delta_g) = W(\xi_1 \otimes \cdots \otimes  \xi_r) W(U_t e_{i_1} \otimes \cdots \otimes U_t e_{i_n})W(\pi(g) \eta_1 \otimes \cdots \otimes \pi(g) \eta_s)\Omega \otimes \delta_g.$$

Applying repeatedly Proposition \ref{wick}, we have that $a \widetilde{\mathcal J} b^* \widetilde{\mathcal J} \theta_t(w \otimes \delta_g)$ is equal to
\begin{align*}
& W(\xi_1 \otimes \cdots \otimes  \xi_r \otimes U_t e_{i_1} \otimes \cdots \otimes U_t e_{i_n} \otimes \pi(g) \eta_1 \otimes \cdots \otimes \pi(g) \eta_s)\Omega \otimes \delta_g \\
&+ \langle \overline \xi_r, U_t e_{i_1}\rangle W(\xi_1 \otimes \cdots \otimes \xi_{r - 1}) W(U_t e_{i_2} \otimes \cdots \otimes U_t e_{i_n} \otimes \pi(g) \eta_1 \otimes \cdots \otimes \pi(g) \eta_s))\Omega \otimes \delta_g \\
&+ \langle U_t \overline e_{i_n}, \pi(g)\eta_1 \rangle W(\xi_1 \otimes \cdots \otimes  \xi_r \otimes U_t e_{i_1} \otimes \cdots \otimes U_t e_{i_{n - 1}})W(\pi(g) \eta_2 \otimes \cdots \otimes \pi(g) \eta_s) \Omega \otimes \delta_g \\
&+ \langle \overline \xi_r, U_t e_{i_1}\rangle \langle U_t \overline e_{i_n}, \pi(g)\eta_1 \rangle W(\xi_1 \otimes \cdots \otimes \xi_{r - 1}) W(U_t e_{i_2} \otimes \cdots \otimes U_t e_{i_{n - 1}}) W(\pi(g) \eta_2 \otimes \cdots \otimes \pi(g) \eta_s) \Omega \otimes \delta_g.
\end{align*}

Applying repeatedly Proposition \ref{wick} and using the facts that $\eta_s \in 0 \oplus H$ and $n \geq r + 1$, we have
$$E_M \left(W(\xi_1 \otimes \cdots \otimes \xi_{r - 1}) W(U_t e_{i_2} \otimes \cdots \otimes U_t e_{i_n} \otimes \pi(g) \eta_1 \otimes \cdots \otimes \pi(g) \eta_s))\Omega \otimes \delta_g \right) = 0$$

Likewise, applying repeatedly Proposition \ref{wick} and using the facts that $\xi_1 \in 0 \oplus H$ and $n \geq s + 1$, we have
$$E_M \left( W(\xi_1 \otimes \cdots \otimes  \xi_r \otimes U_t e_{i_1} \otimes \cdots \otimes U_t e_{i_{n - 1}})W(\pi(g) \eta_2 \otimes \cdots \otimes \pi(g) \eta_s) \Omega \otimes \delta_g \right) = 0.$$

Moreover, since $\xi_1, \eta_s \in 0 \oplus H$, we have
$$E_M \left( W(\xi_1 \otimes \cdots \otimes  \xi_r \otimes U_t e_{i_1} \otimes \cdots \otimes U_t e_{i_n} \otimes \pi(g) \eta_1 \otimes \cdots \otimes \pi(g) \eta_s)\Omega \otimes \delta_g \right) = 0.$$

Repeating this procedure by induction and using again repeatedly Proposition $\ref{wick}$, we finally obtain that 
$$E_M(a \widetilde{\mathcal J} b^* \widetilde{\mathcal J} \theta_t(w \otimes \delta_g)) = \rho_t^{n - r - s}\prod_{k = 1}^r \langle \overline \xi_{r - k + 1}, U_t e_{i_k} \rangle \prod_{l = 1}^s \langle U_t \overline e_{i_{n - l + 1}}, \pi(g) \eta_l \rangle \, e_{i_{r + 1}} \otimes \cdots \otimes e_{i_{n - s}} \otimes \delta_g,$$
with $\rho_t = \cos(\frac{\pi}{2} t)$. Observe that the above formula is still valid when $a = 1$, that is $r = 0$, or $b = 1$, that is, $s = 0$. This shows in particular that 
\begin{equation}\label{orthogonal-vectors}
E_M \left(a \widetilde{\mathcal J} b^* \widetilde{\mathcal J} \theta_t \left( \mathcal H_n \otimes \ell^2(G) \right) \right) \subset \mathcal H_{n - r - s} \otimes \ell^2(G).
\end{equation}

Whenever $w \in \mathcal B_n$, denote by $\mathcal T(w) \in \mathcal B_{n - r - s}$ the word obtained by removing the first $r$ letters and the last $s$ letters from $w$. In other words, if $w = e_{i_1} \otimes \cdots \otimes e_{i_n} \in \mathcal B_n$, we have $\mathcal T(w) = e_{i_{r + 1}} \otimes \cdots \otimes e_{i_{n - s}} \in \mathcal B_{n - r - s}$. What we have shown before can be rewritten as
\begin{align*}
E_M(a \widetilde{\mathcal J} b^* \widetilde{\mathcal J} \theta_t(w \otimes \delta_g)) & = \rho_t^{n - r - s} \prod_{k = 1}^r \langle U_t^* \overline \xi_{r - k + 1}, e_{i_k} \rangle \prod_{l = 1}^s \langle U_t^* \pi(g) \overline \eta_l, e_{i_{n - l + 1}} \rangle \, \mathcal T(w) \otimes \delta_g \\
& =  \rho_t^{n - r - s}  \left \langle U_t^* \overline \xi_r \otimes \cdots \otimes U_t^* \overline \xi_1 \otimes U_t^* \pi(g) \overline \eta_s \otimes \cdots \otimes U_t^* \pi(g) \overline \eta_1, u \right \rangle \mathcal T(w) \otimes \delta_g
\end{align*}
with $u = e_{i_1} \otimes \cdots \otimes e_{i_r} \otimes e_{i_{n - s + 1}} \otimes \cdots \otimes e_{i_n} \in \mathcal B_{r + s}$.

Recall that 
$$\zeta = \sum_{w \in \mathcal B_n, g \in G} \zeta_{w, g} \, w \otimes \delta_g = \sum_{g \in G} \sum_{v \in \mathcal B_{n - r - s}} \left( \sum_{w \in \mathcal B_n, \mathcal T(w) = v} \zeta_{w, g} \, w \otimes \delta_g \right).$$
Observe that for every $v \in \mathcal B_{n - r - s}$, there is a canonical one-to-one correspondence between $\mathcal B_{r + s}$ and $\{w \in \mathcal B_n : \mathcal T(w) = v\}$ via the map $\iota_v : \mathcal B_{r + s} \to \{w \in \mathcal B_n : \mathcal T(w) = v\}$ defined by
$$\iota_v \left (e_{i_1} \otimes \cdots \otimes e_{i_r} \otimes e_{i_{n - s + 1}} \otimes \cdots \otimes e_{i_n} \right) = e_{i_1} \otimes \cdots \otimes e_{i_r} \otimes v \otimes e_{i_{n - s + 1}} \otimes \cdots \otimes e_{i_n}.$$

We have that $\sum_{w \in \mathcal B_n, \mathcal T(w) = v} \zeta_{w, g} \, E_M(a \widetilde{\mathcal J} b^* \widetilde{\mathcal J} \theta_t(w \otimes \delta_g))$ is equal to 
$$\rho_t^{n - r - s} \left( \sum_{u \in \mathcal B_{r + s}} \zeta_{\iota_v(u), g} \left \langle U_t^* \overline \xi_r \otimes \cdots \otimes U_t^* \overline \xi_1 \otimes U_t^* \pi(g) \overline \eta_s \otimes \cdots \otimes U_t^* \pi(g) \overline \eta_1, u \right \rangle \right) v \otimes \delta_g.$$
Since $(u)_{u \in \mathcal B_{r + s}}$ is an orthonormal family and since $\|\xi_i\| = \|\eta_j\| = 1$ for all $1 \leq i \leq r$ and all $1 \leq j \leq s$, the Cauchy-Schwarz inequality yields
$$\left \| \sum_{w \in \mathcal B_n, \mathcal T(w) = v} \zeta_{w, g} \, E_M(a \widetilde{\mathcal J} b^* \widetilde{\mathcal J} \theta_t(w \otimes \delta_g)) \right\|_2^2 \leq \rho_t^{2(n - r - s)} \sum_{u \in \mathcal B_{r + s}} |\zeta_{\iota_v(u), g}|^2.$$

Altogether, we finally obtain
\begin{align*}
\| E_M(a \widetilde{\mathcal J} b^* \widetilde{\mathcal J} \theta_t(\zeta)) \|_2^2 &= \sum_{g \in G} \sum_{v \in \mathcal B_{n - r - s}} \left \| \sum_{w \in \mathcal B_n, \mathcal T(w) = v} \zeta_{w, g} \, E_M(a \widetilde{\mathcal J} b^* \widetilde{\mathcal J} \theta_t(w \otimes \delta_g)) \right\|_2^2 \\
&\leq \rho_t^{2(n - r - s)} \sum_{g \in G} \sum_{v \in \mathcal B_{n - r - s}} \left( \sum_{w \in \mathcal B_n, \mathcal T(w) = v} |\zeta_{w, g}|^2 \right) \\
&= \rho_t^{2(n - r - s)} \|\zeta\|_2^2.
\end{align*}

Recall that $\kappa_n = \sup \left \{ \| E_M(a \widetilde{\mathcal J} b^* \widetilde{\mathcal J} \theta_t(\zeta)) \|_2 : \zeta \in \mathcal H_n \otimes \ell^2(G), \|\zeta\|_2 \leq 1 \right \}$. We get $\kappa_n \leq \rho_t^{n - r - s}$. Since $t \in (-1, 0) \cup (0, 1)$, we have $0 \leq \rho_t < 1$, whence $\lim_{n \to \infty} \rho_t^{n - r - s} = 0$ and so $\lim_{n \to \infty} \kappa_n = 0$. This finishes the proof of Lemma \ref{technical-lemma}.
\end{proof}

\begin{proof}[Proof of Theorem $\ref{convergence}$]
Observe that using a combination of Proposition $\ref{wick}$ and Kaplansky's density theorem, it suffices to show that $\lim_{k} \|E_M(a \theta_t(x_k) b)\|_2 = 0$ for:
\begin{itemize}
\item $a = 1$ or $a = W(\xi_1 \otimes \cdots \otimes \xi_r)$ a word of length $r \geq 1$ in $\Gamma(H_\R \oplus H_\R)\dpr$ with letters $\xi_i$ in $H \oplus 0$ or $0 \oplus H$ and such that $\xi_1 \in 0 \oplus H$.

\item $b = 1$ or $b = W(\eta_1 \otimes \cdots \otimes \eta_s)$ a word of length $s \geq 1$ in $\Gamma(H_\R \oplus H_\R)\dpr$ with letters $\eta_j$ in $H \oplus 0$ or $0 \oplus H$ and such that $\eta_s \in 0 \oplus H$.
\end{itemize}

Write $x_k = \sum_{n \in \N} \xi_{k, n}$ with $\xi_{k, n} \in \mathcal H_n \otimes \ell^2(G)$. We then have
$\tau(\theta_t(x_k) x_k^*) = \sum_{n \in \N} \rho_t^n \|\xi_{k, n}\|_2^2$. Since $\lim_{k} \tau(\theta_t(x_k) x_k^*) = 0$ and $0 < \rho_t < 1$, we get $\lim_{k} \|\xi_{k, n}\|_2 = 0$ for all $n \in \N$. Put
$$\kappa_n = \sup \left \{ \| E_M(a \widetilde{\mathcal J} b^* \widetilde{\mathcal J} \theta_t(\zeta)) \|_2 : \zeta \in \mathcal H_n \otimes \ell^2(G), \|\zeta\|_2 \leq 1 \right \}.$$

Recall that by $(\ref{orthogonal-vectors})$ in the proof of Lemma $\ref{technical-lemma}$, we have 
$$E_M \left(a \widetilde{\mathcal J} b^* \widetilde{\mathcal J} \theta_t \left( \mathcal H_n \otimes \ell^2(G) \right) \right) \subset \mathcal H_{n - r - s} \otimes \ell^2(G)$$ 
for all $n \geq r + s + 1$. This implies that for every $k$, the vectors $\left( E_M (a \widetilde{\mathcal J} b^* \widetilde{\mathcal J} \theta_t ( \xi_{k,n} ) ) \right)_{n \geq {r + s + 1}}$ are pairwise orthogonal in $\LL^2(M)$. 

For all $k$, we get 
\begin{align*}
\|E_M(a \theta_t(x_k) b)\|_2^2 &= \left\| \sum_{n \leq r + s} E_M(a \widetilde{\mathcal J} b^* \widetilde{\mathcal J} \theta_t(\xi_{k, n})) + \sum_{n \geq r + s + 1} E_M(a \widetilde{\mathcal J} b^* \widetilde{\mathcal J} \theta_t(\xi_{k, n}) ) \right\|_2^2 \\
&\leq 2 \left\| \sum_{n \leq r + s} E_M(a \widetilde{\mathcal J} b^* \widetilde{\mathcal J} \theta_t(\xi_{k, n}) )) \right\|_2^2 + 2 \left\| \sum_{n \geq r + s + 1} E_M(a \widetilde{\mathcal J} b^* \widetilde{\mathcal J} \theta_t(\xi_{k, n}) ) \right\|_2^2 \\
&= 2 \left\| \sum_{n \leq r + s} E_M(a \widetilde{\mathcal J} b^* \widetilde{\mathcal J} \theta_t(\xi_{k, n}) )) \right\|_2^2 + 2 \sum_{n \geq r + s + 1} \left \| E_M(a \widetilde{\mathcal J} b^* \widetilde{\mathcal J} \theta_t(\xi_{k, n}) ) \right\|_2^2 \\
&\leq  2 \left\| \sum_{n \leq r + s} E_M(a \widetilde{\mathcal J} b^* \widetilde{\mathcal J} \theta_t(\xi_{k, n}) )) \right\|_2^2 + 2 \sum_{n \geq r + s + 1} \kappa_n^2 \|\xi_{k, n}\|_2^2. 
\end{align*}

Let $\varepsilon > 0$. Since $\lim_{n \to \infty} \kappa_n = 0$ by Lemma \ref{technical-lemma}, there exists $n_0 \geq r + s + 1$ such that $\kappa_n \leq \varepsilon / 2$ for all $n \geq n_0$. Since moreover $\lim_{k} \|\xi_{k, n}\|_2 = 0$ for all $n \in \N$, there exists $k_0$ such that for all $k \geq k_0$, we have 
$$2 \left\| \sum_{n \leq r + s} E_M(a \widetilde{\mathcal J} b^* \widetilde{\mathcal J} \theta_t(\xi_{k, n}) )) \right\|_2^2 + 2 \sum_{r + s + 1 \leq n \leq n_0 - 1} \kappa_n^2 \|\xi_{k, n}\|_2^2 \leq \frac{\varepsilon^2}{2}.$$
For all $k \geq k_0$, we obtain
\begin{align*}
\|E_M(a \theta_t(x_k) b)\|_2^2 &\leq \frac{\varepsilon^2}{2} +  2 \sum_{n \geq n_0} \kappa_n^2 \|\xi_{k, n}\|_2^2 \\
&\leq \frac{\varepsilon^2}{2} + \frac{\varepsilon^2}{2} \sum_{n \geq n_0} \|\xi_{k, n}\|_2^2 \\
& \leq  \frac{\varepsilon^2}{2} + \frac{\varepsilon^2}{2} \|x_k\|_2^2 \leq \varepsilon^2.
\end{align*}
This shows that $\lim_{k} \|E_M(a \theta_t(x_k) b)\|_2 = 0$ and finishes the proof of Theorem \ref{convergence}.
\end{proof}

\section{(Weakly) mixing inclusions in ${\rm II_1}$ factors $\Gamma(H_\R)\dpr \rtimes_\pi G$}

Let $P \subset Q$ be an inclusion of von Neumann algebras. Following \cite[Section 1.4.2]{Po01}, the {\em quasi-normalizer of} $P$ {\em inside} $Q$, denoted by $\QN_Q(P)$, is the set of all $x \in Q$ for which there exist $y_1, \dots, y_k \in Q$ such that 
$$x P \subset \sum_{i = 1}^k P y_i \; \mbox{ and } \; P x \subset \sum_{i = 1}^k y_i P.$$
One checks that $\QN_Q(P)$ is a unital $\ast$-subalgebra of $Q$ such that $P \vee (P' \cap Q) \subset \QN_Q(P)$. We say that $P$ is {\em quasi-regular inside} $Q$ if $\QN_Q(P)\dpr = Q$. Moreover by \cite[Lemma 3.5]{popa-malleable1}, for all projections $p \in P$ and $q \in P' \cap Q$, we have $pq \QN_Q(P)\dpr pq = \QN_{pq Q pq}(p Pq p)\dpr$.

\subsection{Weakly mixing inclusions in $\Gamma(H_\R)\dpr \rtimes_\pi G$}

The following definition is due to Popa and Vaes (see \cite[Definition 6.13]{popa-vaes-advances}).

\begin{df}
Let $A \subset N \subset (M, \tau)$ be tracial von Neumann algebras. We say that the inclusion $N \subset M$ is {\em weakly mixing through} $A$ if there exists a net of unitaries $u_k \in \mathcal U(A)$ such that 
$$\lim_k \|E_N(x u_k y)\|_2 = 0, \forall x, y \in M\ominus N.$$ 
\end{df}

The following result will be useful in order to prove Theorem \ref{thmD}. Recall that an orthogonal representation $\pi : G \to \mathcal O(H_\R)$ is {\em compact} if $\pi$ is the direct sum of finite dimensional orthogonal representations.

\begin{prop}\label{weak-mixing-through}
Let $G$ be any countable discrete group and $\pi : G \to \mathcal O(H_\R)$ any orthogonal representation. Denote by $K_\R \subset H_\R$ the unique closed $\pi(G)$-invariant subspace such that $\pi_K = \pi | K_\R$ is weakly mixing and $\pi_{H \ominus K} = \pi | H_\R \ominus K_\R$ is compact. Put $M = \Gamma(H_\R)\dpr \rtimes_\pi G$ and $N = \Gamma(H_\R \ominus K_\R)\dpr \rtimes_{\pi_{H \ominus K}} G$.

Then the inclusion $N  \subset M$ is weakly mixing through $\LL(G)$.
\end{prop}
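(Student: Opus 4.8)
The plan is to produce the required net of unitaries directly from the canonical unitaries $u_g \in \LL(G)$ and to compute $E_N$ explicitly using the word calculus of Proposition \ref{wick}. Write $L_\R = H_\R \ominus K_\R$ and $L = H \ominus K$ for its complexification, so that $H = K \oplus L$ is a $\pi(G)$-invariant orthogonal decomposition and $\Gamma(H_\R)\dpr = \Gamma(K_\R)\dpr \ast \Gamma(L_\R)\dpr$. Under the identification $\LL^2(M) = \mathcal F(H)\otimes \ell^2(G)$, the conditional expectation $E_N$ is the orthogonal projection onto $\mathcal F(L)\otimes \ell^2(G)$, so that $E_N(W(\zeta)u_c) = W(P_{\mathcal F(L)}\zeta)u_c$, where $P_{\mathcal F(L)}$ acts degreewise as $P_L^{\otimes n}$ on $H^{\otimes n}$. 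If $K_\R = 0$ then $N = M$ and there is nothing to prove, so assume $K_\R \neq 0$; then $\pi_K$ is a nonzero weakly mixing representation and in particular $G$ is infinite. Using separability of $K$ and weak mixing of $\pi_K$ (together with a diagonal argument over a countable dense subset), I would choose a sequence $g_k \to \infty$ in $G$ such that $\langle \pi_K(g_k)\xi, \eta\rangle \to 0$ for all $\xi, \eta \in K$. The unitaries $u_k = u_{g_k} \in \mathcal U(\LL(G))$ will be the witnesses of weak mixing through $\LL(G)$.

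It suffices to verify $\lim_k \|E_N(x u_{g_k} y)\|_2 = 0$ on a total subset of $M \ominus N$, the reduction being justified by the uniform bound $\|E_N(z u_{g_k} w)\|_2 \le \min(\|z\|_2\|w\|_\infty, \|z\|_\infty\|w\|_2)$ for $z,w \in M$ and by bilinearity. A convenient total set consists of $x = W(\xi)u_a$ and $y = W(\eta)u_b$, where $\xi = c_1 \otimes \cdots \otimes c_p$ and $\eta = d_1 \otimes \cdots \otimes d_q$ are words each of whose letters lies purely in $K$ or purely in $L$, and where at least one letter of $\xi$ and at least one letter of $\eta$ lies in $K$ (this last condition is exactly $E_N(x) = E_N(y) = 0$). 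Using the crossed product relations and the Bogoljubov action, $x u_{g_k} y = W(\xi)\, W(\pi(ag_k)\eta)\, u_{ag_kb}$, where $\pi(ag_k)$ acts letterwise, so $\|E_N(x u_{g_k}y)\|_2 = \|P_{\mathcal F(L)}\zeta_k\|$ with $\zeta_k = W(\xi)\,W(\pi(ag_k)\eta)\,\Omega$.

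The core computation is to expand $\zeta_k$ by iterating the multiplication formula of Proposition \ref{wick}(2):
$$\zeta_k = \sum_{j = 0}^{\min(p,q)} \Big(\prod_{i=1}^{j}\langle \overline{c_{p-i+1}},\, \pi(ag_k)d_i\rangle\Big)\, c_1 \otimes \cdots \otimes c_{p-j}\otimes \pi(ag_k)d_{j+1}\otimes \cdots \otimes \pi(ag_k)d_q.$$
Applying $P_{\mathcal F(L)} = P_L^{\otimes\bullet}$ and using that $P_L$ kills every $K$-letter, a fixed summand survives only if $c_1, \dots, c_{p-j} \in L$; since $\xi$ carries a $K$-letter, say $c_{i_0} \in K$, survival forces $j \ge p - i_0 + 1$, so the contraction product contains the factor $\langle \overline{c_{i_0}},\, \pi(ag_k)d_{p-i_0+1}\rangle$. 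Writing this factor as $\langle \pi_K(a)^*\overline{c_{i_0}},\, \pi(g_k)d_{p-i_0+1}\rangle$ (here $\overline{c_{i_0}}\in K$ since $K = K_\R\otimes\C$) and decomposing $d_{p-i_0+1}$ along $K \oplus L$, only the $K$-component contributes, yielding a matrix coefficient of $\pi_K$ evaluated at $g_k$, which tends to $0$ by the choice of $g_k$; the factor is identically $0$ if $d_{p-i_0+1} \in L$. The remaining contraction factors are bounded uniformly in $k$ and the surviving words have norm $\le 1$, so each summand tends to $0$ in norm. Since summands indexed by distinct $j$ have distinct lengths $p + q - 2j$, they are mutually orthogonal, and as there are finitely many of them we obtain $\|P_{\mathcal F(L)}\zeta_k\| \to 0$, hence $\lim_k \|E_N(x u_{g_k}y)\|_2 = 0$ on the total set.

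The main obstacle is precisely the bookkeeping in this last step: one must verify that in \emph{every} summand surviving the projection $P_{\mathcal F(L)}$ there is a contraction pairing involving the $K$-letter of $\xi$, so that weak mixing of $\pi_K$ can be brought to bear. The orthogonal decomposition $H = K \oplus L$ is what makes this manageable, since it forces all ``mixed'' $K$-versus-$L$ pairings to vanish outright and confines the genuine analytic input to the single decaying matrix coefficient of $\pi_K$.
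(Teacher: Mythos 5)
Your proof is correct and follows essentially the same route as the paper: the same witnesses $u_{g_k}$ coming from weak mixing of $\pi_K$, the same Kaplansky-type reduction to words with letters in $K$ or $H \ominus K$, and the same iterated application of Proposition \ref{wick}(2) to compute $E_N$. The only difference is organizational: the paper first normalizes so that the first letter of $x$ and the last letter of $y$ lie in $K$ (one factors off the leading/trailing $N$-words using the $N$-bimodularity of $E_N$), which collapses the whole expansion to the single term $\delta_{r=s}\prod_{i}\langle \overline{\xi}_{r-i+1}, \pi(g_k)\eta_i\rangle\, u_{g_k}$, whereas you allow the $K$-letters in arbitrary positions and instead track which contraction pairing is forced to involve the $K$-letter of $\xi$ --- both bookkeeping schemes are valid.
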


\begin{proof}
As usual, we denote by $H$ (resp.\ $K$) the complexified Hilbert space of $H_\R$ (resp.\ $K_\R$). If $K_\R = 0$, then $N = M$ and the inclusion $M \subset M$ is trivially weakly mixing through $\LL(G)$. Thus, we may and will assume that $K_\R \neq 0$.

Since $\pi_K$ is weakly mixing, there exists a sequence $g_n \in G$ such that $\lim_n \langle \pi(g_n) \xi, \eta\rangle = 0$ for all $\xi, \eta \in K$. Observe that by Kaplansky's density theorem, in order to show that the inclusion $N  \subset M$ is weakly mixing through $\LL(G)$, it suffices to show that $\lim_n \|E_N (x u_{g_n} y) \|_2 = 0$ for all $x, y \in M \ominus N$ words of the form $x = W(\xi_1 \otimes \cdots \otimes \xi_r)$ and $y = W(\eta_1 \otimes \cdots \otimes \eta_s)$ with $r, s \geq 1$, letters $\xi_i, \eta_j$ in $K$ or $H \ominus K$ and $\xi_1, \eta_s \in K$.

Applying repeatedly Proposition $\ref{wick}$ together with the fact that $\xi_1, \eta_s \in K$, we have 
\begin{align*}
E_N (x u_{g_n} y) &= E_N \left( W(\xi_1 \otimes \cdots \otimes \xi_r) u_{g_n} W(\eta_1 \otimes \cdots \otimes \eta_s) \right) \\
&= E_N \left( W(\xi_1 \otimes \cdots \otimes \xi_r) W(\pi(g_n)\eta_1 \otimes \cdots \otimes \pi(g_n)\eta_s) \right) u_{g_n} \\
&= \delta_{r = s} \prod_{i = 1}^r \langle \overline \xi_{r - i + 1}, \pi(g_n) \eta_i \rangle u_{g_n}.
\end{align*}
Since $\xi_1, \eta_s \in K$, we have $\lim_n \langle \overline \xi_1, \pi(g_n)\eta_s \rangle = 0$, whence $\lim_n \|E_N (x u_{g_n} y)\|_2 = 0$.
\end{proof}

\begin{cor}\label{weak-mixing-consequence}
Let $G$ be any countable discrete group and $\pi : G \to \mathcal O(H_\R)$ any orthogonal representation. Denote by $K_\R \subset H_\R$ the unique closed $\pi(G)$-invariant subspace such that $\pi_K = \pi | K_\R$ is weakly mixing and $\pi_{H \ominus K} = \pi | H_\R \ominus K_\R$ is compact. Put $M = \Gamma(H_\R)\dpr \rtimes_\pi G$ and $N = \Gamma(H_\R \ominus K_\R)\dpr \rtimes_{\pi_{H \ominus K}} G$.

Whenever $x \in M$ satisfies $\LL(G) x \subset \sum_{i = 1}^k y_i N$ for some finite subset $\{y_1, \dots, y_k\} \subset M$, then $x \in N$. In particular, $\QN_M(\LL(G))\dpr \subset N$.
\end{cor}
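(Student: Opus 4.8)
The plan is to deduce the statement directly from the weak mixing property established in Proposition \ref{weak-mixing-through}. Suppose $x \in M$ satisfies $\LL(G) x \subset \sum_{i=1}^k y_i N$ for some finite set $\{y_1, \dots, y_k\} \subset M$. First I would decompose $x$ orthogonally with respect to the inclusion $N \subset M$, writing $x = E_N(x) + x_0$ where $x_0 = x - E_N(x) \in M \ominus N$. Since $E_N(x) \in N$ trivially satisfies the desired conclusion, it suffices to show that $x_0 = 0$, i.e.\ that the ``off-$N$'' part must vanish.

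The key step is to test against the net of unitaries $u_k \in \mathcal U(\LL(G))$ provided by Proposition \ref{weak-mixing-through}, which satisfies $\lim_k \|E_N(a u_k b)\|_2 = 0$ for all $a, b \in M \ominus N$. For each $i$, write $y_i = E_N(y_i) + (y_i)_0$ with $(y_i)_0 \in M \ominus N$. The hypothesis $\LL(G) x \subset \sum_i y_i N$ gives, for every $k$, that $u_k x \in \sum_i y_i N$, and hence $E_{(M \ominus N)}(u_k x)$ lies in $\sum_i (y_i)_0 N + (\text{terms already in } N)$; more precisely, projecting onto $M \ominus N$ and using that $u_k \in \LL(G) \subset N$ normalizes $N$ on the left, I would compute $E_N(u_k^* a^* u_k x)$ for a suitable $a \in M \ominus N$ and show it is controlled by the quantities $\|E_N((y_i)_0 z_i)\|$ which tend to $0$.

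Concretely, the cleaner route is this: since $u_k x \in \sum_i y_i N$, there exist $z_i^{(k)} \in N$ (from the module structure) with $u_k x = \sum_i y_i z_i^{(k)}$, and the $\LL^2$-norms $\|z_i^{(k)}\|_2$ are uniformly bounded because $\|u_k x\|_2 = \|x\|_2$ and the $y_i$ span a fixed finite-dimensional right $N$-module. Applying $E_N(x_0^* \, \cdot\,)$ and using $u_k \in N$, one gets
\begin{equation*}
\|x_0\|_2^2 = \tau(x_0^* x_0) = \langle u_k x_0, u_k x_0 \rangle = \left\langle E_{M \ominus N}(u_k x),\, u_k x_0 \right\rangle,
\end{equation*}
and the right-hand side decomposes into a sum of inner products of the form $\langle (y_i)_0 z_i^{(k)},\, u_k x_0\rangle$, each of which is bounded by a constant times $\|E_N((y_i)_0^{*} u_k x_0)\|_2 \to 0$ as $k \to \infty$ by weak mixing through $\LL(G)$. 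Hence $\|x_0\|_2 = 0$, so $x \in N$. The ``in particular'' clause is then immediate: if $x \in \QN_M(\LL(G))$ then by definition $\LL(G) x \subset \sum_i \LL(G) y_i \subset \sum_i y_i' N$ (absorbing $\LL(G) \subset N$), so $x \in N$, and taking the von Neumann algebra generated gives $\QN_M(\LL(G))\dpr \subset N$ since $N$ is weakly closed.

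The main obstacle I anticipate is the bookkeeping in the second step: extracting the elements $z_i^{(k)} \in N$ with uniformly bounded $\LL^2$-norm from the module containment $\LL(G) x \subset \sum_i y_i N$, and ensuring the cross terms in the $\LL^2$-inner product expansion are each genuinely of the form $E_N(a u_k b)$ with $a, b \in M \ominus N$ so that Proposition \ref{weak-mixing-through} applies. Handling the possibility that some $(y_i)_0 \neq 0$ while $E_N(y_i) \neq 0$ simultaneously requires care, but splitting each $y_i$ into its $N$-part and $(M \ominus N)$-part and noting the $N$-part contributes only terms landing in $N$ (hence orthogonal to $x_0$ after the projection) should resolve this cleanly.
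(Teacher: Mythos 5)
Your strategy is the right one, and it is in fact the standard proof of the abstract statement that the paper itself invokes: the paper's proof of this corollary is a one-line citation of Proposition \ref{weak-mixing-through} together with \cite[Proposition 6.14]{popa-vaes-advances}, so what you are really doing is reproving that cited proposition. Several of your steps check out: writing $x_0 = x - E_N(x)$; observing that $u_k x_0 = (1-E_N)(u_k x) = \sum_i (y_i)_0 z_i^{(k)}$ because $u_k \in \LL(G) \subset N$ and $E_N(y_i z) = E_N(y_i)z$ for $z \in N$; and bounding each term by $|\langle (y_i)_0 z_i^{(k)}, u_k x_0\rangle| = |\langle z_i^{(k)}, E_N((y_i)_0^* u_k x_0)\rangle| \leq \|z_i^{(k)}\|_2 \, \|E_N((y_i)_0^* u_k x_0)\|_2$, where Proposition \ref{weak-mixing-through} applies since $(y_i)_0^*, x_0 \in M \ominus N$. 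The ``in particular'' clause is also fine once the quasi-normalizer condition is quoted on the correct side: the definition gives $\LL(G)x \subset \sum_i y_i \LL(G) \subset \sum_i y_i N$, not $\sum_i \LL(G) y_i$.

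The genuine gap is the assertion that the coefficients can be chosen with $\sup_k \|z_i^{(k)}\|_2 < \infty$ ``because $\|u_k x\|_2 = \|x\|_2$ and the $y_i$ span a fixed finite-dimensional right $N$-module.'' The module $\sum_i y_i N$ is finitely \emph{generated}, not finite dimensional, and no such uniform bound holds for a fixed, non-orthonormalized generating set: the synthesis map $\LL^2(N)^{\oplus k} \to \LL^2(M)$, $(z_i)_i \mapsto \sum_i y_i z_i$, need not have closed range, so elements of the algebraic module of small $\|\cdot\|_2$-norm may only admit decompositions with arbitrarily large coefficients. (Already for $k=1$: if $a$ is an injective positive element with diffuse spectrum accumulating at $0$ and $z_\varepsilon = \chi_{[0,\varepsilon]}(a)$, then $az' = az_\varepsilon$ forces $z' = z_\varepsilon$, while $\|az_\varepsilon\|_2 \leq \varepsilon\|z_\varepsilon\|_2$.) Since your final inequality is $\|x_0\|_2^2 \leq \sum_i \|z_i^{(k)}\|_2\,\|E_N((y_i)_0^* u_k x_0)\|_2$, the decay of the second factors proves nothing without this bound, and nothing links the possible growth of $\|z_i^{(k)}\|_2$ to that decay. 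This is exactly where the proof of \cite[Proposition 6.14]{popa-vaes-advances} uses real machinery: one first replaces the generators $(y_i)_0$ by an $N$-orthonormal family, i.e.\ vectors $\eta_1, \dots, \eta_m \in \LL^2(M) \ominus \LL^2(N)$ with $E_N(\eta_j^* \eta_l) = \delta_{jl}p_j$ for projections $p_j \in N$ (suitably interpreted for $\LL^2$-vectors) and $\overline{\sum_i (y_i)_0 N} = \bigoplus_j \overline{\eta_j N}$ --- equivalently, one works with the finite-trace projection onto this module inside the basic construction $\langle M, e_N\rangle$. Then $\|x_0\|_2^2 = \|u_k x_0\|_2^2 = \sum_j \|E_N(\eta_j^* u_k x_0)\|_2^2$ with no coefficients left to control, and one concludes by approximating each $\eta_j$ in $\|\cdot\|_2$ by elements of $M \ominus N$, the error being uniform in $k$ because $\|u_k x_0\|_\infty \leq \|x_0\|_\infty$. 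With that lemma supplied --- or by simply citing \cite[Proposition 6.14]{popa-vaes-advances}, as the paper does --- your argument closes; as written, it does not.
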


\begin{proof}
This is a straightforward consequence of Proposition \ref{weak-mixing-through} and \cite[Proposition 6.14]{popa-vaes-advances}.
\end{proof}

\subsection{Mixing inclusions in $\Gamma(H_\R)\dpr \rtimes_\pi G$}

This next definition is motivated by Popa's result \cite[Theorem 3.1]{popa-malleable1} (see also \cite[Definition 9.1]{ioana-cartan}).

\begin{df}
Let $B \subset (M, \tau)$ be tracial von Neumann algebras. We say that the inclusion $B \subset M$ is {\em mixing} if whenever $b_k \in (B)_1$ is a net such that $b_k \to 0$ weakly, we have
$$\lim_k \|E_B(x b_k y)\|_2 = 0, \forall x, y \in M \ominus B.$$
\end{df}

If $G \curvearrowright (B, \tau)$ is a trace-preserving mixing action of a countable discrete group on a tracial von Neumann algebra, then the inclusion $\LL(G) \subset B \rtimes G$ is mixing. For other examples, we refer to \cite[Section 9.3]{ioana-cartan} and the references therein. 

\begin{rem}\label{remark-mixing}
Let $B \subset (M, \tau)$ be a mixing inclusion of tracial von Neumann algebras.
\begin{enumerate}
\item For all $k \geq 1$ and all projections $p \in \mathbf M_k(B)$, the inclusion $p\mathbf M_k(B) p \subset p \mathbf M_k(M) p$ is mixing.
\item Let $A \subset B$ be any diffuse von Neumann subalgebra. Then the inclusion $B \subset M$ is weakly mixing through $A$.
\end{enumerate}
\end{rem}

The aim of this section is to prove the following result that will be needed in the proof of Theorem \ref{thmE}.

\begin{prop}\label{mixing-inclusion}
Let $G$ be any countable discrete group and $\pi : G \to \mathcal O(H_\R)$ any orthogonal representation. Let $K_\R \subset H_\R$ be a nonzero closed $\pi(G)$-invariant subspace such that $\pi | K_\R$ is mixing. Put $\pi_{H \ominus K} = \pi | H_\R \ominus K_\R$, $M = \Gamma(H_\R)\dpr \rtimes_\pi G$ and $N = \Gamma(H_\R \ominus K_\R)\dpr \rtimes_{\pi_{H \ominus K}}G$.

Then the inclusion $N \subset M$ is mixing. 
\end{prop}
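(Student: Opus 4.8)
The plan is to run the Wick calculus together with the mixing of $\pi | K_\R$, much as in the proof of Proposition \ref{weak-mixing-through}. Throughout write $\mathcal B = \Gamma(H_\R \ominus K_\R)\dpr$, so that $\Gamma(H_\R)\dpr = \Gamma(K_\R)\dpr \ast \mathcal B$ by freeness of orthogonal subspaces, and $N = \mathcal B \rtimes G$. The starting point is that, under the identification $\LL^2(M) = \mathcal F(H) \otimes \ell^2(G)$, the conditional expectation $E_N$ is just the orthogonal projection onto $\mathcal F(H \ominus K) \otimes \ell^2(G)$; writing $P : H \to H \ominus K$ for the orthogonal projection, this reads $E_N(W(e_1 \otimes \cdots \otimes e_n) u_g) = W(P e_1 \otimes \cdots \otimes P e_n) u_g$. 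In particular $W(e_1 \otimes \cdots \otimes e_n) u_g \in M \ominus N$ exactly when some letter $e_i$ lies in $K$, and $E_N$ restricted to $\Gamma(H_\R)\dpr$ is the free-product expectation $E_{\mathcal B}$.

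First I would reduce, via Kaplansky's density theorem and the $N$-bimodularity of $E_N$, to words of a very specific shape. Given $W(\xi_1 \otimes \cdots \otimes \xi_r) u_g \in M \ominus N$, let $i$ and $j$ be the first and last indices with $\xi_i, \xi_j \in K$. Since the inner product of a vector of $K$ with a vector of $H \ominus K$ vanishes, Proposition \ref{wick}(2) factors off the $H \ominus K$-prefix $W(\xi_1 \otimes \cdots \otimes \xi_{i-1})$ and $H \ominus K$-suffix $W(\xi_{j+1} \otimes \cdots \otimes \xi_r)$, both lying in $\mathcal B \subset N$, with no correction terms. Pulling the prefixes out of $E_N$ by bimodularity and absorbing the suffixes together with the group elements into the net (which stays bounded and weakly null), it suffices to treat $x = W(\xi_1 \otimes \cdots \otimes \xi_r)$ and $y = W(\eta_1 \otimes \cdots \otimes \eta_s)$ with unit letters whose \emph{first and last letters all lie in $K$}, and $b_k \in (N)_1$ with $b_k \to 0$ weakly.

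Next I would carry out the core computation. Writing $b_k = \sum_{h} b_k^h u_h$ with $b_k^h \in \mathcal B$ and using $u_h y = W(\pi(h)\eta_1 \otimes \cdots \otimes \pi(h)\eta_s) u_h$, orthogonality of the $u_h$ gives
\begin{equation*}
\|E_N(x b_k y)\|_2^2 = \sum_{h \in G} \left\| E_{\mathcal B}\big( W(\xi_1 \otimes \cdots \otimes \xi_r)\, b_k^h\, W(\pi(h)\eta_1 \otimes \cdots \otimes \pi(h)\eta_s) \big) \right\|_2^2 .
\end{equation*}
Two applications of Proposition \ref{wick} do the work. As $K_\R$ is $\pi(G)$-invariant we have $\pi(h)\eta_1, \pi(h)\eta_s \in K$; since $\xi_r, \pi(h)\eta_1 \in K$ while $b_k^h$ has all letters in $H \ominus K$, no contraction occurs at either junction with $b_k^h$, so every letter of $b_k^h$ survives in a word still carrying the $K$-letter $\xi_1$, which $E_{\mathcal B}$ annihilates. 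Thus only the scalar part $\tau(b_k^h)$ contributes, and the two-word expansion of $W(\xi_1 \otimes \cdots \otimes \xi_r) W(\pi(h)\eta_1 \otimes \cdots \otimes \pi(h)\eta_s)$, together with $\xi_1, \eta_s \in K$, leaves only the fully contracted term. This yields
\begin{equation*}
\|E_N(x b_k y)\|_2^2 = \sum_{h \in G} |\tau(b_k^h)|^2\, F_h , \qquad F_h = \delta_{r = s} \left| \prod_{i=1}^{r} \langle \overline{\xi_{r-i+1}}, \pi(h)\eta_i \rangle \right|^2 .
\end{equation*}

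Finally I would combine two independent inputs. On one hand $\sum_h |\tau(b_k^h)|^2 = \|E_{\LL(G)}(b_k)\|_2^2 \leq 1$, and $\tau(b_k^h) \to 0$ as $k \to \infty$ for each fixed $h$, since $b_k \to 0$ weakly. On the other hand $0 \leq F_h \leq 1$ and $F_h \to 0$ as $h \to \infty$: when $r = s$ the factor $\langle \overline{\xi_1}, \pi(h)\eta_s \rangle$ tends to $0$ because $\xi_1, \eta_s \in K$ and $\pi | K_\R$ is mixing. Given $\varepsilon > 0$, pick a finite $S \subset G$ with $F_h \leq \varepsilon$ off $S$; then $\sum_{h \notin S} |\tau(b_k^h)|^2 F_h \leq \varepsilon$ uniformly in $k$, while $\sum_{h \in S} |\tau(b_k^h)|^2 F_h \to 0$ as a finite sum of terms each tending to $0$, whence $\limsup_k \|E_N(x b_k y)\|_2^2 \leq \varepsilon$. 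The main subtlety to flag is precisely this last step: weak nullity of $b_k$ alone does \emph{not} force $\sum_h |\tau(b_k^h)|^2 \to 0$ (take $b_k = u_{h_k}$ with $h_k \to \infty$), so the mixing of $\pi | K_\R$, encoded in the decay $F_h \to 0$, is indispensable for absorbing the Fourier mass of $b_k$ that escapes to infinity in $G$.
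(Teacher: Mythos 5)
Your proof is correct, but it takes a genuinely different route from the paper's. For this proposition the paper performs no Fock-space computation at all: it first proves a general transfer lemma for amalgamated free products (Proposition \ref{mixing-afp}: if $B \subset M_2$ is mixing, then $M_1 \subset M_1 \ast_B M_2$ is mixing, obtained in a few lines by writing $E_{M_1}(x b_k y)$ for reduced words $x, y$ and iterating $E_B$ across the letters adjacent to $b_k$), and then invokes the decomposition $M = N \ast_{\LL(G)} \left( \Gamma(K_\R)\dpr \rtimes_{\pi_K} G \right)$ together with the fact that $\LL(G) \subset \Gamma(K_\R)\dpr \rtimes_{\pi_K} G$ is mixing, since the free Bogoljubov action of the mixing representation $\pi_K$ is mixing (Proposition \ref{mixing-action}). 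Your argument unfolds the same underlying mechanism concretely: after reducing to words whose extreme letters lie in $K$, the identity $E_{\mathcal B}(W(\xi)\, b\, W(\zeta)) = \tau(b)\, E_{\mathcal B}(W(\xi) W(\zeta))$ shows that only the $\LL(G)$-Fourier mass $\left( \tau(b_k^h) \right)_{h \in G}$ of $b_k$ survives the expectation, and your splitting of $G$ into a finite set (killed by weak nullity) and its complement (killed by the decay $F_h \to 0$ coming from mixing of $\pi | K_\R$) finishes; this is exactly the role that mixing of $\LL(G)$ inside $\Gamma(K_\R)\dpr \rtimes_{\pi_K} G$ plays in the paper's version, and your closing example $b_k = u_{h_k}$ identifies correctly why it is indispensable. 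What the paper's packaging buys is a reusable structural statement about arbitrary amalgamated free products (stated and proved independently of the free Gaussian setting); what yours buys is a self-contained, quantitative proof needing only Proposition \ref{wick}, closely parallel to the paper's own proof of Proposition \ref{weak-mixing-through}. One small imprecision worth noting: your claim that $W(e_1 \otimes \cdots \otimes e_n) u_g \in M \ominus N$ exactly when some letter lies in $K$ is accurate only for words whose letters already lie in $K \cup (H \ominus K)$; for general letters one must first expand multilinearly, which your subsequent reduction implicitly does, so nothing breaks.
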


Proposition \ref{mixing-inclusion} will be a consequence of the following more general result about  mixing inclusions in tracial amalgamated free product von Neumann algebras.

\begin{prop}\label{mixing-afp}
For all $i \in \{1, 2\}$, let $B \subset (M_i, \tau_i)$ be an inclusion of tracial von Neumann algebras. Assume that $\tau_1|_B = \tau_2 |_B$ and denote by $(M, \tau) = (M_1 \ast_B M_2, \tau_1 \ast_B \tau_2)$ the corresponding tracial amalgamated free product von Neumann algebra. If the inclusion $B \subset M_2$ is mixing, so is the inclusion $M_1 \subset M$.
\end{prop}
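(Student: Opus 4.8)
The plan is to exploit the reduced-word description of $M = M_1 \ast_B M_2$. Recall that $\LL^2(M) \ominus \LL^2(M_1)$ is the closed linear span of the reduced words $m_1 m_2 \cdots m_\ell$ with letters $m_j \in M_{i_j} \ominus B$ and alternating indices $i_1 \neq i_2 \neq \cdots \neq i_\ell$ that are not single $M_1$-letters; that $E_{M_1}$ annihilates every reduced word of length $\geq 2$ as well as every single letter in $M_2 \ominus B$; and that $E_{M_1}$ restricts to $E_B$ on $M_2$. Using Kaplansky's density theorem together with the uniform bound on the net, a standard $\varepsilon/3$-argument reduces the statement to the following: for all reduced words $x, y \in M \ominus M_1$ and every \emph{bounded} net $c_k \in M_1$ with $c_k \to 0$ weakly, one has $\lim_k \|E_{M_1}(x c_k y)\|_2 = 0$. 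Passing from the unit ball $(M_1)_1$ to arbitrary bounded weakly null nets in $M_1$ is what will allow the induction below to close. I would prove this by induction on the total length $\ell(x) + \ell(y)$.

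For the base case $\ell(x) = \ell(y) = 1$ both $x = m$ and $y = n$ are single letters in $M_2 \ominus B$. Writing $c_k = E_B(c_k) + (c_k - E_B(c_k))$, the second summand lies in $M_1 \ominus B$ and produces the length-three reduced word $m (c_k - E_B(c_k)) n$, which $E_{M_1}$ kills; the first summand gives $E_{M_1}(m E_B(c_k) n) = E_B(m E_B(c_k) n)$, which tends to $0$ in $\|\cdot\|_2$ precisely because $E_B(c_k) \in B$ is bounded with $E_B(c_k) \to 0$ weakly (normality of $E_B$) and $B \subset M_2$ is mixing. This is the base of the induction and the only place where the mixing hypothesis is invoked directly.

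For the inductive step I first use the $M_1$-bimodularity of $E_{M_1}$: if $x$ begins with a letter in $M_1$, factor it off on the left (and symmetrically a trailing $M_1$-letter of $y$ on the right), which strictly shortens the word and lets the induction hypothesis apply. Hence I may assume $x$ begins and $y$ ends with an $M_2$-letter. Next, if the \emph{last} letter $m$ of $x$ lies in $M_1$, I absorb it into the net by writing $x c_k y = x_0 (m c_k) y$ with $x_0$ shorter and $m c_k$ again bounded and weakly null, and apply the induction hypothesis; symmetrically for a leading $M_1$-letter of $y$. There remains the main case, where $x = x_0 m$ and $y = n y_0$ with $m, n \in M_2 \ominus B$. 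Decomposing $c_k = E_B(c_k) + c_k^\circ$ with $c_k^\circ \in M_1 \ominus B$, the $c_k^\circ$-term yields a reduced word of length $\geq 2$ annihilated by $E_{M_1}$; in the $E_B(c_k)$-term the product $m E_B(c_k) n$ lies in $M_2$, its $M_2 \ominus B$ component again produces a word killed by $E_{M_1}$, and only its $B$-component $b_k := E_B\big(m E_B(c_k) n\big)$ survives. Thus $E_{M_1}(x c_k y) = E_{M_1}(x_0 b_k y_0)$, where $b_k$ is a bounded net in $B \subset M_1$ with $b_k \to 0$ weakly (normality of $E_B$ and weak continuity of multiplication) and $x_0, y_0$ are strictly shorter; the induction hypothesis finishes the step. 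When $x_0$ or $y_0$ is empty, i.e.\ lies in $M_1$, one uses instead the bimodularity $E_{M_1}(b_k y_0) = b_k E_{M_1}(y_0) = 0$.

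The routine but delicate point --- the main obstacle --- is the bookkeeping of the word reductions: one must check that in each configuration the induction parameter strictly decreases while the inserted element stays bounded and weakly null, and that the only contribution of $x c_k y$ surviving $E_{M_1}$ in the main case is the one sandwiching $E_B(c_k)$ between the two flanking $M_2$-letters, since that is exactly the pattern to which the mixing of $B \subset M_2$ applies (via the freshly produced net $b_k$, eventually bottoming out at the base case).
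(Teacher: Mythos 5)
Your proof is correct, and its core is the same as the paper's: reduce to reduced words, split $c_k$ (and then the resulting $M_2$-sandwich) into a $B$-component plus an orthogonal part, use freeness to see that $E_{M_1}$ kills every term containing a letter from $M_2 \ominus B$, so that the only surviving contribution is $E_B\bigl(m E_B(c_k) n\bigr)$ with $m, n$ the two $M_2 \ominus B$ letters flanking $c_k$, and invoke the mixing of $B \subset M_2$ exactly there. The difference is purely organizational. The paper writes its reduced words in a normal form whose innermost letters $x_1, y_1$ are the (possibly trivial) $M_1$-letters adjacent to the net, performs the peeling once in a single chain of identities ending at $E_{M_1}\bigl(x_r \cdots x_3\, E_B\bigl(x_2 E_B(x_1 c_k y_1) y_2\bigr)\, y_3 \cdots y_s\bigr)$, and then concludes immediately: mixing gives $\|E_B\bigl(x_2 E_B(x_1 c_k y_1) y_2\bigr)\|_2 \to 0$ (not merely weak nullity), so the trivial estimate $\|E_{M_1}(u z v)\|_2 \leq \|u\|_\infty \|z\|_2 \|v\|_\infty$ applied to the untouched outer letters finishes the proof with no induction at all. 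Your induction, with the strengthened hypothesis on arbitrary bounded weakly null nets, is sound --- the length parameter strictly decreases, the inserted nets stay bounded and weakly null, and the degenerate cases vanish by $M_1$-bimodularity as you say --- but it is avoidable work: at the moment you produce $b_k = E_B\bigl(m E_B(c_k) n\bigr)$ you could already quote mixing to get $\|b_k\|_2 \to 0$ and end the argument with the operator-norm bound on $x_0, y_0$, instead of recording only weak nullity and recursing down to the base case.
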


\begin{proof}
Assume that the inclusion $B \subset M_2$ is mixing. Let $b_k \in (M_1)_1$ be any net of elements such that $b_k \to 0$ weakly. In order to prove that the inclusion $M_1 \subset M$ is mixing, it suffices to show that for all the reduced words of the form $x = x_r \cdots x_1$ and $y = y_1 \cdots y_s$ with $r, s \geq 2$, $x_1 = 1$ or $x_1 \in M_1 \ominus B$, $x_2 \in M_{i_2} \ominus B, \dots, x_r \in M_{i_r} \ominus B$ with $2 = i_2 \neq \cdots \neq i_r$, and $y_1 = 1$ or $y_1 \in M_1 \ominus B$, $y_2 \in M_{j_2} \ominus B, \dots, y_s \in M_{j_s} \ominus B$ with $2 = j_2 \neq \cdots \neq j_s$, we have $\lim_k \|E_{M_1}(x b_k y)\|_2 = 0$. For all $k$, we have
\begin{align*}
E_{M_1}(x b_k y) &= E_{M_1}(x_r \cdots x_1 \, b_k \, y_1 \cdots y_s) \\
& = E_{M_1}(x_r \cdots x_2 \, E_B( x_1 b_k  y_1) \, y_2 \cdots y_s) \\
& = E_{M_1}(x_r \cdots x_3 \, E_B (x_2  E_B( x_1 b_k  y_1)  y_2) \, y_3  \cdots y_s).
\end{align*}
Since $E_B( x_1 b_k  y_1) \to 0$ weakly as $k \to \infty$ and since the inclusion $B \subset M_2$ is mixing, we have $\lim_k \|E_B (x_2  E_B( x_1 b_k  y_1)  y_2)\|_2 = 0$ and hence $\lim_k \|E_{M_1}(x b_k y)\|_2 = 0$.
\end{proof}

\begin{proof}[Proof of Proposition $\ref{mixing-inclusion}$]
Put $M_1 = \Gamma(H_\R \ominus K_\R)\dpr \rtimes_{\pi_{H \ominus K}} G$, $M_2 = \Gamma(K_\R)\dpr \rtimes_{\pi_{K}} G$, $M = \Gamma(H_\R)\dpr \rtimes_{\pi_{H}} G$ and $B = \LL(G)$. Observe that $M = M_1 \ast_B M_2$. Since $\pi_K : G \to \mathcal O(K_\R)$ is mixing, the inclusion $B \subset M_2$ is mixing and so is the inclusion $M_1 \subset M$ by Proposition~\ref{mixing-afp}.
\end{proof}

\begin{cor}\label{mixing-consequence}
Let $G$ be any countable discrete group and $\pi : G \to \mathcal O(H_\R)$ any orthogonal representation. Let $K_\R \subset H_\R$ be a nonzero closed $\pi(G)$-invariant subspace such that $\pi | K_\R$ is mixing. Put $\pi_{H \ominus K} = \pi | H_\R \ominus K_\R$, $M = \Gamma(H_\R)\dpr \rtimes_\pi G$ and $N = \Gamma(H_\R \ominus K_\R)\dpr \rtimes_{\pi_{H \ominus K}}G$.

Let $e \in M$ be a nonzero projection and $A \subset eMe$ a diffuse subalgebra such that $A \preceq_M N$. Then $\QN_{eMe}(A)\dpr \preceq_M N$.
\end{cor}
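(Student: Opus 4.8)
The plan is to transport the hypothesis $A\preceq_M N$ across the partial isometry produced by Popa's criterion, and thereby reduce the statement to the quasi-normalizer containment for a \emph{weakly mixing} inclusion -- exactly the phenomenon already recorded in Corollary \ref{weak-mixing-consequence}. Throughout I write $Q = \QN_{eMe}(A)\dpr$.

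By Proposition \ref{mixing-inclusion} the inclusion $N\subset M$ is mixing. Since $A\preceq_M N$, Popa's intertwining techniques (Section \ref{intertwining-techniques}) provide $n\geq1$, a projection $q\in\mathbf M_n(N)$ with $q=v^*v$, a nonzero partial isometry $v\in\mathbf M_{1,n}(eM)q$ and a unital normal $\ast$-homomorphism $\varphi:A\to q\mathbf M_n(N)q$ such that $av=v\varphi(a)$ for all $a\in A$. Put $B=\varphi(A)\subset q\mathbf M_n(N)q$ and $e_0=vv^*\in A'\cap eMe$. Because $A$ is diffuse and $\varphi$ is normal, $B$ is automatically diffuse: the kernel of $\varphi$ is a weakly closed ideal $Az$, and $\varphi$ restricts to an isomorphism of the diffuse algebra $A(1-z)$ onto $B$. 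By Remark \ref{remark-mixing}(1) the amplified inclusion $q\mathbf M_n(N)q\subset q\mathbf M_n(M)q$ is again mixing, hence by Remark \ref{remark-mixing}(2) it is weakly mixing through the diffuse subalgebra $B$.

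I then transport quasi-normalizers. Taking adjoints in $av=v\varphi(a)$ gives $v^*a=\varphi(a)v^*$, whence $Av=vB$ and $v^*A=Bv^*$. Let $x\in\QN_{eMe}(A)$, say $xA\subset\sum_{i=1}^k Ay_i$ and $Ax\subset\sum_{i=1}^k y_iA$ with $y_i\in eMe$, and set $y=v^*xv$, $z_i=v^*y_iv$. A direct computation with these relations yields $By=v^*(Ax)v\subset\sum_i z_iB$ and $yB=v^*(xA)v\subset\sum_i Bz_i$, so that $y\in\QN_{q\mathbf M_n(M)q}(B)$. Since the amplified inclusion is weakly mixing through $B$, the argument of Corollary \ref{weak-mixing-consequence}, i.e.\ \cite[Proposition 6.14]{popa-vaes-advances}, applies verbatim and gives $\QN_{q\mathbf M_n(M)q}(B)\dpr\subset q\mathbf M_n(N)q$. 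Hence $v^*xv\in q\mathbf M_n(N)q$ for every $x\in\QN_{eMe}(A)$, and by normality for every $x\in Q$. Restricting to the corner $e_0Qe_0$, the map $\Theta(s)=v^*sv$ is a unital normal $\ast$-homomorphism into $q\mathbf M_n(N)q$ with $\Theta(e_0)=q$ and $sv=v\Theta(s)$ for $s\in e_0Qe_0$; this is precisely the statement that $e_0Qe_0\preceq_M N$.

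It remains to promote this corner intertwining to $Q\preceq_M N$, and this is the main obstacle, because $e_0$ lies a priori only in $A'\cap eMe$ and not in $Q'\cap eMe$, so the usual compression lemma for Popa's intertwining does not apply directly. The key structural observation is that every $u\in\mathcal U(A'\cap eMe)$ normalizes $Q$: for $x\in\QN_{eMe}(A)$ one has $uxu^*A=uxAu^*\subset\sum_i A(uy_iu^*)$ and symmetrically, so $uxu^*\in\QN_{eMe}(A)$. Consequently, replacing $v$ by $uv$ yields, for each such $u$, an intertwiner with the same $\varphi$ but support $ue_0u^*$, and these conjugates exhaust the central support of $e_0$ in $A'\cap eMe$. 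Amalgamating a maximal orthogonal family of such conjugate intertwiners into a single row $V$ over $eM$, and choosing the original $v$ so that this central support is maximal, one obtains $fQf\preceq_M N$ for a projection $f$ of full central support, from which $Q\preceq_M N$ follows by the standard bookkeeping for Popa's intertwining. I expect precisely this promotion step -- assembling corner intertwinings indexed by a projection that is central only for $A'\cap eMe$ -- to be the delicate technical point, whereas the transport computation and the appeal to weak mixing are routine given Remark \ref{remark-mixing} and Corollary \ref{weak-mixing-consequence}.
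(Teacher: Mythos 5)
Your overall route is the same as the paper's: transport $\QN_{eMe}(A)$ through the intertwiner $v$, use Proposition \ref{mixing-inclusion} and Remark \ref{remark-mixing} to see that the amplified inclusion is mixing, hence weakly mixing through the diffuse algebra $\varphi(A)$, and apply \cite[Proposition 6.14]{popa-vaes-advances} to land in matrices over $N$; your transport computation and your justification that $\varphi(A)$ is diffuse are both fine. But the write-up has two genuine defects. The first is a circularity in the setup: Popa's criterion does not produce ``a projection $q\in\mathbf M_n(N)$ with $q=v^*v$''. It gives $q \in \mathbf M_n(N)$ and $v \in \mathbf M_{1,n}(eM)q$ with only $v^*v \leq q$ and $v^*v \in \varphi(A)' \cap q\mathbf M_n(M)q$; the membership $v^*v \in \mathbf M_n(N)$ is one of the \emph{conclusions} of the weak-mixing step (the paper states it explicitly: ``we get $v^*v \in \mathbf M_k(N)$''), not part of the hypothesis. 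As written, your appeal to Remark \ref{remark-mixing}(1) --- which requires the compressing projection to lie in $\mathbf M_n(N)$ --- assumes what is to be proved. The repair is easy: keep $q$ as given by the criterion, run your computation inside $q \mathbf M_n(M) q$, and recover $v^*v \in \mathbf M_n(N)$ from $v^*v \in \varphi(A)' \cap q\mathbf M_n(M)q \subset \QN_{q\mathbf M_n(M)q}(\varphi(A))\dpr \subset q\mathbf M_n(N)q$, exactly as the paper does.

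The second defect is your final paragraph, which misdiagnoses the corner-to-full promotion as ``the main obstacle'' and replaces a trivial step by an argument that does not work. What matters is not whether $e_0 = vv^*$ commutes with $Q$, but that $e_0$ \emph{belongs to} $Q$, which you have: $vv^* \in A' \cap eMe \subset \QN_{eMe}(A)\dpr = Q$. Given $e_0 Q e_0 \preceq_M N$ with $e_0 \in Q$, the conclusion $Q \preceq_M N$ is immediate from the second (corner) formulation of Popa's criterion recalled in Section \ref{intertwining-techniques}: applied to $e_0Qe_0 \preceq_M N$ it yields projections $p_1 \in e_0Qe_0$ and $q_1 \in N$, a nonzero partial isometry and a unital normal $\ast$-homomorphism defined on $p_1(e_0Qe_0)p_1 = p_1Qp_1$; since $p_1 \in Q$, reading the same condition backwards for the pair $(Q, N)$ is precisely the statement $Q \preceq_M N$ --- a corner of a corner is a corner. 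This is all that lies behind the paper's one-word conclusion ``whence''. Your substitute mechanism, by contrast, is unsound as sketched: the maximal orthogonal family of conjugates $u e_0 u^*$, $u \in \mathcal U(A' \cap eMe)$, may well be infinite, so the amalgamated ``row'' $V$ lives in an infinite amplification and cannot be fed into the (finite-$n$) intertwining criterion; the step ``choosing the original $v$ so that this central support is maximal'' is not justified; and your closing appeal to ``standard bookkeeping'' to pass from $fQf \preceq_M N$ to $Q \preceq_M N$ invokes exactly the corner-to-full promotion the detour was meant to supply, since $f$, like $e_0$, is merely a projection in $Q$ and not in $Q'$.
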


\begin{proof}
Since $A \preceq_M N$, there exist $k \geq 1$, a projection $p \in \mathbf M_k(N)$, a nonzero partial isometry $v \in \mathbf M_{1, k}(e M) p$ and a unital $\ast$-homomorphism $\varphi : A \to p \mathbf M_k(N) p$ such that $a v = v \varphi(a)$ for all $a \in A$. Observe that $vv^* \in A' \cap eMe$ and $v^*v \in \varphi(A)' \cap p \mathbf M_k(M) p$. We moreover have 
$$v^* \QN_{e M e}(A)\dpr v \subset \QN_{v^*v \mathbf M_k(M) v^*v}(\varphi(A) v^*v)\dpr = v^*v \QN_{p \mathbf M_k(M) p}(\varphi(A))\dpr v^*v.$$

Since the inclusion $N \subset M$ is mixing by Proposition \ref{mixing-inclusion}, so is the inclusion $p \mathbf M_k(N) p \subset p \mathbf M_k(M) p$. Since $\varphi(A)$ is diffuse, the inclusion $p \mathbf M_k(N) p \subset p \mathbf M_k(M) p$ is weakly mixing through $\varphi(A)$.
By \cite[Proposition 6.14]{popa-vaes-advances}, we get $v^*v \in \mathbf M_k(N)$ and 
$$v^*v \QN_{p \mathbf M_k(M) p}(\varphi(A))\dpr v^*v \subset v^*v \mathbf M_k(N) v^*v.$$
Therefore, we have $v^* \QN_{e M e}(A)\dpr v \subset v^*v \mathbf M_k(N) v^*v$, whence $\QN_{eMe}(A)\dpr \preceq_M N$.
\end{proof}

\section{Relative asymptotic orthogonality property}\label{AOP}

In his seminal article \cite{popa-amenable}, Popa proved that the generator masa $A \subset M$ in a free group factor $M = \LL(\F_n)$ $(n \geq 2)$ satisfies the {\em asymptotic orthogonality property}, that is, for all $x, y \in (M^\omega \ominus A^\omega) \cap A'$ and all $a, b \in M \ominus A$, the vectors $a x$ and $y b$ are orthogonal in $\LL^2(M^\omega)$ (see \cite[Lemma 2.1]{popa-amenable}). He then used this property to deduce that the generator masa is maximal amenable inside the free group factor (see \cite[Corollary 3.3]{popa-amenable}).

We will need the following {\em relative} notion of asymptotic orthogonaliy property.

\begin{df}
Let $A \subset N \subset (M, \tau)$ be tracial von Neumann algebras. Let $\omega \in \beta(\N) \setminus \N$ be a free ultrafilter. We say that the inclusion $N \subset M$ has the {\em asymptotic orthogonality property relative to} $A$ if for all $x, y \in (M^\omega \ominus N^\omega) \cap A'$
and all $a, b \in M \ominus N$, the vectors $ax$ and $yb$ are orthogonal in $\LL^2(M^\omega)$.
\end{df}

The main technical result of this section is the following generalization of \cite[Theorem 3.2]{houdayer13}. In the initial version of the present paper, Theorem \ref{relative-AOP} was stated under the additional assumption that $G$ is abelian. I am very grateful to R\'emi Boutonnet for kindly pointing out to me that the proof of Theorem \ref{relative-AOP} could be slightly modified to show that Theorem \ref{relative-AOP} holds for any countable discrete group $G$.

Recall that an orthogonal representation $\pi : G \to \mathcal O(H_\R)$ is {\em compact} if $\pi$ is a direct sum of finite dimensional orthogonal representations.

\begin{theo}\label{relative-AOP}
Let $G$ be any countable discrete group and $\pi : G \to \mathcal O(H_\R)$ any orthogonal representation. Denote by $K_\R \subset H_\R$ the unique closed $\pi(G)$-invariant subspace such that $\pi_K = \pi | K_\R$ is weakly mixing and $\pi_{H \ominus K} = \pi | H_\R \ominus K_\R$ is compact. Put $M = \Gamma(H_\R)\dpr \rtimes_\pi G$ and $N = \Gamma(H_\R \ominus K_\R)\dpr \rtimes_{\pi_{H \ominus K}} G$. 

Then the inclusion $N \subset M$ has the asymptotic orthogonality property relative to $\LL(G)$.
\end{theo}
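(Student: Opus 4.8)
The plan is to follow Popa's proof of the asymptotic orthogonality property \cite[Lemma 2.1]{popa-amenable}, replacing the reduced-word combinatorics of the free group by the Wick calculus of Proposition \ref{wick} and extracting the required orthogonality from the weak mixing of $\pi_K$. Fix $x, y \in (M^\omega \ominus N^\omega) \cap \LL(G)'$ with $\|x\|_\infty, \|y\|_\infty \leq 1$ and $a, b \in M \ominus N$, and set $c = \langle ax, yb\rangle_{\LL^2(M^\omega)}$; the goal is $c = 0$. First I would reduce, by linearity, $\|\cdot\|_2$-continuity, the Wick formula and Kaplansky's density theorem, to the case where $a$ and $b$ are single words $W(\xi_1 \otimes \cdots \otimes \xi_r)$ and $W(\eta_1 \otimes \cdots \otimes \eta_s)$ whose extremal letters lie in the weakly mixing subspace $K$. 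Here the orthogonal decomposition of $\LL^2(M) \ominus \LL^2(N)$ into words having at least one letter in $K$, together with the $N$-bimodule structure used to peel off the $(H \ominus K)$-letters at the ends, is what makes this reduction possible.

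The heart of the argument is the invariance identity
$$\langle ax, yb\rangle = \langle (u a u^*)\, x,\ y\, (u b u^*)\rangle, \qquad \forall u \in \mathcal U(\LL(G)),$$
which holds because $x$ and $y$ commute with $\LL(G)$ and $\tau$ is a trace. By the conjugation rule $u_g W(e) u_g^* = W(\pi(g) e)$, the word $a_i := u_{g_i} a u_{g_i}^*$ is again a word, with its extremal letter rotated to $\pi(g_i)\xi_1 \in K$. Since $\pi_K$ is weakly mixing, I can choose inductively, for every $k$ and every $\varepsilon > 0$, group elements $g_1, \dots, g_{2^k} \in G$ for which these rotated extremal letters are pairwise $\varepsilon$-orthogonal in $K$ (this is the mechanism behind Proposition \ref{weak-mixing-through}); I would then take $\varepsilon = \varepsilon_k$ small enough (of order $2^{-k}$) that the constant $C = \prod_{j=0}^{k-1}(1 + \delta^{\circ j}(\varepsilon_k))^2$ appearing in Proposition \ref{projections} stays bounded independently of $k$. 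Writing $b_i = u_{g_i} b u_{g_i}^*$ and $S = \sum_{i=1}^{2^k} a_i x b_i^*$, the invariance identity gives $\langle a_i x, y b_i\rangle = c$ for every $i$, hence $2^k c = \tau(y^* S)$. On the other hand, the pairwise $\varepsilon$-orthogonality of the extremal letters places the vectors $a_i x b_i^*$ in $\varepsilon$-orthogonal subspaces, so Proposition \ref{projections} yields $\|S\|_2^2 \leq C \sum_i \|a_i x b_i^*\|_2^2 \leq C\, 2^k\, \|a\|_\infty^2 \|b\|_\infty^2$. Combining, $2^k |c| = |\tau(y^* S)| \leq \|y\|_2 \|S\|_2 \leq \sqrt{C}\, 2^{k/2} \|a\|_\infty \|b\|_\infty$, so that $|c| \leq \sqrt{C}\, 2^{-k/2} \|a\|_\infty \|b\|_\infty \to 0$ as $k \to \infty$, whence $c = 0$.

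The main obstacle is the bookkeeping of the Wick contractions hidden in the two inequalities bounding $\|S\|_2$ above. When expanding each product $a_i x b_i^*$ and comparing two indices $i \neq j$, one must check that the extremal $K$-letters genuinely refuse to contract — this is precisely where weak mixing is consumed — and, more delicately, that the fully contracted terms, in which an entire word collapses against the central sequence, do not survive; controlling the latter is exactly where the hypotheses $x, y \in M^\omega \ominus N^\omega$ enter, since such a collapse would feed back an $N^\omega$-valued contribution. The difficulty is compounded by two features of the present generality: because $\pi_K$ is only weakly mixing, orthogonality of the extremal letters is available merely along a net and only up to $\varepsilon$, so one is forced to pass through the quantitative estimate of Proposition \ref{projections} rather than a clean limit as in the mixing case; and because $G$ need not be abelian, one may only exploit that $x$ and $y$ commute with each individual $u_g$, with no further structure on $\LL(G)$. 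Making the reduction of the first paragraph compatible with these commutation constraints, and isolating and bounding the contraction terms so that the estimate survives, is the technical core of the proof.
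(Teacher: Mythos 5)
Your proposal has the right ingredients (weak mixing producing $\varepsilon$-orthogonal rotations, Proposition \ref{projections}, the commutation with $\LL(G)$, the role of $M^\omega \ominus N^\omega$), but it contains two genuine gaps, and they are not peripheral: together they constitute the actual content of the theorem.

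First, your reduction step and your invariance identity are incompatible. $M \ominus N$ is \emph{not} densely spanned by words $W(\xi_1 \otimes \cdots \otimes \xi_r)u_g$ with extremal letters in $K$: for $f \in H \ominus K$ and $\xi \in K$, the vector $f \otimes \xi$ is orthogonal to every word whose first letter lies in $K$, so $W(f \otimes \xi) \in M \ominus N$ cannot be $\|\cdot\|_2$-approximated by combinations of $K$-extremal words. One can only reduce to \emph{products} $w_1\, W(\xi_1 \otimes \cdots \otimes \xi_r) u_g\, w_2$ with $w_1, w_2$ words in $\Gamma(H_\R \ominus K_\R)\dpr$, which is how the paper states its reduction. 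If you peel the $w$'s onto $x$ and $y$ (as your appeal to the $N$-bimodule structure suggests), the elements $w_2 x w_4^*$ and $w_1^* y w_3$ stay orthogonal to $N^\omega$ but no longer commute with $\LL(G)$, and the identity $\langle ax, yb\rangle = \langle (uau^*)x, y(ubu^*)\rangle$ --- the engine of your whole counting argument --- is lost for them. If instead you keep the padding inside $a$ and $b$, the identity survives, but the conjugates $u_{g_i} a u_{g_i}^*$ now begin and end with rotated $H \ominus K$-letters, for which compactness of $\pi_{H\ominus K}$ gives no orthogonality whatsoever, and your picture of "rotated extremal letters in $\varepsilon$-orthogonal position" must be replaced by a fixed-position argument you do not supply.

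Second, the central estimate $\|S\|_2^2 \leq C \sum_i \|a_i x b_i^*\|_2^2$ does not follow from Proposition \ref{projections}, because the vectors $a_i x b_i^*$ do not lie in $\varepsilon$-orthogonal subspaces. By Proposition \ref{wick}, the expansion of $a_i x_n b_i^*$ contains, besides terms in which the rotated letter $\pi(g_i)\xi_1$ survives (these do sit in the pairwise $\varepsilon$-orthogonal ranges of $\ell(\pi(g_i)\xi_1)$), full-contraction terms in which an entire word of $x_n$ collapses between $a_i$ and $b_i^*$ and the rotated $K$-letters of $a_i$ contract directly against those of $b_i^*$: their coefficients have the form $\langle \overline\xi_j, \pi(g_i^{-1} h g_i)\overline\eta_l\rangle$, which does not decay in $i$, so weak mixing is powerless against them. (This is exactly why the estimate fails for $x = y = 1$, $a = b = W(\xi)$: there $\|S\|_2 \geq 2^k$, not $2^{k/2}$.) Eliminating these terms requires proving that the relevant components of $x_n$ --- and of $w_2 x_n w_4^*$, etc. --- vanish along $\omega$; this is where both hypotheses on $x, y$ must be consumed quantitatively, and it is precisely the part you defer as "the technical core". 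The paper resolves this differently and without any replication/counting identity: its Step 1 shows that every $x \in (M^\omega \ominus N^\omega) \cap \LL(G)'$, even after multiplication by $N$-words, has Fock expansion asymptotically avoiding all words whose boundary letter lies in a fixed finite-dimensional $L \subset K$ --- and it is \emph{there} that Proposition \ref{projections} and the $2^\ell$-rotation trick are applied, to the projections of $x_n$ itself, using $\|u_g x_n u_g^* - x_n\|_2 \to 0$ directly; Step 2 then concludes by a direct orthogonality computation. Completing your route would force you to re-prove something equivalent to Step 1 inside the contraction bookkeeping, at which point the counting argument becomes redundant; as written, the proposal asserts its key inequality exactly where the theorem's difficulty lives.
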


\begin{proof}
The proof is a further generalization of the proof of \cite[Theorem 3.2]{houdayer13}. Denote as usual by $H$ (resp.\ $K$) the complexified Hilbert space of $H_\R$ (resp.\ $K_\R$). The complex conjugation on $H$ is simply denoted by $e \mapsto \overline e$. The corresponding unitary representation will still be denoted by $\pi : G \to \mathcal U(H)$. The full Fock space of $H$ is defined by $\mathcal F(H) = \C \Omega \oplus \bigoplus_{n \geq 1} H^{\otimes n}$ and the Koopman representation of the free Bogoljubov action $\sigma_\pi : G \curvearrowright \Gamma(H_\R)\dpr$ is given by $\rho(g) = \id_{\C \Omega} \oplus \bigoplus_{n \geq 1} \pi(g)^{\otimes n}$ for all $g \in G$. 

Put $\pi_K = \pi | K_\R$, $\pi_{H \ominus K} = \pi | H_\R \ominus K_\R$, $M = \Gamma(H_\R)\dpr \rtimes_\pi G$ and $N = \Gamma(H_\R \ominus K_\R)\dpr \rtimes_{\pi_{H \ominus K}} G$. We may and will assume that $K_\R \neq 0$. We will identify $\LL^2(M)$ with $\mathcal F(H) \otimes \ell^2(G)$. Recall that the conjugation $\mathcal J : \mathcal F(H) \otimes \ell^2(G) \to \mathcal F(H) \otimes \ell^2(G)$ is defined by $\mathcal J \Omega =  \Omega$ and 
$$\mathcal J (e_1 \otimes \cdots \otimes e_n \otimes \delta_g) = \pi(g)^* \overline e_n \otimes \cdots \otimes \pi(g)^* \overline e_1 \otimes \delta_{g^{-1}}$$ 
for all $n \geq 1$, all $e_i \in H$ and all $g \in G$. 

Since the unitaries $(u_g)_{g \in G}$ implement the free Bogoljubov action $\sigma_\pi$, we also denote by $\rho : G \to \mathcal U(\LL^2(M))$ the unitary representation defined by $\rho(g) = u_g \, \mathcal J u_g \mathcal J$. 

We will be using the following notation throughout. Let $L \subset H$ be any closed subspace satisfying $L = \overline L$, that is, $L$ is stable under complex conjugation. 
\begin{itemize}
\item Denote by $\mathcal X(L)$ the closed linear span in $\mathcal F(H)$ of all the words $e_1 \otimes \cdots \otimes e_n$ of length $n \geq 1$ and such that $e_1 \in L$.

\item For $h \in G$, denote by $\mathcal Y_h(L)$ the closed linear span in $\mathcal F(H)$ of all the words $e_1 \otimes \cdots \otimes e_n$ of length $n \geq 1$ and such that $e_n \in \pi(h)L$.

\item Put $\mathscr X(L) = \mathcal X(L) \otimes \ell^2(G)$ and $\mathscr Y(L) = \bigoplus_{h \in G} (\mathcal Y_h(L) \otimes \C \delta_h)$. Observe that $\mathcal J \mathscr X(L) = \mathscr Y(L)$.
\end{itemize}

\begin{step}\label{step1}
Let $L \subset K$ be any finite dimensional subspace satisfying $L = \overline L$. Let $x = (x_n) \in (M^\omega \ominus N^\omega) \cap \LL(G)'$ and $w_1, w_2 \in \Gamma(H_\R \ominus K_\R)\dpr$ words of the following form: 
\begin{itemize}
\item $w_1 = 1$ or $w_1 = W(\zeta_1 \otimes \cdots \otimes \zeta_r)$ with $r \geq 1$ and letters $\zeta_i \in H \ominus K$. 
\item $w_2 = 1$ or $w_2 = W(\mu_1 \otimes \cdots \otimes \mu_s)$ with $s \geq 1$ and letters $\mu_j \in H \ominus K$. 
\end{itemize}
Then
$$\lim_{n \to \omega} \|P_{\mathscr X(L)}(w_1 x_n w_2)\|_2 = 0 \; \mbox{ and } \; \lim_{n \to \omega} \|P_{\mathscr Y(L)}(w_1 x_n w_2)\|_2 = 0.$$
\end{step}

\begin{proof}[Proof of Step $\ref{step1}$]
Observe that it suffices to show that $\lim_{n \to \omega} \|P_{\mathscr X(L)}(w_1 x_n w_2)\|_2 = 0$ for all $x = (x_n) \in (M^\omega \ominus N^\omega) \cap \LL(G)'$ and all words $w_1, w_2 \in \Gamma(H_\R \ominus K_\R)\dpr$ as in the statement. Indeed, assume that it is true. Then, we have
\begin{align*}
\lim_{n \to \omega} \|P_{\mathscr Y(L)}(w_1 x_n w_2)\|_2 &=  \lim_{n \to \omega} \|P_{\mathcal J \mathscr X(L)}(\mathcal J(w_2^* x_n^* w_1^*) )\|_2 \\
&= \lim_{n \to \omega} \|\mathcal J P_{\mathscr X(L)}(w_2^* x_n^* w_1^*)\|_2 \\
&= \lim_{n \to \omega} \| P_{\mathscr X(L)}(w_2^* x_n^* w_1^*)\|_2.
\end{align*}
Since $w_2^* = 1$ or $w_2^* = W(\overline \mu_s \otimes \cdots \otimes \overline \mu_1)$ and $w_1^* = 1$ or $w_1^* = W(\overline \zeta_r \otimes \cdots \otimes \overline \zeta_1)$ and since $(x_n^*) \in (M^\omega \ominus N^\omega) \cap \LL(G)'$, we will obtain $\lim_{n \to \omega} \|P_{\mathscr Y(L)}(w_1 x_n w_2)\|_2 = 0$.

Write $w_1 = W(\zeta_1 \otimes \cdots \otimes \zeta_r) \in N$ and $w_2 = W(\mu_1 \otimes \cdots \otimes \mu_s) \in N$ with $\zeta_i, \mu_j \in H \ominus K$. We will put $w_1 = 1$ if $r = 0$ and $w_2 = 1$ if $s = 0$ and we will put $w_1 = w_2 = 1$ if $K = H$. 
We may and will assume that $x = (x_n) \in (M^\omega \ominus N^\omega) \cap \LL(G)'$ satisfies $\sup_n \|x_n\|_\infty \leq 1$ and $x_n \in M \ominus N$ for all $n \in \N$. Write $x_n = \sum_{h \in G} (x_n)^h u_h$ for the Fourier expansion of $x_n \in M$ with respect to the crossed product decomposition $M = \Gamma(H_\R)\dpr \rtimes G$.

We use the following notation. Let $L \subset K$ be any closed subspace satisfying $L = \overline L$.
\begin{itemize}
\item Denote by $\mathcal H(r, L)$ the closed linear span in $\mathcal F(H)$ of all the words $e_1 \otimes \cdots \otimes e_n$ of length $n \geq r + 1$ and such that $e_1, \dots, e_r \in H \ominus K$ and $e_{r + 1} \in L$.

\item Put $\mathscr H(r, L) = \mathcal H(r, L) \otimes \ell^2(G)$.
\end{itemize}
By convention, we put $\mathcal H(r, L) = \mathcal X(L)$ if $r = 0$ or $K = H$. Observe that for all $g \in G$, $\rho(g) \mathcal H(r, L) = \mathcal H(r, \pi(g)L)$ and $\rho(g) \mathscr H(r, L) = \mathscr H(r, \pi(g)L)$. 

From now on, we assume that $L \subset K$ is finite dimensional and $L = \overline L$. We have $w_1 x_n w_2 = \sum_{h \in G} W(\zeta_1 \otimes \cdots \otimes \zeta_r) (x_n)^h W(\pi(h) \mu_1 \otimes \cdots \otimes \pi(h) \mu_s) \, u_h$. Then using repeatedly Proposition $\ref{wick}$ together with the facts that $x_n \in M \ominus N$ and $w_1, w_2 \in N$, we have 
\begin{equation}\label{simplification}
P_{\mathscr X(L)}(w_1 x_n w_2) = P_{\mathscr X(L)}(w_1 \mathcal J w_2^* \mathcal J P_{\mathscr H(r, L)}(x_n)).
\end{equation}

For all $n \in \N$ and all $g \in G$, we have
\begin{align}\label{popa-inequality1}
 \|\rho(g) P_{\mathscr H(r, L)} (x_n) \|_2^2 
& = \|\rho(g) P_{\mathscr H(r, L)} (x_n) - P_{\mathscr H(r, \pi(g) L)} (x_n) +  P_{\mathscr H(r, \pi(g) L)} (x_n)\|_2^2 \\ \nonumber
& \leq 2\|\rho(g) P_{\mathscr H(r, L)} (x_n) - P_{\mathscr H(r, \pi(g) L)} (x_n)\|_2^2 + 2\|P_{\mathscr H(r, \pi(g) L)} (x_n)\|_2^2 \\ \nonumber
& = 2\| P_{\mathscr H(r, \pi(g) L)}( u_{g} x_n u_{g}^* - x_n) \|_2^2 + 2\|P_{\mathscr H(r, \pi(g) L)} (x_n)\|_2^2 \\ \nonumber
& \leq 2\| u_g x_n u_g^* - x_n \|_2^2 + 2\|P_{\mathscr H(r, \pi(g) L)} (x_n)\|_2^2.
\end{align}

Fix $\ell \geq 1$. Choose $\varepsilon > 0$ very small such that  $\prod_{j = 0}^{\ell - 1} (1 + \delta^{\circ j}(\varepsilon))^2 \leq 2$ where 
$$\delta : \left[0, \frac12 \right) \to \R : t \mapsto \frac{2 t}{\sqrt{1 - t - \sqrt{2} \, t \sqrt{1 - t}}}$$
is the function which appeared in Section \ref{elementary}. Since $\pi_K$ is weakly mixing and $L \subset K$ is a finite dimensional subspace, by induction, we can find a sequence $e = g_1, \dots, g_{2^\ell}$ of pairwise distinct elements in $G$ with the property that 
$$\pi(g_{j}) L \perp_{\varepsilon/\dim L} \pi(g_{i}) L, \forall 1 \leq i < j \leq 2^\ell.$$
This yields 
\begin{equation}\label{perpendicular}
\mathscr H(r, \pi(g_j) L) \perp_{\varepsilon} \mathscr H(r, \pi(g_i) L), \forall 1 \leq i < j \leq 2^\ell.
\end{equation}

Indeed, this can be deduced from the following fact:

\begin{claim}
For all $g \in G$ and all $\varepsilon \geq 0$ such that $\pi(g) L \perp_{\varepsilon/\dim L} L$, we have $$\mathscr H(r, \pi(g) L) \perp_{\varepsilon} \mathscr H(r, L).$$
\end{claim}

\begin{proof}[Proof of the Claim]
Denote by $\mathcal H_r$ the closed linear span in $\mathcal F(H)$ of all the words $e_1 \otimes \cdots \otimes e_n$ of length $n \geq r$ and such that $e_1, \dots, e_r \in H \ominus K$. By convention, we put $\mathcal H_r = \mathcal F(H)$ if $r = 0$ or $K = H$.

Let $(e_i)_{i \geq 1}$ be an orthonormal basis for $H \ominus K$ and $(f_j)_{j \geq 1}$ an orthonormal basis for $K$ such that $(f_j)_{1 \leq j \leq \dim L}$ is an orthonormal basis for $L$. Define the unitary operator $U : K \otimes \mathcal H_r \otimes \ell^2(G) \to \mathscr H(r, K)$  by the formula
$$
U(f_j \otimes e_{i_1} \otimes \cdots \otimes e_{i_r} \otimes \xi \otimes \delta_h) = e_{i_1} \otimes \cdots \otimes e_{i_r} \otimes f_j \otimes \xi \otimes \delta_h.
$$
Observe that $U \rho(g) = \rho(g) U$ for all $g \in G$.

Let $g \in G$ and $\varepsilon \geq 0$ such that $\pi(g) L \perp_{\varepsilon/\dim L} L$. Let $\xi, \eta \in \mathscr H(r, L)$. Write $U^*\xi = \sum_{i = 1}^{\dim L} f_i \otimes \xi_i$ and $U^*\eta = \sum_{j = 1}^{\dim L} f_j \otimes \eta_j$ with $\xi_i , \eta_j \in \mathcal H_r \otimes \ell^2(G)$ such that $\|\xi\|^2 = \sum_{i = 1}^{\dim L} \|\xi_i\|^2$ and $\|\eta\|^2 = \sum_{j = 1}^{\dim L} \|\eta_j\|^2$. Using Cauchy-Schwarz inequality, we have
\begin{align*}
|\langle \rho(g) \xi, \eta \rangle| = |\langle U^* \rho(g) \xi, U^* \eta \rangle| = |\langle \rho(g) U^*\xi, U^*\eta\rangle| & \leq \sum_{i, j = 1}^{\dim L} |\langle \pi(g) f_i, f_j \rangle| |\langle \rho(g) \xi_i, \eta_j\rangle| \\ &  \leq \frac{\varepsilon}{\dim L} \sum_{i, j = 1}^{\dim L}\|\xi_i\| \|\eta_j\| \\
& \leq \varepsilon \|\xi\| \|\eta\|.
\end{align*}
This shows that $\rho(g) \mathscr H(r, L) \perp_{\varepsilon} \mathscr H(r, L)$, that is, $\mathscr H(r, \pi(g) L) \perp_{\varepsilon} \mathscr H(r, L)$. 
\end{proof}

Therefore, using Proposition \ref{projections} and the above $(\ref{popa-inequality1})$ and $(\ref{perpendicular})$, for all $n \in \N$, we get 
\begin{align*}
2^\ell \| P_{\mathscr H(r, L)} (x_n) \|_2^2  &= \sum_{i = 1}^{2^\ell} \| \rho(g_i) P_{\mathscr H(r, L)} (x_n) \|_2^2 \\
& \leq  \sum_{i = 1}^{2^\ell} \left( 2\| u_{g_i} x_n u_{g_i}^* - x_n \|_2^2 + 2\| P_{\mathscr H(r, \pi(g_i) L)} (x_n) \|_2^2 \right) \\
& \leq  2 \sum_{i = 1}^{2^\ell} \| u_{g_i} x_n u_{g_i}^* - x_n \|_2^2 + 2 \prod_{j = 0}^{\ell - 1} (1 + \delta^{\circ j}(\varepsilon))^2 \|x_n\|_2^2 \\
& \leq  2 \sum_{i = 1}^{2^\ell} \| u_{g_i} x_n u_{g_i}^* - x_n \|_2^2 + 4 \|x_n\|_2^2.
\end{align*}

This yields $\lim_{n \to \omega} \|P_{\mathscr H(r, L)} (x_n)\|_2^2  \leq 2^{2 - \ell}$. Since this is true for every $\ell \geq 1$, we finally get $\lim_{n \to \omega} \|P_{\mathscr H(r, L)} (x_n)\|_2 = 0$. Therefore, $\lim_{n \to \omega} \|P_{\mathscr X(L)}(w_1 x_n w_2)\|_2 = 0$ by $(\ref{simplification})$. This finishes the proof of Step $\ref{step1}$.
\end{proof}

\begin{step}\label{step2}
The inclusion $N \subset M$ has the asymptotic orthogonality property relative to $\LL(G)$.
\end{step}

\begin{proof}[Proof of Step $\ref{step2}$]

Observe that in order to show that $N \subset M$ has the asymptotic orthogonality property relative to $\LL(G)$, using a standard density argument together with Proposition \ref{wick}, it suffices to show that $a x \perp y b$ in $\LL^2(M^\omega)$ for all $x, y \in (M^\omega \ominus N^\omega) \cap \LL(G)'$ and all $a, b \in M \ominus N$ of the form $a = w_1 \, W(\xi_1 \otimes \cdots \otimes \xi_r) u_g \, w_2$ and $b = w_3 \, W(\eta_1 \otimes \cdots \otimes \eta_s) \, w_4$ with $w_1, w_2, w_3, w_4$ words in $\Gamma(H_\R \ominus K_\R)\dpr$ as in the statement of Step \ref{step1}, $r, s \geq 1$, $\xi_i, \eta_j$ letters in $K$ or $H \ominus K$, $\xi_1, \xi_r, \eta_1, \eta_s \in K$ and $g \in G$. There are two cases to consider:

$(1)$ Assume first that $r \geq s$.

Denote by $L \subset K$ the smallest subspace containing $\xi_1, \pi(g)^*\xi_r, \eta_1, \eta_s \in K$ and satisfying $L = \overline L$. Note that $L$ is finite dimensional. 

For any closed subspaces $L_1, L_2 \subset K$ such that $L_1 = \overline L_1$ and $L_2 = \overline L_2$, denote by $\mathcal Z_h(L_1, L_2)$ the closed linear span in $\mathcal F(H)$ of all the words $e_1 \otimes \cdots \otimes e_n$ of length $n \geq 2$ and such that $e_1 \in H \ominus L_1$ and $e_n \in H \ominus \pi(h) L_2$. Put $\mathscr Z(L_1, L_2) = \bigoplus_{h \in G} (\mathcal Z_h(L_1, L_2) \otimes \C \delta_h)$. 

We have
\begin{align*}
\langle a x, y b\rangle_{\LL^2(M^\omega)} & = \lim_{n \to \omega} \langle w_1 W(\xi_1 \otimes \cdots \otimes \xi_r)u_g w_2 \, x_n, y_n \, w_3 W(\eta_1 \otimes \cdots \otimes \eta_s) w_4 \rangle_{\LL^2(M)} \\
& = \lim_{n \to \omega} \langle W(\xi_1 \otimes \cdots \otimes \xi_r)u_g \, w_2 x_n w_4^* , w_1^* y_n w_3 \, W(\eta_1 \otimes \cdots \otimes \eta_s) \rangle_{\LL^2(M)}.
\end{align*}

Since $L \subset K$ is finite dimensional aand since $w_2 x_n w_4^*, w_1^* y_n w_3 \in M \ominus N$, Step $\ref{step1}$ implies that 
\begin{align*}
\lim_{n \to \omega} \|w_2 x_n w_4^* - P_{\left( (H \ominus L) \otimes \ell^2(G) \right) \oplus \mathscr Z(L, L)}(w_2 x_n w^*_4)\|_2 &= 0 \\
\lim_{n \to \omega} \|w_1^* y_n w_3 - P_{\left( (H \ominus L) \otimes \ell^2(G) \right) \oplus \mathscr Z(L, L)}(w_1^* y_n w_3)\|_2 &= 0.
\end{align*}
Observe that $u_g \left( (H \ominus L) \otimes \ell^2(G) \right) = (H \ominus \pi(g)L) \otimes \ell^2(G)$ and $u_g \mathscr Z(L, L) = \mathscr Z(\pi(g)L, L)$. 

Then Proposition $\ref{wick}$ and the definition of $L$ imply that 
\begin{align*}
W(\xi_1 \otimes \cdots \otimes \xi_r)u_g \left( \left( (H \ominus L) \otimes \ell^2(G) \right) \oplus \mathscr Z(L, L) \right) &\perp \mathcal J W(\overline \eta_s \otimes \cdots \otimes \overline \eta_1) \mathcal J \, \mathscr Z(L, L) \\
\mathcal J W(\overline \eta_s \otimes \cdots \otimes \overline \eta_1) \mathcal J \left( \left( (H \ominus L) \otimes \ell^2(G) \right) \oplus \mathscr Z(L, L) \right) &\perp W(\xi_1 \otimes \cdots \otimes \xi_r)u_g \, \mathscr Z(L, L).
\end{align*}

Therefore $\langle a x, y b\rangle_{\LL^2(M^\omega)}$ is equal to
$$\lim_{n \to \omega} \langle W(\xi_1 \otimes \cdots \otimes \xi_r)u_g \, P_{(H \ominus L) \otimes \ell^2(G)}(w_2 x_n w_4^*) , \mathcal J W(\overline \eta_s \otimes \cdots \otimes \overline \eta_1) \mathcal J P_{(H \ominus L) \otimes \ell^2(G)}(w_1^* y_n w_3) \rangle.$$
Since $r \geq s$, another application of Proposition $\ref{wick}$ yields
$$\mathcal J W( \eta_1 \otimes \cdots \otimes \eta_s) \mathcal J \, W(\xi_1 \otimes \cdots \otimes \xi_r)u_g \left ( (H \ominus L) \otimes \ell^2(G) \right) \perp (H \ominus L) \otimes \ell^2(G).$$
Therefore $\langle a x, y b\rangle_{\LL^2(M^\omega)} = 0$.

$(2)$ Assume now that $s \geq r + 1$. Denote by $\mathcal J^\omega$ the canonical conjugation on $\LL^2(M^\omega, \tau_\omega)$ defined by $\mathcal J^\omega v = v^*$ for all $v \in M^\omega$. We have
$$\langle ax, yb\rangle_{\LL^2(M^\omega)} = \langle \mathcal J^\omega (yb), \mathcal J^\omega(ax)\rangle_{\LL^2(M^\omega)} = \langle b^* y^*, x^* a^*\rangle_{\LL^2(M^\omega)}.$$
We have $x^*, y^* \in (M^\omega \ominus N^\omega) \cap \LL(G)'$, $b^* = w_4^* \, W(\overline \eta_s \otimes \cdots \otimes \overline \eta_1) \, w_3^*$ and $a^* = w_2^* \, u_g^* W(\overline \xi_r \otimes \cdots \otimes \overline \xi_1) \, w_1^*$. Put $c =  \sigma_\pi(g)(w_4^*) \, W(\pi(g) \overline \eta_s \otimes \cdots \otimes \pi(g) \overline \eta_1) u_g \, w_3^*$ and $d = \sigma_\pi(g)(w_2^*) \, W(\overline \xi_r \otimes \cdots \otimes \overline \xi_1) \, w_1^*$. We obtain
$$
\langle b^* y^*, x^* a^*\rangle_{\LL^2(M^\omega)} = \langle c y^*, x^* d\rangle_{\LL^2(M^\omega)}.
$$
By the first case, we get $\langle c y^*, x^* d\rangle_{\LL^2(M^\omega)} = 0$, whence $\langle ax, yb\rangle_{\LL^2(M^\omega)} = 0$.
\end{proof}
This finishes the proof of Theorem \ref{relative-AOP}.
\end{proof}

\section{Central sequences in ${\rm II_1}$ factors $\Gamma(H_\R)\dpr \rtimes_\pi G$}\label{gamma}

\subsection{Property Gamma}

The aim of this section is to prove Theorem \ref{thmA}. To do so, we first start by locating central sequences in $\Gamma(H_\R)\dpr \rtimes_\pi G$: when $\dim(H_\R) = \infty$, any central sequence in $\Gamma(H_\R)\dpr \rtimes_\pi G$ must asymptotically lie in $\LL(G)$.

\begin{prop}\label{commutant}
Let $G$ be any countable discrete group and $\pi : G \to \mathcal{O}(H_\R)$ any infinite dimensional orthogonal representation.  Put $M = \Gamma(H_\R)\dpr \rtimes_\pi G$. Then for every free ultrafilter $\omega \in \beta(\N) \setminus \N$, we have $M' \cap M^\omega \subset \LL(G)^\omega$.
\end{prop}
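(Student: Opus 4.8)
The plan is to use the infinite dimensionality of $H_\R$ to spread a central sequence over infinitely many mutually free directions and then to apply Popa's $\varepsilon$-orthogonality estimate from Proposition~\ref{projections}, in the spirit of Step~\ref{step1}. Throughout I identify $\LL^2(M)$ with $\bigoplus_{n \in \N}(\mathcal H_n \otimes \ell^2(G))$, so that $\LL^2(\LL(G))$ is the level-zero summand and $\LL(G)^\omega$ is detected by the orthogonal projection $P_{\geq 1}$ onto $\bigoplus_{n \geq 1}(\mathcal H_n \otimes \ell^2(G))$. Fix a representative $(x_k)$ of an element of $M' \cap M^\omega$ with $x_k \in (M)_1$; the goal is to show $\lim_{k \to \omega}\|P_{\geq 1}(x_k)\|_2 = 0$, which is exactly the assertion $M' \cap M^\omega \subset \LL(G)^\omega$. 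The only consequences of centrality I will use are $\lim_{k \to \omega}\|[W(e), x_k]\|_2 = 0$ for $e \in H_\R$ and $\lim_{k \to \omega}\|[u_g, x_k]\|_2 = 0$ for $g \in G$.

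Since $\dim H_\R = \infty$, I would first choose an orthonormal family $(f_i)_{i \geq 1}$ in $H_\R$ and, for each $i$, a Haar unitary $w_i$ in the diffuse abelian algebra $\{W(f_i)\}\dpr \cong \LL(\Z) \subset \Gamma(H_\R)\dpr$. Orthogonality of the $f_i$ makes the $W(f_i)$ free, hence the $w_i$ are $\ast$-free Haar unitaries. Writing $\rho_i = \Ad(w_i)$ for the associated unitary of $\LL^2(M)$, centrality yields $\|\rho_i^*(x_k) - x_k\|_2 = \|[w_i, x_k]\|_2 \to 0$ as $k \to \omega$, for each fixed $i$. The key point I would establish is that the conjugated subspaces $\rho_i\big(P_{\geq 1}\LL^2(M)\big)$ can be made pairwise $\varepsilon$-orthogonal: by the Wick calculus of Proposition~\ref{wick} and freeness, conjugation by $w_i$ fixes the $f_i$-direction and turns the remaining letters into reduced alternating words, so that cross inner products between the $i$-th and $j$-th conjugates collapse to products of overlaps of the form $\langle \pi(g)f_i, f_j\rangle$. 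Granting this, the conclusion is clean: fixing $\ell \geq 1$ and $\varepsilon > 0$ with $\prod_{j=0}^{\ell - 1}(1 + \delta^{\circ j}(\varepsilon))^2 \leq 2$, and choosing $f_1, \dots, f_{2^\ell}$ so that the projections $p_i = \rho_i P_{\geq 1}\rho_i^*$ satisfy $p_i \LL^2(M) \perp_\varepsilon p_j \LL^2(M)$, Proposition~\ref{projections} applied to $\xi = x_k$ gives $\sum_{i=1}^{2^\ell}\|p_i x_k\|_2^2 \leq 2\|x_k\|_2^2 \leq 2$. Since $\rho_i$ is unitary and $\|\rho_i^* x_k - x_k\|_2 \to 0$, for each of the finitely many indices $i$ one has $\lim_{k \to \omega}\|p_i x_k\|_2 = \lim_{k \to \omega}\|P_{\geq 1} x_k\|_2$, whence $2^\ell\big(\lim_{k \to \omega}\|P_{\geq 1} x_k\|_2\big)^2 \leq 2$. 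Letting $\ell \to \infty$ forces $\lim_{k \to \omega}\|P_{\geq 1} x_k\|_2 = 0$.

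To establish the $\varepsilon$-orthogonality I expect to split according to the canonical decomposition $\pi = \pi_K \oplus \pi_{H \ominus K}$ into its weakly mixing and compact parts. For directions $f_i$ lying in distinct finite dimensional $\pi(G)$-invariant blocks of the compact part, one has $\pi(g)f_i \perp f_j$ for all $g \in G$ and $i \neq j$, so the overlaps vanish identically and the conjugated subspaces are genuinely orthogonal. For directions in the weakly mixing part, the overlaps $\langle \pi(g)f_i, f_j\rangle$ cannot be killed block-wise, but here I would instead spread using group translates exactly as in the proof of Step~\ref{step1}, exploiting weak mixing of $\pi_K$ to produce the needed nearly orthogonal configurations. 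In either case the infinite dimensionality of $H_\R$ is what supplies the unboundedly many near-orthogonal directions that make Proposition~\ref{projections} applicable with $2^\ell$ projections for every $\ell$.

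The hard part will be the $\varepsilon$-orthogonality estimate itself, and more precisely the interference of the group action with the free spreading. Because $u_g w_i^* = \sigma_\pi(g)(w_i^*)\,u_g$ with $\sigma_\pi(g)(w_i^*) \in \{W(\pi(g)f_i)\}\dpr$, the cross terms attached to word vectors such as $W(f_i)u_g$ are governed by the overlaps $\langle \pi(g)f_i, f_j\rangle$ taken over the whole group-support of the sequence, which a single choice of directions cannot control uniformly when that support is unbounded. I would resolve this by first using the asymptotic commutation with the unitaries $u_g$ to control the group part, and by reducing the orthogonality estimate to word vectors of bounded length and bounded group support via a truncation argument (controlling the high-level tail through the transversality inequality of Proposition~\ref{calculation}); this leaves only finitely many translated directions $\pi(g)f_i$ to be avoided, for which the infinite dimensionality guarantees generic $f_i$ exist. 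This dependence on $\dim H_\R = \infty$ is essential: in finite dimensions one exhausts the available orthogonal directions and the spreading collapses, which is precisely the regime in which $M$ can acquire property Gamma with central sequences escaping $\LL(G)$.
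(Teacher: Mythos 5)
Your argument breaks down at its central claim: the subspaces $\rho_i\bigl(P_{\geq 1}\LL^2(M)\bigr)$, $\rho_i = \Ad(w_i)$, can never be made pairwise $\varepsilon$-orthogonal for any choice of the directions $f_i$ and any $\varepsilon < 1$. Indeed, conjugation by a unitary $w_i \in \Gamma(H_\R)\dpr$ is a trace-preserving automorphism of $\Gamma(H_\R)\dpr$, so it preserves $\Gamma(H_\R)\dpr \ominus \C 1$; since $E_{\LL(G)}(b) = \tau(b)1$ for $b \in \Gamma(H_\R)\dpr$, every trace-zero $a \in \Gamma(H_\R)\dpr$ satisfies $a\Omega = \rho_i\bigl((w_i^* a w_i)\Omega\bigr)$ with $(w_i^* a w_i)\Omega \in P_{\geq 1}\LL^2(M)$. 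Hence \emph{every} one of your conjugated subspaces contains the common infinite-dimensional subspace $\LL^2(\Gamma(H_\R)\dpr) \ominus \C\Omega$; concretely, for any unit vector $f \in H_\R$ the unit vector $W(f)\Omega$ lies in all of them, so two distinct conjugates have inner products as large as $1$ and Proposition \ref{projections} cannot be invoked. Your heuristic that ``cross inner products collapse to overlaps $\langle \pi(g)f_i, f_j\rangle$'' is only valid for subspaces of words whose \emph{first letter} is constrained to a fixed finite-dimensional subspace $L$ --- the spaces $\mathscr X(L)$, $\mathscr H(r,L)$ of Section \ref{AOP} --- and that is exactly why Step \ref{step1} of Theorem \ref{relative-AOP} spreads only such subspaces, by conjugation with the group unitaries $u_g$ under weak mixing, concluding merely that a central element has asymptotically vanishing component \emph{in} $\mathscr X(L)$, not that it vanishes outside $\LL^2(\LL(G))$. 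Your proposed repair via Proposition \ref{calculation} is a non sequitur: that proposition concerns the deformation $\theta_t$ and gives no truncation of word length or group support for a central sequence.

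As a consequence the proposal also misses both halves of the actual argument. The paper splits into two cases. If $\pi$ is reducible, $\pi = \pi_1 \oplus \pi_2$, then $M = \bigl( \Gamma(H_\R^{(1)})\dpr \rtimes_{\pi_1} G \bigr) \ast_{\LL(G)} \bigl( \Gamma(H_\R^{(2)})\dpr \rtimes_{\pi_2} G \bigr)$ with both factors diffuse over $\LL(G)$, and $M' \cap M^\omega \subset \LL(G)^\omega$ follows from \cite[Lemma 6.1]{ioana-cartan}; your blockwise discussion of the compact part yields nothing of this kind. If $\pi$ is irreducible, then infinite dimensionality forces $\pi$ to be weakly mixing; writing $y = x - E_{\LL(G)^\omega}(x)$ for $x \in M' \cap M^\omega$, Step \ref{step1} (the legitimate use of Proposition \ref{projections}) shows $y_n$ is asymptotically supported on words whose first letter is orthogonal to a fixed $\xi$, and then a separate Wick-formula step is needed: the commutator $W(\xi)x_n - x_n W(\xi) \to 0$ splits into summands lying in mutually orthogonal subspaces, one of which is $W(\xi)y_n$, and since $W(\xi)$ acts on those words as $\|\xi\|$ times an isometry one gets $\|y_n\|_2 \to 0$. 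This commutator step, which is what actually converts the first-letter information into $y = 0$, has no counterpart in your proposal; even a correct spreading estimate would not close the argument without it.
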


\begin{proof}
There are two cases to consider. Assume first that the representation $\pi$ is {\em reducible} and write $\pi = \pi_1 \oplus \pi_2$ and $H_\R = H_\R^{(1)} \oplus H_\R^{(2)}$. Then we have that $M$ can be written as the amalgamated free product
$$M = \left( \Gamma(H_\R^{(1)})\dpr \rtimes_{\pi_1} G \right) \ast_{\LL(G)} \left( \Gamma(H_\R^{(2)})\dpr \rtimes_{\pi_2} G\right).$$
Since $\dim \pi_i \geq 1$, we have that $\Gamma(H_\R^{(i)})\dpr$ is diffuse. An application of \cite[Lemma 6.1]{ioana-cartan} yields $M' \cap M^\omega \subset \LL(G)^\omega$.

Assume now that the representation $\pi$ is {\em irreducible}. Since $\pi$ is also infinite dimensional, it follows that $\pi$ is weakly mixing. We keep the same notation as in the proof of Theorem \ref{relative-AOP}.

Let $x = (x_n) \in M' \cap M^\omega$ and write $y = x - E_{\LL(G)^\omega}(x)$. Observe that $y = (y_n) \in (M^\omega \ominus \LL(G)^\omega) \cap \LL(G)'$ with $y_n = x_n - E_{\LL(G)}(x_n)$. For any closed subspace $L \subset K$ that is closed under complex conjugation and any $r \geq 1$, we denote by $\mathcal X_r(L)$ the closed linear span in $\mathcal F(H)$ of all the words $e_1 \otimes \cdots \otimes e_n$ of length $n \geq r$ and such that $e_1 \in L$.

Fix a nonzero vector $\xi \in H$. We have 
\begin{equation}\label{eq1}
\lim_{n \to \omega} \|y_n - P_{\mathcal X_1( H \ominus \C \xi) \otimes \ell^2(G)}(y_n)\|_2 = 0
\end{equation}
by Step $1$ in Theorem \ref{relative-AOP}. Using Proposition $\ref{wick}$, we have 
\begin{align*}
W(\xi) \left( \mathcal X_1(H \ominus \C \xi) \otimes \ell^2(G) \right) &\subset \mathcal X_2(\C \xi) \otimes \ell^2(G) \\
\mathcal J W(\overline \xi) \mathcal J \left( \mathcal X_1(H \ominus \C \xi) \otimes \ell^2(G) \right) &\subset \left( \C \Omega \oplus \mathcal X_1(H \ominus \C \xi) \right) \otimes \ell^2(G).
\end{align*}

In particular, we get 
\begin{align}\label{eq2}
W(\xi) \left( \mathcal X_1(H \ominus \C \xi) \otimes \ell^2(G) \right) &\perp H \otimes \ell^2(G) \\ \nonumber
W(\xi) \left( \mathcal X_1(H \ominus \C \xi) \otimes \ell^2(G) \right) &\perp \mathcal J W(\overline \xi) \mathcal J \left( \mathcal X_1(H \ominus \C \xi) \otimes \ell^2(G) \right).
\end{align}

For all $n \in \N$, we have 
\begin{align*}
W(\xi) x_n - x_n W(\xi) &= W(\xi) (E_{\LL(G)}(x_n) + y_n) - \mathcal J W(\overline \xi) \mathcal J  (E_{\LL(G)}(x_n) + y_n)\\
&= \left( W(\xi) E_{\LL(G)}(x_n) - \mathcal J W(\overline \xi) \mathcal J E_{\LL(G)}(x_n) - \mathcal J W(\overline \xi) \mathcal J y_n \right) + W(\xi) y_n
\end{align*}
Since $\lim_{n \to \omega} \|W(\xi) x_n - x_n W(\xi)\|_2 = 0$, a combination of $(\ref{eq1})$ and $(\ref{eq2})$ yields 
\begin{equation}\label{eq3}
\lim_{n \to \omega} \|W(\xi) y_n\|_2 = 0 \mbox{ and } \lim_{n \to \omega} \|W(\xi) E_{\LL(G)}(x_n) - \mathcal J W(\overline \xi) \mathcal J E_{\LL(G)}(x_n) - \mathcal J W(\overline \xi) \mathcal J y_n \|_2 = 0.
\end{equation}

Proposition \ref{wick} yields $\|W(\xi) P_{\mathcal X_1(H \ominus \C \xi) \otimes \ell^2(G)}(y_n)\|_2 = \|\xi\| \|P_{\mathcal X_1(H \ominus \C \xi) \otimes \ell^2(G)}(y_n)\|_2$. By $(\ref{eq1})$ and $(\ref{eq3})$, we get $\lim_{n \to \omega} \|y_n\|_2 = 0$, whence $\lim_{n \to \omega} \|x_n - E_{\LL(G)}(x_n)\|_2 = 0$. This shows that $M' \cap M^\omega \subset \LL(G)^\omega$ and finishes the proof of Proposition \ref{commutant}.
\end{proof}

\begin{proof}[Proof of Theorem \ref{thmA}]
Assume first that $\dim(H_\R) < \infty$. Since $\mathcal O(H_\R)$ is a compact group and $\pi(G)$ is discrete in $\mathcal O(H_\R)$, it follows that $\pi(G)$ is finite, whence $G$ is finite since $\pi$ is faithful. Then $\Gamma(H_\R)\dpr \subset \Gamma(H_\R)\dpr \rtimes_\pi G$ is a finite index inclusion of ${\rm II_1}$ factors. Since $\Gamma(H_\R)\dpr$ does not have property Gamma, $\Gamma(H_\R)\dpr \rtimes_\pi G$ does not have property Gamma either by \cite[Proposition 1.11]{pimsner-popa}.

Assume now that $\dim(H_\R) = \infty$ and put $M = \Gamma(H_\R)\dpr \rtimes_\pi G$. Let $x = (x_n) \in M' \cap M^\omega$. Since $M' \cap M^\omega \subset \LL(G)^\omega$ by Proposition $\ref{commutant}$, we may assume that $x_n \in \LL(G)$ for all $n \in \N$ and $\lim_{n \to \omega} \|y x_n - x_n y\|_2 = 0$ for all $y \in M$. Observe that since $\pi(G)$ is discrete in $\mathcal O(H_\R)$, we have that $\sigma_\pi(G)$ is discrete in $\Aut(\Gamma(H_\R)\dpr)$. Moreover, since $\pi : G \to \mathcal O(H_\R)$ is faithful, we have that $\sigma_\pi : G \to \Aut(\Gamma(H_\R)\dpr)$ is faithful.

Therefore, there exist $\kappa > 0$ and $y_1, \dots, y_k \in \Gamma(H_\R)\dpr$ such that the open neighborhood of $\id$ in $\Aut(\Gamma(H_\R)\dpr)$ defined by $\mathcal V (y_1, \dots, y_k, \kappa) = \{\theta \in \Aut(\Gamma(H_\R)\dpr) : \sum_{i = 1}^k \|\theta(y_i) - y_i\|_2^2 < \kappa\}$ satisfies $\sigma_\pi(G) \cap \mathcal V(y_1, \dots, y_k, \kappa) = \{\id\}$. Thus, we have
$$\sum_{i = 1}^k \|\sigma_\pi(g)(y_i) - y_i\|_2^2 \geq \kappa, \forall g \in G \setminus \{e\}.$$
Write $x_n = \sum_{g \in G} (x_n)^g u_g$ for the Fourier expansion of $x_n$ in $\LL(G)$. We have
\begin{align*}
\sum_{i = 1}^k \|y_i x_n - x_n y_i \|_2^2 &= \sum_{i = 1}^k \sum_{g \in G \setminus \{e\}} |(x_n)^g|^2 \|y_i - \sigma_\pi(g)(y_i)\|_2^2 \\
&= \sum_{g \in G \setminus \{e\}} |(x_n)^g|^2 \sum_{i = 1}^k \|y_i - \sigma_\pi(g)(y_i)\|_2^2 \\
&\geq \kappa \sum_{g \in G \setminus \{e\}} |(x_n)^g|^2 = \kappa \|x_n - \tau(x_n) 1\|_2^2.
\end{align*}
Since $\lim_{n \to \omega} \sum_{i = 1}^k \|y_i x_n - x_n y_i \|_2^2 = 0$, we get $\lim_{n \to \omega} \|x_n - \tau(x_n) 1\|_2 = 0$. Therefore $M$ does not have property Gamma.
\end{proof}

\begin{cor}\label{equivalence-gamma}
Let $G$ be any abelian countable discrete group and $\pi : G \to \mathcal O(H_\R)$ any faithful orthogonal representation such that $\dim H_\R \geq 2$.

Then $\Gamma(H_\R)\dpr \rtimes_\pi G$ is a ${\rm II_1}$ factor which does not have property Gamma if and only if $\pi(G)$ is discrete in $\mathcal O(H_\R)$ with respect to the strong topology.
\end{cor}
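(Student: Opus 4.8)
The plan is to prove the two implications separately, using Theorem \ref{thmA} for one direction and a direct central-sequence construction for the other. Throughout I would record at the outset that, since $\pi$ is faithful, the free Bogoljubov action $\sigma_\pi$ is properly outer, so $M = \Gamma(H_\R)\dpr \rtimes_\pi G$ is a ${\rm II_1}$ factor (with separable predual, as $H_\R$ is separable and $G$ countable). Thus property Gamma is meaningful for $M$ regardless of whether $\pi(G)$ is discrete, and the only content of the statement is the equivalence between failure of Gamma and discreteness of $\pi(G)$.

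The ``if'' direction is immediate: if $\pi(G)$ is discrete in $\mathcal O(H_\R)$ with respect to the strong topology, then, since $\pi$ is faithful and $\dim H_\R \geq 2$, Theorem \ref{thmA} applies verbatim (abelianness of $G$ being unnecessary for it) and yields that $M$ does not have property Gamma.

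For the converse I would argue by contraposition: assuming $\pi(G)$ is \emph{not} discrete, I will produce a nontrivial central sequence, thereby showing that $M$ has property Gamma. Since $H_\R$ is separable, the strong topology on $\mathcal O(H_\R)$ is metrizable and $\pi(G)$ is a countable subgroup; non-discreteness forces the identity not to be isolated, so there is a sequence $g_n \in G \setminus \{e\}$ (faithfulness giving $\pi(g_n) \neq \id$) with $\pi(g_n) \to \id$ strongly on $H = H_\R \otimes_\R \C$. First I would check that the Koopman operators $\rho(g_n) = \mathcal F(\pi(g_n))$ satisfy $\rho(g_n) \to \id$ strongly on the full Fock space $\mathcal F(H)$: on each tensor power one has $\pi(g_n)^{\otimes k} \to \id$ strongly, and a dominated-convergence argument in $k$ (using the summable bound $\|\pi(g_n)^{\otimes k}\xi_k - \xi_k\|^2 \leq 4\|\xi_k\|^2$) upgrades this to strong convergence on $\mathcal F(H) = \C\Omega \oplus \bigoplus_{k \geq 1} H^{\otimes k}$. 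Combining $W(\eta)\Omega = \eta$, $\|W(\eta)\|_2 = \|\eta\|$ and $\sigma_\pi(g)(W(\eta)) = W(\rho(g)\eta)$, this gives $\lim_n \|\sigma_\pi(g_n)(x) - x\|_2 = 0$ for every $x \in \Gamma(H_\R)\dpr$.

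It then remains to promote this to asymptotic centrality of the unitaries $u_{g_n} \in \LL(G) \subset M$, and this is exactly where I would invoke that $G$ is abelian. Since $\LL(G)$ is then abelian, $u_{g_n}$ commutes with every $u_h$; and for $x \in \Gamma(H_\R)\dpr$ one has $u_{g_n} x - x u_{g_n} = (\sigma_\pi(g_n)(x) - x)\,u_{g_n}$, whose $\|\cdot\|_2$-norm tends to $0$ by the previous step. Expanding an arbitrary $y$ in the $\|\cdot\|_2$-dense $\ast$-algebra of finite sums $\sum_i x_i u_{h_i}$ ($x_i \in \Gamma(H_\R)\dpr$, $h_i \in G$), using the commutation with $\LL(G)$ and $\|u_{g_n}\|_\infty = 1$ to control the approximation, I would conclude that $\lim_n \|u_{g_n} y - y u_{g_n}\|_2 = 0$ for all $y \in M$. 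Since $g_n \neq e$ gives $\tau(u_{g_n}) = 0$, the sequence $(u_{g_n})$ witnesses property Gamma, completing the converse. I expect the main (and only essential) use of the abelianness hypothesis to be precisely the commutation of $u_{g_n}$ with $\LL(G)$: for non-abelian $G$ the $u_{g_n}$ typically fail to asymptotically commute with $\LL(G)$, which is the obstacle preventing the equivalence from holding in general and the reason Theorem \ref{thmA} furnishes only a sufficient condition.
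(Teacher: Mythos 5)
Your proposal is correct and follows essentially the same route as the paper: the ``if'' direction is exactly Theorem \ref{thmA}, and for the converse the paper likewise argues by contraposition, extracting $g_n \in G \setminus \{e\}$ with $\pi(g_n) \to 1$ strongly, noting that $\sigma_\pi(g_n) \to \id$ in $\Aut(\Gamma(H_\R)\dpr)$, and using abelianness of $G$ to conclude that $(u_{g_n})$ is a trace-zero central sequence witnessing property Gamma. The only difference is that you spell out the details the paper leaves implicit (the second-quantization/dominated-convergence argument for $\mathcal F(\pi(g_n)) \to \id$ and the density argument promoting commutation to all of $M$), which is fine.
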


\begin{proof}
Assume $\pi(G)$ is not discrete in $\mathcal O(H_\R)$ with respect to the strong topology. Let $g_n \in G \setminus \{e\}$ be a sequence such that $\pi(g_n) \to 1$ strongly. Then $\sigma_\pi(g_n) \to \id$ in $\Aut(\Gamma(H_\R)\dpr)$, that is, $\lim_n \|u_{g_n} x u_{g_n}^* - x\|_2 = 0$ for all $x \in \Gamma(H_\R)\dpr$. Since $G$ is abelian, $(u_{g_n})$ is a central sequence in $\Gamma(H_\R)\dpr \rtimes_\pi G$ with $\tau(u_{g_n}) = 0$ for all $n \in \N$. Therefore, $\Gamma(H_\R)\dpr \rtimes_\pi G$ has property Gamma.
\end{proof}

Observe that whenever a faithful orthogonal representation $\pi : G \to \mathcal O(H_\R)$ contains a mixing subrepresentation, then $\pi(G)$ is discrete in $\mathcal O(H_\R)$. Indeed, let $K_\R \subset H_\R$ be a nonzero closed $\pi(G)$-invariant subspace such that $\pi | K_\R$ is mixing. Let $(g_n)_n$ be a sequence in $G$ such that $\pi(g_n) \to 1$ strongly. We have in particular $\lim_n \| \pi(g_n) \xi - \xi \| = 0$ for all $\xi \in K_\R$. We claim that $\{ g_n : n \in \N \}$ is finite. Otherwise, we can find a subsequence $(g_{n_k})_k$ such that $g_{n_k} \to \infty$ in $G$. By the mixing property of $\pi$, we get $\lim_k \|\pi(g_{n_k})\xi - \xi \| = \sqrt{2} \|\xi\|$ for all $\xi \in K_\R$, which is a contradiction. Since $\{g_n : n \in \N\}$ is finite and $\pi(g_n) \to 1$ strongly and $\pi$ is faithful, we obtain that $g_n = e$ for $n \in \N$ large enough.

Proposition \ref{commutant} provides another sufficient condition which ensures that the crossed product ${\rm II_1}$ factor $\Gamma(H_\R)\dpr \rtimes_\pi G$ does not have property Gamma.

\begin{cor}\label{sufficient-nongamma-bis}
Let $G$ be any countable discrete group such that $\LL(G)$ does not have property Gamma and $\pi : G \to \mathcal O(H_\R)$ any infinite dimensional orthogonal representation. Then $\Gamma(H_\R)\dpr \rtimes_\pi G$ does not have property Gamma.
\end{cor}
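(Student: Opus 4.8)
The plan is to read off the result directly from Proposition~\ref{commutant}, which locates all central sequences of $M = \Gamma(H_\R)\dpr \rtimes_\pi G$ inside $\LL(G)^\omega$, combined with the standard ultrapower reformulation of property Gamma: for a ${\rm II_1}$ factor $Q$ with separable predual, $Q$ fails to have property Gamma if and only if $Q' \cap Q^\omega = \C 1$. The analytic heart of the matter is already contained in Proposition~\ref{commutant}, so the argument is short; the two points needing attention are the factoriality of $M$ and the passage between the net-of-unitaries definition of Gamma and the condition $Q' \cap Q^\omega = \C 1$.

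First I would verify that $M$ is a ${\rm II_1}$ factor. Since $\LL(G)$ has no property Gamma it is in particular a ${\rm II_1}$ factor, so $\mathcal Z(\LL(G)) = \C 1$. Viewing the center of $M$ through constant sequences, $\mathcal Z(M) = M' \cap M \subset M' \cap M^\omega \subset \LL(G)^\omega$ by Proposition~\ref{commutant}. A constant sequence represented by $z \in M$ which lies in $\LL(G)^\omega$ must already satisfy $z \in \LL(G)$: if $\lim_{n \to \omega}\|z - a_n\|_2 = 0$ with $a_n \in \LL(G)$, then applying $E_{\LL(G)}$ gives $\|z - E_{\LL(G)}(z)\|_2 = \|(z - a_n) - E_{\LL(G)}(z - a_n)\|_2 \leq 2\|z - a_n\|_2 \to 0$, so $z = E_{\LL(G)}(z) \in \LL(G)$. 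Thus $\mathcal Z(M) \subset \LL(G)$, and since $\mathcal Z(M)$ commutes with $\LL(G)$ we obtain $\mathcal Z(M) \subset \mathcal Z(\LL(G)) = \C 1$. As $M$ contains the diffuse subalgebra $\Gamma(H_\R)\dpr$ and carries a faithful normal trace, $M$ is a ${\rm II_1}$ factor.

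Next, fix a free ultrafilter $\omega \in \beta(\N) \setminus \N$ and let $x = (x_n) \in M' \cap M^\omega$. By Proposition~\ref{commutant} we have $x \in \LL(G)^\omega$, and since $x$ commutes with $\LL(G) \subset M$ it follows that $x \in \LL(G)' \cap \LL(G)^\omega$. Because $\LL(G)$ is a ${\rm II_1}$ factor without property Gamma, the ultrapower characterization yields $\LL(G)' \cap \LL(G)^\omega = \C 1$, so $x$ is scalar. Hence $M' \cap M^\omega = \C 1$, and applying the same characterization to the ${\rm II_1}$ factor $M$ shows that $M$ does not have property Gamma.

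Since Proposition~\ref{commutant} supplies the only nontrivial ingredient, there is no serious remaining obstacle; the proof amounts to assembling the inclusion $M' \cap M^\omega \subset \LL(G)^\omega$ with the hypothesis on $\LL(G)$, once factoriality of $M$ is in hand.
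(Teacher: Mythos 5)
Your proof is correct and follows essentially the same route as the paper: both rest on Proposition~\ref{commutant} to obtain $M' \cap M^\omega \subset \LL(G)' \cap \LL(G)^\omega$ and then invoke the ultrapower characterization of property Gamma (you argue directly from $\LL(G)' \cap \LL(G)^\omega = \C 1$, while the paper argues contrapositively via diffuseness of the relative commutant). Your additional verification that $M$ is a ${\rm II_1}$ factor is a sensible supplement that the paper leaves implicit, but it does not change the substance of the argument.
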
 

\begin{proof}
Put $M = \Gamma(H_\R)\dpr \rtimes_\pi G$. By Proposition \ref{commutant}, we have
$$M' \cap M^\omega = M' \cap \LL(G)^\omega \subset \LL(G)' \cap \LL(G)^\omega.$$
If $M$ has property Gamma then $M' \cap M^\omega$ is diffuse and so is $\LL(G)' \cap \LL(G)^\omega$. Therefore, $\LL(G)$ has property Gamma.
\end{proof}

\begin{rem}
In case the group $G$ is not inner amenable, Corollary \ref{sufficient-nongamma-bis} is a particular case of a more general phenomenon. Indeed, any trace-preserving action $G \curvearrowright Q$ of such a group $G$ on a ${\rm II_1}$ factor $Q$ which does not have property Gamma gives rise to a crossed product ${\rm II_1}$ factor $Q \rtimes G$ which does not have property Gamma either (see \cite[Corollary]{choda-full}).

We mention that recently, Vaes \cite{vaes-inner-amenable} discovered an example of an inner amenable group $G$ with infinite conjugacy classes for which $\LL(G)$ does not have property Gamma.
\end{rem}

\subsection{Spectral gap rigidity}

In this section, we use Popa's spectral gap rigidity principle \cite{popasup} (see also \cite[Theorem 4.3]{peterson-L2}) to prove that the malleable deformation $(\theta_t)$ introduced in Section \ref{deformation} must converge uniformly on the unit ball of the relative commutant of {\em large} subalgebras of $\Gamma(H_\R)\dpr \rtimes_\pi G$.

We keep the same notation as in Section \ref{deformation}. For every $t \in \R$, denote by $(\theta_t^\omega)$ the unique one-parameter family of $\ast$-isomorphisms $\theta_t^\omega : M^\omega \to \widetilde M^\omega$ such that $\theta_t^\omega((x_n)) = (\theta_t(x_n))$ for all $(x_n) \in M^\omega$. Note however that the map $\R \to \Aut(\widetilde M^\omega) : t \mapsto \theta_t^\omega$ is quite discontinuous in general.

\begin{theo}\label{uniform-spectral-gap}
Let $G$ be any countable discrete group and $\pi : G \to \mathcal{O}(H_\R)$ any orthogonal representation. Put $M = \Gamma(H_\R)\dpr \rtimes_\pi G$. Let $p \in M$ be a nonzero projection. Let $P \subset pMp$ be a von Neumann subalgebra. Then at least one of the following conclusions holds true:
\begin{itemize}
\item There exists a nonzero projection $z \in \mathcal Z(P' \cap pMp)$ such that $Pz$ is amenable relative to $\LL(G)$ inside $M$.
\item The deformation $(\theta_t^\omega)$ converges uniformly to $\id$ in $\|\cdot\|_2$ on $(P' \cap p M^\omega p)_1$.
\end{itemize}
\end{theo}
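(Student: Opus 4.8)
The plan is to argue by contraposition. I assume that the first conclusion fails, that is, $Pz$ is \emph{not} amenable relative to $\LL(G)$ inside $M$ for every nonzero projection $z \in \mathcal Z(P' \cap pMp)$, and I show that $(\theta_t^\omega)$ converges uniformly to $\id$ in $\|\cdot\|_2$ on $(P' \cap pM^\omega p)_1$. By Remark \ref{remark}$(3)$ (taking central supports), this assumption is equivalent to the statement that $Pp_1$ is not amenable relative to $\LL(G)$ for \emph{any} nonzero projection $p_1 \in P' \cap pMp$.

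The key input is Popa's spectral gap principle \cite{popasup}, applied to the deformation of Section \ref{deformation}. First I would record the standard bimodule description of the amalgamated free product $\widetilde M = M \ast_{\LL(G)} M$: the $M$-$M$-bimodule $\LL^2(\widetilde M) \ominus \LL^2(M)$ is weakly contained in the coarse $M$-$M$-bimodule $\LL^2(M) \otimes_{\LL(G)} \LL^2(M)$ over $\LL(G)$ (see \cite{ipp}). Combining this with the Ozawa--Popa characterization of relative amenability (Theorem 2.1 in \cite{ozawa-popa}), the standing assumption that no corner of $P$ is amenable relative to $\LL(G)$ produces a \emph{spectral gap estimate}: there exist finitely many unitaries $u_1, \dots, u_m \in \mathcal U(P)$ and a constant $\kappa > 0$ such that
$$\|\xi\|_2^2 \leq \kappa \sum_{i = 1}^m \|u_i \xi - \xi u_i\|_2^2, \quad \forall \xi \in \LL^2(\widetilde M) \ominus \LL^2(M).$$
This is the step I expect to be the main obstacle, since it is precisely the delicate direction of the spectral gap lemma: its failure would yield a net of $P$-almost-central unit vectors in the coarse-over-$\LL(G)$ bimodule, from which one must extract a nonzero projection $p_1 \in P' \cap pMp$ with $Pp_1$ amenable relative to $\LL(G)$, contradicting the hypothesis. (The weak containment above lets the estimate, once established on the coarse bimodule, descend to $\LL^2(\widetilde M) \ominus \LL^2(M)$, since weak containment transfers spectral gap inequalities; a routine reduction handles the compression by $p$, using that $\theta_t(p) \to p$.)

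Granting the spectral gap estimate, the rest is a direct computation with the transversal vector. Fix $b \in (P' \cap pM^\omega p)_1$ represented by $b = (b_n)$ with $b_n \in (pMp)_1$, and set $\eta_{t,n} = \theta_t(b_n) - E_M(\theta_t(b_n)) \in \LL^2(\widetilde M) \ominus \LL^2(M)$, with ultrapower limit $\eta_t = (\eta_{t,n})_n = \theta_t^\omega(b) - E_{M^\omega}(\theta_t^\omega(b))$. Since $b$ commutes with every $u \in \mathcal U(P)$ and $\theta_t^\omega$ is a $\ast$-isomorphism, $\theta_t^\omega(b)$ commutes \emph{exactly} with $\theta_t(u)$; writing $u_i \theta_t^\omega(b) - \theta_t^\omega(b) u_i = (u_i - \theta_t(u_i))\theta_t^\omega(b) + \theta_t^\omega(b)(\theta_t(u_i) - u_i)$ and using $\|\theta_t^\omega(b)\|_\infty \leq 1$ together with the $E_{M^\omega}$-bimodularity of the conditional expectation, I get $\|u_i \eta_t - \eta_t u_i\|_2 \leq 2\|u_i - \theta_t(u_i)\|_2$ for each $i$. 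Applying the spectral gap estimate to each representative $\eta_{t,n}$ and passing to the limit along $\omega$ therefore gives
$$\|\eta_t\|_2^2 \leq \kappa \sum_{i = 1}^m \|u_i \eta_t - \eta_t u_i\|_2^2 \leq 4\kappa \sum_{i = 1}^m \|u_i - \theta_t(u_i)\|_2^2.$$

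Finally, the transversality inequality of Proposition \ref{calculation}$(2)$, applied to each $b_n$ and passed to the limit, yields $\tfrac12 \|b - \theta_t^\omega(b)\|_2^2 \leq \|\eta_t\|_2^2$, so that
$$\|b - \theta_t^\omega(b)\|_2^2 \leq 8\kappa \sum_{i = 1}^m \|u_i - \theta_t(u_i)\|_2^2.$$
The right-hand side is independent of $b$ and tends to $0$ as $t \to 0$ by malleability of $(\theta_t)$. Taking the supremum over all $b \in (P' \cap pM^\omega p)_1$ shows that $(\theta_t^\omega)$ converges uniformly to $\id$ on the unit ball, which is the second conclusion and completes the argument.
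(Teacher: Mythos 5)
Your overall strategy---running the paper's spectral gap argument in contrapositive form---is sound, and your commutator and transversality estimates at the end are correct. The genuine gap is the spectral gap inequality itself. First, as literally stated it is false whenever $p \neq 1$: any nonzero vector $\xi \in (1-p)\bigl(\LL^2(\widetilde M)\ominus\LL^2(M)\bigr)(1-p)$ satisfies $u_i \xi = 0 = \xi u_i$ for every $u_i \in \mathcal U(P) \subset pMp$, so the right-hand side vanishes while $\|\xi\|_2 \neq 0$; at minimum the estimate must be restricted to $p\bigl(\LL^2(\widetilde M)\ominus\LL^2(M)\bigr)p$. More seriously, even after this compression the derivation you sketch does not go through: if the inequality fails for every finite set of unitaries and every $\kappa$, you obtain a net of unit vectors that is almost central under $P$, but almost centrality \emph{alone} does not produce a corner of $P$ that is amenable relative to $\LL(G)$. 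Both Ozawa--Popa's characterization of relative amenability (\cite[Theorem 2.1]{ozawa-popa}) and the extraction lemma that the paper actually invokes, \cite[Lemma 2.3]{ioana-cartan}, require in addition the subtracial condition $\limsup_i \|x\xi_i\|_2 \leq \|x\|_2$ for all $x \in M$; it is this condition that forces the limit $P$-central state on $p\langle M, e_{\LL(G)}\rangle p$ to restrict on $pMp$ to a functional dominated by the trace (hence normal), from which a relatively amenable corner can be cut out. Without it, the restriction can be a singular $P$-central state and no conclusion follows. So the clean linear estimate for all vectors of the bimodule is not a consequence of your hypothesis, and this is precisely the ``main obstacle'' you flagged but did not overcome.

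The gap is localized and repairable, because the only vectors you ever feed into the estimate, $\eta_{t,n} = \theta_t(b_n) - E_M(\theta_t(b_n))$ with $\|b_n\|_\infty \leq 1$, do satisfy subtraciality: $\|x\eta_{t,n}\|_2 = \|(1-E_M)(x\theta_t(b_n))\|_2 \leq \|x\|_2$ for all $x \in M$. The statement you should prove (and all you need) is: for every $c>0$ there exist $u_1,\dots,u_m \in \mathcal U(P)$ and $\delta>0$ such that every subtracial $\xi \in \LL^2(\widetilde M)\ominus\LL^2(M)$ with $\max_i\|u_i\xi - \xi u_i\|_2 \leq \delta$ has $\|\xi\|_2 \leq c$. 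This $\varepsilon$--$\delta$ form (weaker than a linear inequality with one fixed $\kappa$, which you cannot expect) is exactly the contrapositive of \cite[Lemma 2.3]{ioana-cartan} applied to $\LL^2(\widetilde M)\ominus\LL^2(M) \cong \LL^2(M)\otimes_{\LL(G)}\mathcal K$, and your final computation goes through verbatim with it. The paper's own proof is the same argument run by contradiction: it negates uniform convergence, forms $\xi_i = \theta_{t_k}(y_{k,n}) - (E_M\circ\theta_{t_k})(y_{k,n})$, verifies the three hypotheses of Ioana's lemma (subtraciality, $\liminf_i\|\xi_i\|_2 \geq c/\sqrt 2$ via Proposition \ref{calculation}, and almost $P$-centrality via the same commutator estimate you use), and concludes the existence of the projection $z$. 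Once your inequality is replaced by the restricted form above, your route and the paper's coincide.
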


\begin{proof}
Assume that the deformation $(\theta_t^\omega)$ does not converge uniformly to $\id$ in $\|\cdot\|_2$ on $(P' \cap p M^\omega p)_1$. Then there exist $c > 0$, a sequence $(t_k)$ of reals such that $\lim_{k \to \infty} t_k = 0$ and a sequence of elements $(y_k)$ in $(P' \cap pM^\omega p)_1$ such that $\inf_{k \in \N} \|y_k - \theta_{t_k}^\omega(y_k)\|_2 > c$. Write $y_k = (y_{k, n}) \in (P' \cap pM^\omega p)_1$ such that $\lim_{n \to \omega} \|b y_{k, n} - y_{k, n} b\|_2 = 0$ for all $b \in P$ and all $k \in \N$.

Let $I$ be the directed set of all $(\mathcal F, \varepsilon)$, with $\varepsilon >0$ and $\mathcal F \subset (P)_1$ finite subset. Let $i = (\mathcal F, \varepsilon) \in I$. Choose $k \in \N$ such that $\|a - \theta_{t_k}(a)\|_2 \leq \varepsilon/3$ for all $a \in \mathcal F$. Then choose $n \in \N$ such that $\|y_{k, n} - \theta_{t_k}(y_{k, n})\|_2 \geq c$ and $\| a y_{k, n} - y_{k, n} a\|_2 \leq \varepsilon/3$ for all $a \in \mathcal F$. 

Put $\xi_i := \theta_{t_k}(y_{k, n}) - (E_M \circ \theta_{t_k})(y_{k, n}) \in \LL^2(\widetilde M \ominus M)$. By Propostion $\ref{calculation}$, we have 
$$\|\xi_i\|_2 \geq \frac{1}{\sqrt{2}} \|y_{k, n} - \theta_{t_k}(y_{k, n})\|_2 \geq \frac{c}{\sqrt{2}}.$$
For all $x \in M$, we have
$$\|x \xi_i\|_2 = \|(1 - E_M) (x \theta_{t_k} (y_{k, n})) \|_2 \leq \|x \theta_{t_k}(y_{k, n})\|_2 \leq \|x\|_2.$$
By Popa's spectral gap argument \cite{popasup}, for all $a \in \mathcal F$, we have
\begin{align*}
\| a \xi_i - \xi_i a \|_2 &= \|(1 - E_M) (a \theta_{t_k}( y_{k, n} ) - \theta_{t_k}( y_{k, n}) a)\|_2 \leq \|a \theta_{t_k}( y_{k, n}) - \theta_{t_k}( y_{k, n}) a\|_2 \\
&\leq 2 \|a - \theta_{t_k}(a)\|_2 + \|a y_{k, n} - y_{k, n} a\|_2 \leq \varepsilon.
\end{align*}

Hence $\xi_i \in \LL^2(\widetilde M \ominus M)$ is a net of vectors satisfying $\limsup_i \|x \xi_i\|_2 \leq \|x\|_2$ for all $x \in M$, $\liminf_i \|\xi_i\|_2 \geq~\frac{c}{\sqrt{2}}$ and $\lim_i \|a \xi_i - \xi_i a\|_2 = 0$ for all $a \in P$. Since $\widetilde M = M \ast_{\LL(G)} \left( \Gamma(H_\R)\dpr \rtimes_\pi G\right)$, as $M$-$M$-bimodules, we have $\LL^2(\widetilde M \ominus M) \cong \LL^2(M) \otimes_{\LL(G)} \mathcal K$ for some $\LL(G)$-$M$-bimodule $\mathcal K$. By \cite[Lemma 2.3]{ioana-cartan}, there exists a nonzero projection $z \in \mathcal Z(P' \cap pMp)$ such that $Pz$ is amenable relative to $\LL(G)$ inside~ $M$.
\end{proof}

A straightforward combination of Theorems $\ref{intertwining1}$ and $\ref{uniform-spectral-gap}$ yields the following corollary:

\begin{cor}\label{corollary-spectral-gap}
Let $G$ be any countable discrete group and $\pi : G \to \mathcal{O}(H_\R)$ any orthogonal representation. Put $M = \Gamma(H_\R)\dpr \rtimes_\pi G$. Let $p \in M$ be a nonzero projection and $P \subset pMp$ a von Neumann subalgebra. Then at least one of the following conclusions holds true:
\begin{itemize}
\item There exists a nonzero projection $z \in \mathcal Z(P' \cap pMp)$ such that $Pz$ is amenable relative to $\LL(G)$ inside $M$.
\item $P' \cap pMp \preceq_M \LL(G)$.
\end{itemize}
\end{cor}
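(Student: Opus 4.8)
The plan is to feed the dichotomy produced by Theorem \ref{uniform-spectral-gap} directly into the intertwining criterion of Theorem \ref{intertwining1}. First I would apply Theorem \ref{uniform-spectral-gap} to the inclusion $P \subset pMp$. This yields two alternatives: either there is a nonzero projection $z \in \mathcal Z(P' \cap pMp)$ with $Pz$ amenable relative to $\LL(G)$ inside $M$ — which is verbatim the first conclusion we are after — or the deformation $(\theta_t^\omega)$ converges uniformly to $\id$ in $\|\cdot\|_2$ on the unit ball $(P' \cap pM^\omega p)_1$. So I may assume the latter and aim to deduce $P' \cap pMp \preceq_M \LL(G)$, which is the second conclusion.

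The key reduction is to convert this uniform convergence on the ultrapower relative commutant into the quantitative hypothesis of Theorem \ref{intertwining1} for the ordinary relative commutant $Q := P' \cap pMp$. Embedding $Q$ into $P' \cap pM^\omega p$ as constant sequences, and noting that $\theta_t^\omega$ restricts to $\theta_t$ on such constants, the assumed uniform convergence holds in particular on $(Q)_1$. For any unitary $u \in \mathcal U(Q)$, since $\theta_t$ is trace preserving and $\tau(\theta_t(u) u^*)$ is real and nonnegative by Proposition \ref{calculation}(1), one has
\[
\|u - \theta_t(u)\|_2^2 = 2\tau(p) - 2\tau(\theta_t(u) u^*).
\]
Choosing $t \in (-1, 0) \cup (0, 1)$ small enough that $\|y - \theta_t(y)\|_2^2 \leq \tau(p)$ for every $y \in (Q)_1$ — which is possible by the uniform convergence — I obtain $\tau(\theta_t(u) u^*) \geq \tau(p)/2 =: c > 0$ uniformly over all $u \in \mathcal U(Q)$.

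With this bound in hand, I would apply Theorem \ref{intertwining1} to $Q = P' \cap pMp \subset pMp$ with the constants $c$ and $t$ just produced, obtaining exactly $P' \cap pMp \preceq_M \LL(G)$. I do not expect any serious obstacle, since the statement is genuinely a formal combination of the two preceding results; the only points requiring care are the passage from the ultrapower statement on $(P' \cap pM^\omega p)_1$ to the ordinary statement on $(P' \cap pMp)_1$ via constant sequences, and the conversion of $\|\cdot\|_2$-smallness into the lower bound on $\tau(\theta_t(u) u^*)$, for which the reality and positivity of that functional (Proposition \ref{calculation}(1)) is precisely what makes the elementary inequality go through.
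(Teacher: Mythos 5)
Your proposal is correct and is exactly the paper's argument: the paper offers no proof beyond the remark that the corollary is "a straightforward combination of Theorems \ref{intertwining1} and \ref{uniform-spectral-gap}," and your write-up supplies precisely that combination. The details you flag — restricting the ultrapower convergence to constant sequences, and using the positivity of $\tau(\theta_t(u)u^*)$ from Proposition \ref{calculation}(1) to convert $\|u-\theta_t(u)\|_2^2 \leq \tau(p)$ into the uniform lower bound $\tau(\theta_t(u)u^*) \geq \tau(p)/2$ required by Theorem \ref{intertwining1} — are the right ones and are handled correctly.
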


\section{Regular amenable subalgebras in ${\rm II_1}$ factors $\Gamma(H_\R)\dpr \rtimes_\pi G$}

The aim of this section is to prove Theorem \ref{thmB} and Corollary \ref{corC}.

\begin{proof}[Proof of Theorem \ref{thmB}]
Let $M = \Gamma(H_\R)\dpr \rtimes_\pi G$ and $p \in M$ a nonzero projection. Let $A \subset pMp$ be a von Neumann subalgebra that is amenable relative to $\LL(G)$ inside $M$ and denote $P =\mathcal N_{pMp}(A)\dpr$. Our aim is to show that $A \preceq_M \LL(G)$ or $P$ is amenable relative to $\LL(G)$ inside $M$. 

Put
$$\widetilde M = \Gamma(H_\R \oplus H_\R)\dpr \rtimes_{\pi \oplus \pi} G = \left( \Gamma(H_\R)\dpr \rtimes_{\pi} G \right) \ast_{\LL(G)} \left( \Gamma(H_\R)\dpr \rtimes_{\pi} G \right).$$
We identify $M$ with the left copy of $\Gamma(H_\R)\dpr \rtimes_{\pi} G$ and $\theta_1(M)$ with the right copy of $\Gamma(H_\R)\dpr \rtimes_{\pi} G$ inside the amalgamated free product $\widetilde M$. Note that we now use the malleable deformation $(\theta_t)$ from Section \ref{deformation}. Choose $t \in (0, 1)$ and put 
$$\mathcal A = \theta_t(A) \subset \theta_t(p) \widetilde M \theta_t(p) \;  \text{ and } \; \mathcal P = \mathcal N_{\theta_t(p)\widetilde M \theta_t(p)}(\mathcal A)\dpr.$$
Observe that $\theta_t(P) \subset \mathcal P$.

Since $A$ is amenable relative to $\LL(G)$ inside $M$ and since $M \subset \widetilde M$ is a tracial inclusion, it follows that $A$ is amenable relative to $\LL(G)$ inside $\widetilde M$. Since $\theta_t \in \Aut(\widetilde M)$ and $\theta_t (\LL(G)) = \LL(G)$, we get that $\mathcal A$ is amenable relative to $\LL(G)$ inside $\widetilde M$. By \cite[Theorem A]{Va13}, one of the following conditions holds true:
\begin{enumerate}
\item $\mathcal A \preceq_{\widetilde M} \LL(G)$.
\item $\mathcal P \preceq_{\widetilde M} M$ or $\mathcal P \preceq_{\widetilde M} \theta_1(M)$.
\item $\mathcal P$ is amenable relative to $\LL(G)$ inside $\widetilde M$.
\end{enumerate}

By Theorem $\ref{intertwining2}$, Condition $(1)$ leads to $A \preceq_M \LL(G)$. Observe that since $\theta_1 \in \Aut(\widetilde M)$, we have $\mathcal P \preceq_{\widetilde M} \theta_1(M)$ if and only if $\theta_{-1}(\mathcal P) \preceq_{\widetilde M} M$. So, Condition $(2)$ leads to $\theta_t(P) \preceq_{\widetilde M} M$ or $\theta_{t - 1}(P) \preceq_{\widetilde M} M$. Therefore by Theorem $\ref{intertwining2}$, Condition $(2)$ always leads to $P \preceq_M \LL(G)$ and hence $A \preceq_M \LL(G)$.

Finally assume that Condition $(3)$ holds. Since $\theta_t(P) \subset \mathcal P$ and $\theta_t(\LL(G)) = \LL(G)$, we have that $P$ is amenable relative to $\LL(G)$ inside $\widetilde M$. This means that the $p\widetilde M p$-$P$-bimodule $p\LL^2(\widetilde M)p$ is weakly contained in the $p \widetilde M p$-$P$-bimodule $p \LL^2(\widetilde M) \otimes_{\LL(G)} \overline{\LL^2(\widetilde M)} p$ and thus the $pMp$-$P$-bimodule $p\LL^2(M)p \subset p\LL^2(\widetilde M)p$ is weakly contained in the $pMp$-$P$-bimodule $p\LL^2(\widetilde M) \otimes_{\LL(G)} \overline{\LL^2(\widetilde M)}p$. Since as $M$-$M$-bimodules, we have $\LL^2(\widetilde M \ominus M) \cong \LL^2(M) \otimes_{\LL(G)} \mathcal K$ for some $\LL(G)$-$M$-bimodule $\mathcal K$, it follows that, as $pMp$-$pMp$-bimodules, we have
$$p \LL^2(\widetilde M) \otimes_{\LL(G)} \overline{\LL^2(\widetilde M)} p \cong p\LL^2(M) \otimes_{\LL(G)} \mathcal L.$$
for some $\LL(G)$-$P$-bimodule $\mathcal L$. Thus, the $pMp$-$P$-bimodule $p\LL^2(M)p$ is weakly contained in the $pMp$-$P$-bimodule $p\LL^2(M) \otimes_{\LL(G)} \mathcal L$. By \cite[Proposition 2.4(4)]{popa-vaes-cartan1}, it follows that $P$ is amenable relative to $\LL(G)$ inside $M$. This finishes the proof of Theorem \ref{thmB}.
\end{proof}

\begin{proof}[Proof of Corollary \ref{corC}]
Put $M = \Gamma(H_\R)\dpr \rtimes_\pi G$ and let $A \subset M$ be an amenable regular von Neumann subalgebra. Since $\dim H_\R \geq 2$, $\Gamma(H_\R)\dpr$ is a nonamenable ${\rm II_1}$ factor and so $M$ is not amenable relative to $\LL(G)$. Theorem \ref{thmB} implies that $A \preceq_M \LL(G)$.

$(1)$ Assume $\pi$ contains a direct sum of at least two finite dimensional subrepresentations. Write $\pi = \pi_1 \oplus \pi_2 \oplus \pi_3$, with $\pi_1$ and $\pi_2$ finite dimensional orthogonal representations. If $A \subset M$ is a Cartan subalgebra, we have $A \preceq_M \LL(G)$. Observe that for all $i \in \{1, 2\}$, since $\dim \pi_i$ is finite, the free Bogoljubov action $G \curvearrowright \Gamma(H_\R^{(i)})\dpr$ extends to a compact group action $\mathbf G \curvearrowright \Gamma(H_\R^{(i)})\dpr$. 

It follows from \cite[Corollary 4.2]{HLS} that for any trace-preserving action $\mathbf G \curvearrowright Q$ of a second countable compact group $\mathbf G$ on a nonamenable ${\rm II_1}$ factor $Q$ with separable predual, the fixed point algebra $Q^{\mathbf G}$ is necessarily diffuse. Since the free product of two diffuse von Neumann algebras is a nonamenable ${\rm II_1}$ factor, we get that $\LL(G)' \cap M$ has no amenable direct summand. Since $A \preceq_M \LL(G)$, we have $ \LL(G)' \cap M \preceq_M A' \cap M$ by \cite[Lemma 3.5]{vaes-bimodules}. However, since $A' \cap M = A$, this is a contradiction.

$(2)$ Assume $\pi$ contains a mixing subrepresentation, that is, let $K_\R \subset H_\R$ be a nonzero closed $\pi(G)$-invariant subspace such that $\pi | K_\R$ is mixing. Put $\pi_{H \ominus K} = \pi | H_\R \ominus K_\R$ and $N = \Gamma(H_\R \ominus K_\R)\dpr \rtimes_{\pi_{H \ominus K}} G$. If $A \subset M$ is a diffuse regular amenable subalgebra, we have $A \preceq_M \LL(G)$, whence $A \preceq_M N$. Since the inclusion $N \subset M$ is mixing by Proposition \ref{mixing-inclusion} and $M = \mathcal N_M(A)\dpr$, Corollary \ref{mixing-consequence} implies that $M \preceq_M N$. This means that $N p \subset pMp$ has finite index for some nonzero projection $p \in N' \cap M$. Since the inclusion $N \subset M$ is mixing, we moreover have $N' \cap M = \mathcal Z(N)$ by Corollary \ref{weak-mixing-consequence}, whence $p \in \mathcal Z(N)$. Since $N p \subset pMp$ has finite index, $N p$ is quasi-regular inside $pMp$ (see e.g.\ \cite[Definition/Proposition A.2]{vaes-bimodules}).

Since the inclusion $Np \subset pMp$ is mixing, we have $\QN_{pMp}(Np)\dpr = Np$ by Corollary \ref{weak-mixing-consequence}. Therefore, $Np = pMp$. Since $K_\R \neq 0$, the tracial von Neumann algebra $\Gamma(K_\R)\dpr$ is diffuse. Choose a Haar unitary $u \in \Gamma(K_\R)\dpr$. Since $\Gamma(K_\R)\dpr \ominus \C \subset M \ominus N$, we have $p u^k p = 0$ for all $k \in \Z \setminus \{0\}$, whence the projections $(u^k p u^{-k})_{k \in \Z}$ are pairwise orthogonal in $M$. Since $M$ is a tracial von Neumann algebra, we necessarily have $p = 0$. This is a contradiction and finishes the proof of Corollary \ref{corC}. 
\end{proof}

\section{Maximal amenable and maximal Gamma extensions}\label{maximal-extensions}

The aim of this section is to prove Theorems \ref{thmD} and \ref{thmE}. Theorem \ref{thmD} will be a consequence of the following more general result.

\begin{theo}\label{general-result}
Let $A \subset N \subset (M, \tau)$ be tracial von Neumann algebras such that $M$ has separable predual. Assume that the following conditions hold:
\begin{enumerate}
\item $A$ is amenable.
\item The inclusion $N \subset M$ is weakly mixing through $A$.
\item The inclusion $N \subset M$ has the asymptotic orthogonality property relative to $A$.
\end{enumerate}
Then for any  intermediate amenable von Neumann subalgebra $A \subset P \subset M$, we have $P \subset N$.
\end{theo}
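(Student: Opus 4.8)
The plan is to transpose Popa's maximal--amenability argument from \cite{popa-amenable} into the relative setting, feeding the three hypotheses into his scheme exactly where he uses singularity, mixing, and the asymptotic orthogonality property. Throughout I fix a free ultrafilter $\omega$ and argue inside $M^\omega$. First I would reduce the statement to a single non-vanishing assertion: it suffices to show $E_N(b) = b$ for every $b \in P$. If this fails, then after replacing $b$ by $b - E_N(b)$ and normalizing I may fix $b \in P$ with $\|b\|_\infty \le 1$, $\|b\|_2 = 1$ and $E_N(b) = 0$, so that $b \in M \ominus N$; the goal becomes to derive $\|b\|_2 = 0$, a contradiction.

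The core of the argument is to manufacture, out of the amenability of $P$, an element of $(M^\omega \ominus N^\omega) \cap A'$ that still \emph{remembers} $b$. Since $P$ is amenable with separable predual, Connes' theorem \cite{connes76} furnishes a net of unit vectors $(\xi_n)$ in the coarse $P$-$P$-bimodule $\overline{\LL^2(P)} \otimes \LL^2(P)$ with $\lim_n \langle x \xi_n y, \xi_n\rangle = \tau(xy)$ for all $x, y \in P$; in particular $\lim_n \|x \xi_n - \xi_n x\|_2 = 0$, so the $\xi_n$ are asymptotically $P$-central and tracial. Because $A \subseteq P$, they are \emph{a fortiori} asymptotically $A$-central. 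I would then transport this data into the ultrapower to produce a nonzero, asymptotically tracial $c \in A' \cap M^\omega$ which, coming from $P$-centrality and $b \in P$, still commutes with $b$.

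Next I would use the weak mixing of $N \subseteq M$ through $A$ to arrange that $c$ is orthogonal to $N^\omega$, i.e. $c \in (M^\omega \ominus N^\omega) \cap A'$: the unitaries $u_k \in \mathcal U(A)$ satisfying $\|E_N(x u_k y)\|_2 \to 0$ for $x,y \in M \ominus N$ are precisely what let me project away the $N^\omega$-component of $c$ while preserving its $A$-centrality, with amenability of $A$ (hypothesis (1)) guaranteeing the averaging stays compatible with the trace. With such a $c$ in hand, the relative asymptotic orthogonality property (hypothesis (3)) applied to $a = b \in M \ominus N$ and $x = y = c$ gives $bc \perp cb$ in $\LL^2(M^\omega)$. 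Since $c$ commutes with $b$ we have $bc = cb$, forcing $\|bc\|_2 = 0$; the tracial normalization of $c$ then gives $\|bc\|_2 = \|b\|_2 \neq 0$, the desired contradiction, so $P \subseteq N$.

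The step I expect to be the main obstacle is the manufacturing step in the middle: realizing the abstract almost-central net $(\xi_n)$, which lives in a coarse bimodule and not a priori inside $\LL^2(M^\omega)$, as an honest element $c$ of $(M^\omega \ominus N^\omega) \cap A'$ that is \emph{simultaneously} $A$-central, orthogonal to $N^\omega$, and commuting with $b$. Getting all three at once is delicate because the weak-mixing averaging over the $u_k \in A$ threatens to destroy the $P$-centrality needed for $[c,b]=0$; reconciling this requires a careful Connes-fusion/basic-construction model adapted to $A$ and is exactly the point where all three hypotheses are invoked together. This is also where the relative version genuinely departs from Popa's original masa argument, in which $N = A$ and the orthogonalization is automatic.
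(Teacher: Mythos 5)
Your overall skeleton---run Popa's asymptotic orthogonality scheme inside $M^\omega$ and derive a contradiction from an element of $(M^\omega \ominus N^\omega) \cap A'$ commuting with a fixed $b \in P$ satisfying $E_N(b)=0$---is the right inspiration, and your punchline (AOP forces $bc \perp cb = bc$, hence $bc = 0$) is exactly how the contradiction is reached in the paper. But the step you yourself flag as the main obstacle is a genuine gap, and the route you sketch for it would fail. Connes' characterization of amenability produces almost-central, almost-tracial unit vectors in the \emph{coarse} bimodule $\overline{\LL^2(P)} \otimes \LL^2(P)$; these vectors are blind to the embedding $P \subset M$ and there is no mechanism for ``transporting'' them into $M^\omega$: amenability is an internal approximation property of $P$, whereas producing a nonzero element of $P' \cap M^\omega$ orthogonal to $N^\omega$ is a statement about the position of $P$ inside $M$. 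Likewise, weak mixing through $A$ cannot be used to ``project away the $N^\omega$-component'' of such an element: subtracting $E_{N^\omega}(c)$ may well give $0$, and nothing preserves the commutation with $b$. In fact your single-element framing cannot work even in principle: the existence of a unitary $c \in P' \cap M^\omega$ with $E_{N^\omega}(c) = 0$ is essentially the negation of a Popa intertwining condition for (a corner of) $P$ into $N$, and this global non-embedding cannot be inferred from the existence of one element $b \in P \ominus N$---a corner of $P$ may embed into $N$ while another part sticks out. This is why no amount of Connes-fusion bookkeeping will reconcile your three requirements on $c$; the orthogonality to $N^\omega$ has to be arranged \emph{by construction} under a non-intertwining hypothesis, not extracted afterwards by averaging.

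What the paper does instead is a two-stage argument whose machinery is absent from your proposal. First, a maximality device: let $z \in \mathcal Z(P' \cap N)$ be the largest projection with $Pz \subset zNz$, and set $Q = Pz^\perp$. Amenability of $P$ is converted, via Connes' theorem \cite{connes76}, into \emph{hyperfiniteness}: one writes $Q = \bigvee_k Q_k$ with finite-dimensional $Q_k$ such that each $Q_k' \cap Q \subset Q$ has finite index. Assuming $Q \npreceq_M N$, Remark \ref{remark-intertwining} gives $Q_k' \cap Q \npreceq_M N$ for every $k$, so Popa's intertwining criterion lets one \emph{choose} unitaries $u_k \in \mathcal U(Q_k' \cap Q)$ with $\|E_N(u_k)\|_2 \leq \tfrac1k \|z^\perp\|_2$; then $u = (u_k)$ lies in $\mathcal U(Q' \cap Q^\omega) \cap (M^\omega \ominus N^\omega) \cap P'$ and plays the role of your $c$. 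The AOP plus the commutation $yu = uy$ yields $\|E_N(y)u - uE_N(y)\|_2^2 = 2\|y - E_N(y)\|_2^2$ for $y \in Q$, and plugging in $y = u_k$ for large $k$ gives the contradiction. Note, however, that this only proves the intertwining $Q \preceq_M N$, not containment; a second stage is still needed: the columns $v_i$ of the intertwining partial isometry satisfy $A v_i \subset \sum_j v_j N$, so weak mixing through $A$ (Corollary \ref{weak-mixing-consequence}) forces $v_i \in N$, whence $Qvv^* \subset vv^* N vv^*$, contradicting the maximality of $z$ and giving $z = 1$. This is where hypothesis (2) is genuinely used---to upgrade intertwining to containment---not, as in your sketch, to orthogonalize ultrapower elements.
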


\begin{proof}
Let $A \subset P \subset M$ be any intermediate amenable von Neumann subalgebra. Our aim is to show that in fact $P \subset N$. 

Since the inclusion $N \subset M$ is weakly mixing through $A$, we have $P' \cap M \subset A' \cap M \subset N$ by Corollary $\ref{weak-mixing-consequence}$, whence $P' \cap M = P' \cap N$. The set of projections $p \in P' \cap N$ with the property that $P p \subset pNp$, attains its maximum in a projection $z$ that belongs to $\mathcal Z(P' \cap N)$. Put $z^\perp = 1 - z \in \mathcal Z(P' \cap N)$. Our aim is to show that $z^\perp = 0$.

Assume by contradiction that $z^\perp \neq 0$. Put $Q = P z^\perp$. We first show that $Q \preceq_M N$. Assume by contradiction that $Q \npreceq_M N$. Since $Q$ is amenable and thus hyperfinite by Connes' result \cite{connes76}, we can write $Q = \bigvee_k Q_k$ where $(Q_k)_{k \geq 1}$ is an increasing sequence of unital finite dimensional $\ast$-subalgebras of $Q$ such that the inclusion $Q'_k \cap Q \subset Q$ has finite index for all $k \geq 1$. 

Indeed, let $q_n \in \mathcal Z(Q)$ be pairwise orthogonal central projections in $Q$ such that $\sum_{n \in \N} q_n = 1_Q = z^\perp$ and 
$$Q q_0 = \mathcal Z_0 \ovt R \; \mbox{ and } \; Q q_n = \mathcal Z_n \otimes \mathbf M_n(\C),$$
with $\mathcal Z_n$ an abelian von Neumann algebra for all $n \in \N$ and $R$ the unique hyperfinite ${\rm II_1}$ factor. So, $Q q_0$ is the direct summand of type ${\rm II_1}$ and $Q q_n$ is the homogeneous direct summand of type ${\rm I}_n$. For every $n \in \N$, let $(\mathcal Z_n^{(k)})_{k \geq 1}$ be an increasing sequence of unital finite dimensional $\ast$-subalgebras of $\mathcal Z_n$ such that $\mathcal Z_n = \bigvee_k \mathcal Z_n^{(k)}$. Regard $R = \ovt_{j = 1}^\infty (\mathbf M_2(\C), \tau_2)$ and put $R_k = \ovt_{j = 1}^k (\mathbf M_2(\C), \tau_2)$. 

For every $k \geq 1$, define the unital finite dimensional $\ast$-subalgebra $Q_k \subset Q$ by
$$Q_k = \left(\mathcal Z_0^{k} \otimes R_k \right) \oplus \bigoplus_{1 \leq n \leq k} \left( \mathcal Z_n^{(k)} \otimes \mathbf M_n(\C) \right) \oplus \C \sum_{n \geq k + 1} q_n.$$
Then $(Q_k)_{k \geq 1}$ is increasing, $\bigvee_k Q_k = Q$ and moreover
$$Q_k' \cap Q = \left(\mathcal Z_0 \ovt (R_k' \cap R) \right) \oplus \bigoplus_{1 \leq n \leq k} \left( \mathcal Z_n \otimes \C 1_{\mathbf M_n(\C)} \right) \oplus \bigoplus_{n \geq k + 1} \left( \mathcal Z_n \otimes \mathbf M_n(\C) \right).$$
Therefore, $Q_k' \cap Q \subset Q$ has finite index for all $k \geq 1$.

Since $Q \npreceq_M N$, we have $Q'_k \cap Q \npreceq_M N$ for all $k \geq 1$ by Remark \ref{remark-intertwining}. For every $k \geq 1$, choose $u_k \in \mathcal U(Q_k' \cap Q)$ such that $\|E_N(u_k)\|_2 \leq \frac1k \|z^\perp\|_2$. Put $u = (u_k) \in \mathcal U(Q' \cap Q^\omega)$ and observe that $u \in (M^\omega \ominus N^\omega) \cap P'$.

Since the inclusion $N \subset M$ has the asymptotic orthogonality property relative to $A$, we have $(y - E_N(y)) u \perp u (y - E_N(y))$ in $\LL^2(M^\omega)$ for all $y \in Q$. Since $yu = uy$ for all $y \in Q$, we get
\begin{equation}\label{equality}
\|E_N(y) u - u E_N(y)\|_2^2 = \|(y - E_N(y)) u\|_2^2 + \|u (y - E_N(y))\|_2^2 = 2 \|y - E_N(y)\|_2^2.
\end{equation}
Let $k \in \N$ large enough such that $\|E_N(u_k)\|_2 \leq \frac14 \|z^\perp\|_2$. We get $\|E_N(u_k) u - u E_N(u_k) \|_2 \leq \frac12 \|z^\perp\|_2$ and $\|u_k - E_N(u_k)\|_2 \geq \frac34 \|z^\perp\|_2$. This contradicts Equation $(\ref{equality})$.

Thus, we have $Q \preceq_M N$. There exist $k \geq 1$, a projection $p \in \mathbf M_k(N)$, a nonzero partial isometry $v \in \mathbf M_{1,k} (z^\perp M)p$ and a unital normal $\ast$-homomorphism $\varphi : Q \to p\mathbf M_k(N)p$ such that $a v = v \varphi(a)$ for all $a \in Q$. Write $v = [v_1 \cdots v_k] \in \mathbf M_{1,k} (z^\perp M)p$. In particular, we have $Q v_i \subset \sum_{j = 1}^k v_j N$ for all $1 \leq i \leq k$, whence $A v_i \subset \sum_{j = 1}^k v_j N$ for all $1 \leq i \leq k$. Since the inclusion $N \subset M$ is weakly mixing through $A$, we obtain that $v_i \in N$ for all $1 \leq i \leq k$ by Corollary \ref{weak-mixing-consequence}. Therefore $vv^* \in Q' \cap z^\perp N z^\perp$ and $Q vv^* \subset vv^* N vv^*$. We obtain $P(z + vv^*) \subset (z + vv^*) N (z + vv^*)$. This contradicts the fact that $z$ is the maximum projection $p \in P' \cap N$ with the property that $Pp \subset p N p$. Consequently, $z = 1$ and so $P \subset N$. 
\end{proof}

\begin{proof}[Proof of Theorem \ref{thmD}]
Put $M = \Gamma(H_\R)\dpr \rtimes_\pi G$ and $N = \Gamma(H_\R \ominus K_\R)\dpr \rtimes_{\pi_{H \ominus K}}G$. The inclusion $N \subset M$ is weakly mixing through $\LL(G)$ by Proposition \ref{weak-mixing-through} and has the asymptotic orthogonality property relative to $\LL(G)$ by Theorem \ref{relative-AOP}. Theorem \ref{thmD} is now a consequence of Theorem \ref{general-result}.
\end{proof}

\begin{proof}[Proof of Theorem \ref{thmE}]
Put $M = \Gamma(H_\R)\dpr \rtimes_\pi G$ and $N = \Gamma(H_\R \ominus K_\R)\dpr \rtimes_{\pi_{H \ominus K}}G$. Let $\LL(G) \subset P \subset M$ be any intermediate  von Neumann subalgebra with property Gamma. Our aim is to show that in fact $P \subset N$.

Since the inclusion $N \subset M$ is mixing by Proposition \ref{mixing-inclusion}, we have $P' \cap M \subset \LL(G)' \cap M \subset N$ by Corollary $\ref{weak-mixing-consequence}$, whence $P' \cap M = P' \cap N$. The set of projections $p \in P' \cap N$ with the property that $Pp$ is amenable attains its maximum in a projection $z$ that belongs to $\mathcal Z(\mathcal N_M(P)\dpr)$ and hence $ z \in \mathcal Z(P' \cap N)$ (see e.g.\ \cite[Lemma 2.6]{BV12}). Since the intermediate von Neumann subalgebra $\LL(G) \subset Pz \oplus \LL(G)z^\perp \subset M$ is amenable, we have $Pz \oplus \LL(G)z^\perp \subset N$ by Theorem \ref{thmD}, whence $Pz \subset zNz$. Put $z^\perp = 1 - z$. It remains to prove that $P z^\perp \subset z^\perp N z^\perp$.

The set of projections $p_0 \in (P z^\perp)' \cap z^\perp N z^\perp$ with the property that $(Pz^\perp)p_0 \subset p_0(z^\perp N z^\perp)p_0$, attains its maximum in a projection $z_0$ that belongs to $\mathcal Z((P z^\perp)' \cap z^\perp N z^\perp)$. Our aim is to show that $z_0 = z^\perp$. Put $q = z^\perp - z_0$ and observe that $q \in \mathcal Z(P' \cap N)$.

Assume by contradiction that $q \neq 0$. Let $\omega \in \beta(\N) \setminus \N$ be a free ultrafilter. Put $(Pq)_\omega = (P q)' \cap q M^\omega q$. Since $P' \cap P^\omega$ is diffuse, since $P' \cap P^\omega \subset P' \cap M^\omega$ and since $(Pq)_\omega = q(P' \cap M^\omega)q$, we get that $(Pq)_\omega$ is diffuse. There are two cases to consider:

$(1)$ Assume $(P q)_\omega \preceq_{M^\omega} \LL(G)^\omega$. Since $\LL(G)^\omega \subset N^\omega$ is a unital von Neumann subalgebra, we have $(P q)_\omega \preceq_{M^\omega} N^\omega$. Since the inclusion $N \subset M$ is mixing by Proposition \ref{mixing-inclusion} ans since $(P q)_\omega$ is diffuse, we get $Pq \preceq_M N$ by \cite[Lemma 9.5]{ioana-cartan}. 

$(2)$ Assume $(P q)_\omega \npreceq_{M^\omega} \LL(G)^\omega$. We now use an idea due to Peterson (see \cite[Theorem 4.5]{peterson-L2}). Recall that $(\theta_t)$ is the malleable deformation introduced in Section $\ref{deformation}$. Since $z$ is the maximum projection $p \in P' \cap M = P' \cap N$ such that $Pp$ is amenable and since $\LL(G)$ is amenable, we have that $(Pz^\perp)p$ is not amenable relative to $\LL(G)$ inside $M$ for all nonzero projection $p \in \mathcal Z((P z^\perp)' \cap z^\perp M z^\perp)$. Therefore, the deformation $(\theta_t^\omega)$ necessarily converges uniformly to $\id$ in $\|\cdot\|_2$ on $\mathcal U((P q)_\omega)$ by Theorem \ref{uniform-spectral-gap}. Let $\varepsilon > 0$. Choose $t > 0$ such that $\|v - \theta_t^\omega(v)\|_2 < \frac{\varepsilon^2}{8}$ for all $v \in \mathcal U((P q)_\omega)$.

Let $x \in (P q)_1$. Fix a $\|\cdot\|_2$ dense sequence $(y_i)_{i \geq 1}$ in $(q M)_1$. For every $n \geq 1$, there exists a unitary $v_n \in \mathcal U((P q)_\omega)$ such that $\|E_{\LL(G)^\omega}(y_i^* v_n y_j)\|_2 < \frac1n$ for all $1 \leq i, j \leq n$. Write $v_n = (v_{k, n}) \in \mathcal U((P q)_\omega)$ with $v_{k, n} \in \mathcal U(q M q)$ such that $\lim_{k \to \omega} \|v_{k, n} x - x v_{k, n}\|_2 = 0$ for all $n \geq 1$. Observe that $\|E_{\LL(G)^\omega}(y_i^* v_n y_j)\|_2 = \lim_{k \to \omega} \|E_{\LL(G)}(y_i^* v_{k, n} y_j)\|_2$ and $\|v_n - \theta_t^\omega(v_n)\|_2 = \lim_{k \to \omega} \|v_{k, n} - \theta_t(v_{k, n})\|_2$ for all $n \geq 1$. 

Thus, for all $n \geq 1$, there exists $k_n \in \N$ such that with $w_n = v_{n, k_n} \in \mathcal U(q M q)$, we have: 
\begin{itemize}
\item $\|w_n x - x w_n\|_2 \leq \frac1n$;
\item $\|E_{\LL(G)}(y_i^* w_n y_j)\|_2 \leq \frac1n$ for all $1 \leq i, j \leq n$;
\item $\|w_n - \theta_t(w_n)\|_2 \leq \frac{\varepsilon^2}{4}$.
\end{itemize}
Observe that since $(y_i)_{i \geq 1}$ is $\|\cdot\|_2$ dense in $(q M)_1$, we have that $\lim_n \|E_{\LL(G)}(c^* w_n d)\|_2 = 0$ for all $c, d \in q M$.

Put $\delta_t(y) = \theta_t(y) - (E_M \circ \theta_t)(y) \in \LL^2(\widetilde M \ominus M)$ for all $y \in M$. For all $n \geq 1$, we have
\begin{align}\label{mixing-delta}
\|\delta_t(x)\|_2^2 = \langle \delta_t(x), \delta_t(x)\rangle &\leq | \langle \delta_t(w_n x w_n^*), \delta_t(x)\rangle | + \|w_n x w_n^* - x\|_2 \\ \nonumber
& \leq | \langle w_n \delta_t(x) w_n^*, \delta_t(x)\rangle | + \|w_n x w_n^* - x\|_2 + 2 \|w_n - \theta_t(w_n)\|_2 \\ \nonumber
& \leq | \langle w_n \delta_t(x) w_n^*, \delta_t(x)\rangle | + \frac1n + \frac{\varepsilon^2}{2}.
\end{align}

\begin{claim}
Let $a_n \in (M)_1$ be a sequence such that $\lim_n \|E_{\LL(G)}(c^* a_n d)\|_2 = 0$ for all $c, d \in (M)_1$ and let $b_n \in (M)_1$ be any sequence. Then
$$\lim_n |\langle a_n \xi b_n, \eta \rangle| = 0, \forall \xi, \eta \in \LL^2(\widetilde M \ominus M).$$ 
\end{claim}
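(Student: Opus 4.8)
The plan is to exploit the amalgamated free product structure $\widetilde M = M \ast_{\LL(G)} M$ together with the $M$-$M$-bimodule identification $\LL^2(\widetilde M \ominus M) \cong \LL^2(M)\otimes_{\LL(G)}\mathcal K$ already used in the proof of Theorem \ref{thmB}, where $\mathcal K$ is an $\LL(G)$-$M$-bimodule, the left $M$-action sitting on the $\LL^2(M)$-leg and the right $M$-action on $\mathcal K$. The whole point is that, under this isomorphism, the two prescribed actions of $a_n$ (left) and $b_n$ (right) act on \emph{separate} tensor legs, which decouples them and isolates the single factor $a_n$ between a conditional expectation onto $\LL(G)$.

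First I would reduce, by sesquilinearity and the uniform bound $\|a_n\|_\infty,\|b_n\|_\infty\le 1$, to the case where $\xi$ and $\eta$ are finite sums of elementary tensors $\zeta\otimes_{\LL(G)}\kappa$ with $\zeta\in(M)_1$ and $\kappa\in\mathcal K$ a bounded vector (for both the left $\LL(G)$- and the right $M$-actions); such tensors span a dense subspace, and the approximation error is bounded uniformly in $n$ by $\|a_n\|_\infty\|b_n\|_\infty(\|\xi-\xi'\|\,\|\eta\| + \|\xi'\|\,\|\eta-\eta'\|)$. Writing $\xi=\sum_j \zeta_j\otimes_{\LL(G)}\kappa_j$ and $\eta=\sum_i \zeta_i'\otimes_{\LL(G)}\kappa_i'$, the separation of legs gives $a_n\xi b_n=\sum_j (a_n\zeta_j)\otimes_{\LL(G)}(\kappa_j b_n)$, and the Connes fusion inner product $\langle x\otimes_{\LL(G)}\kappa,\,x'\otimes_{\LL(G)}\kappa'\rangle=\langle\kappa,\,E_{\LL(G)}(x^*x')\kappa'\rangle$ yields
$$\langle a_n\xi b_n,\eta\rangle=\sum_{i,j}\big\langle \kappa_j b_n,\ E_{\LL(G)}(\zeta_j^*a_n^*\zeta_i')\,\kappa_i'\big\rangle .$$
Setting $\beta_{i,j,n}=E_{\LL(G)}(\zeta_j^*a_n^*\zeta_i')\in\LL(G)$ and using that $\|\cdot\|_2$ and $E_{\LL(G)}$ commute with the adjoint, one gets $\|\beta_{i,j,n}\|_2=\|E_{\LL(G)}((\zeta_i')^*a_n\zeta_j)\|_2$, which tends to $0$ by the standing hypothesis applied with $c=\zeta_i'$ and $d=\zeta_j$ (both in $(M)_1$ after rescaling by the fixed constants $\max_i\|\zeta_i'\|_\infty$, $\max_j\|\zeta_j\|_\infty$).

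The only point requiring care — and hence the mild obstacle — is that each summand must be controlled through $\|\beta_{i,j,n}\|_2$ rather than $\|\beta_{i,j,n}\|_\infty$, since we have no mixing hypothesis forcing $\|\beta_{i,j,n}\|_\infty\to 0$. This is exactly where left-boundedness of $\kappa_i'$ enters: because $\beta_{i,j,n}\in\LL(G)$ and $\kappa_i'$ is a bounded vector, one has $\|\beta_{i,j,n}\kappa_i'\|\le \|\kappa_i'\|_{\mathrm{bdd}}\,\|\beta_{i,j,n}\|_2$, while $\|\kappa_j b_n\|\le\|\kappa_j\|\,\|b_n\|_\infty\le\|\kappa_j\|$. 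Cauchy--Schwarz then bounds the $(i,j)$-summand by a constant (depending only on the fixed letters) times $\|\beta_{i,j,n}\|_2\to 0$, and summing over the finitely many indices gives $\lim_n\langle a_n\xi b_n,\eta\rangle=0$. Thus the entire argument is bookkeeping once one (i) routes $a_n$ through $E_{\LL(G)}$ flanked by the bounded letters $\zeta_i',\zeta_j$ so the hypothesis applies, and (ii) estimates the resulting $\LL(G)$-valued coefficient in $\|\cdot\|_2$ by letting it act on a bounded vector. A direct reduced-word computation in $\widetilde M=M\ast_{\LL(G)}M$ (tracking which words survive the pairing with $\eta$ after the Wick-type reductions) gives the same conclusion, but the bimodule route above is the cleanest.
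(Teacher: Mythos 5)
Your proof is correct, but it takes a genuinely different route from the paper's. The paper proves the Claim by a bare-hands trace computation inside the amalgamated free product $\widetilde M = M \ast_{\LL(G)} \theta_1(M)$: it reduces to the case where $\xi = x_1 x_2 \cdots x_{2k+1}$ and $\eta = y_1 y_2 \cdots y_{2\ell+1}$ are reduced words whose interior letters alternate between $\theta_1(M) \ominus \LL(G)$ and $M \ominus \LL(G)$, and then evaluates $\langle a_n \xi b_n, \eta\rangle = \tau(y_{2\ell+1}^* \cdots y_1^* a_n x_1 \cdots x_{2k+1} b_n)$ by inserting conditional expectations; freeness with amalgamation over $\LL(G)$ collapses this to $\tau\bigl(y_{2\ell+1}^* \cdots y_2^* \, E_{\LL(G)}(y_1^* a_n x_1) \, x_2 \cdots x_{2k+1} b_n\bigr)$, which Cauchy--Schwarz bounds by $\|E_{\LL(G)}(y_1^* a_n x_1)\|_2 \to 0$. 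Your argument black-boxes exactly this word structure into the $M$-$M$-bimodule identification $\LL^2(\widetilde M \ominus M) \cong \LL^2(M) \otimes_{\LL(G)} \mathcal K$ --- which the paper asserts and uses in the proofs of Theorem \ref{uniform-spectral-gap} and Theorem \ref{thmB}, but deliberately does not invoke in the proof of the Claim --- and then runs the same mechanism at the level of the fusion inner product: your coefficient $E_{\LL(G)}(\zeta_j^* a_n^* \zeta_i')$ plays the role of $E_{\LL(G)}(y_1^* a_n x_1)$, and left-boundedness of the vectors $\kappa_i'$ replaces the Cauchy--Schwarz estimate on the word tails. Both proofs hinge on the identical key point: $a_n$ gets routed through $E_{\LL(G)}$ flanked by fixed elements of $M$, while $b_n$ is only ever estimated in operator norm. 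What your route buys is modularity --- no Wick/word bookkeeping, and the argument applies verbatim whenever such a bimodule decomposition is available; what it costs is reliance on standard-but-nontrivial facts about Connes fusion (totality of elementary tensors $\zeta \otimes_{\LL(G)} \kappa$ with $\zeta \in M$ and $\kappa$ left-$\LL(G)$-bounded, and the formula $\langle x \otimes_{\LL(G)} \kappa, x' \otimes_{\LL(G)} \kappa'\rangle = \langle \kappa, E_{\LL(G)}(x^* x')\kappa'\rangle$), which the paper's self-contained computation avoids. Two minor remarks: right-$M$-boundedness of the $\kappa_j$ is not actually needed, since $\|\kappa_j b_n\| \leq \|\kappa_j\| \, \|b_n\|_\infty$ holds for arbitrary vectors of $\mathcal K$, so only left-$\LL(G)$-boundedness of the $\kappa_i'$ coming from $\eta$ is used; and your density reduction should note that $\kappa \mapsto \zeta \otimes_{\LL(G)} \kappa$ is continuous (with norm at most $\|\zeta\|_\infty$), so that restricting to left-bounded $\kappa$ loses no generality.
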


\begin{proof}[Proof of the Claim]
Recall that $\widetilde M = M \ast_{\LL(G)} \theta_1(M)$. It suffices to prove the Claim for $\xi, \eta \in \widetilde M \ominus M$ words of the form
$$\xi = x_1 x_2 \cdots x_{2k} x_{2k + 1} \; \mbox{ and } \; \eta = y_1 y_2 \cdots y_{2\ell} y_{2\ell + 1}$$
where $k, \ell \geq 1$; $x_1, x_{2k + 1}, y_1, y_{2 \ell + 1} \in M$; $x_{2i}, y_{2j} \in \theta_1(M) \ominus \LL(G)$ for all $1 \leq i \leq k$ and all $1 \leq j \leq \ell$; $x_{2i + 1}, y_{2j + 1} \in M \ominus \LL(G)$ for all $1 \leq i \leq k - 1$ and all $1 \leq j \leq \ell - 1$. We may moreover assume that 
$$\sup \{ \|x_{2i}\|_\infty, \|x_{2i \pm 1}\|_\infty, \|y_{2j}\|_\infty, \|y_{2j \pm 1}\|_\infty : 1 \leq i \leq k, 1 \leq j \leq \ell \} \leq 1.$$

Using the freeness with amalgamation over $\LL(G)$, we get
\begin{align*}
|\langle a_n \xi b_n, \eta \rangle| &= |\tau(y_{2\ell + 1}^* y_{2\ell}^* \cdots y_2^* \, y_1^* a_n x_1 \, x_2 \cdots x_{2k} x_{2k + 1} b_n)| \\
&= |\tau(y_{2\ell + 1}^* E_M(y_{2\ell}^* \cdots y_2^* \, y_1^* a_n x_1 \, x_2 \cdots x_{2k}) x_{2k + 1} b_n)| \\
&= |\tau(y_{2\ell + 1}^* E_M(y_{2\ell}^* \cdots y_2^* \, E_{\LL(G)}(y_1^* a_n x_1) \, x_2 \cdots x_{2k}) x_{2k + 1} b_n)| \\
&= |\tau(y_{2\ell + 1}^* y_{2\ell}^* \cdots y_2^* \, E_{\LL(G)}(y_1^* a_n x_1) \, x_2 \cdots x_{2k} x_{2k + 1} b_n)| \\
& \leq \|E_{\LL(G)}(y_1^* a_n x_1)\|_2.
\end{align*}
Therefore $\lim_n |\langle a_n \xi b_n, \eta \rangle| = 0$.
\end{proof}

Since $\lim_n \|E_{\LL(G)}(c^* w_n d)\|_2 = 0$ for all $c, d \in q M$ and since $\delta_t(x) \in \LL^2(\widetilde M \ominus M)$, the Claim yields $\lim_n | \langle w_n \delta_t(x) w_n^*, \delta_t(x)\rangle | = 0$. With the above inequality $(\ref{mixing-delta})$ and Proposition \ref{calculation}, we get
$$\|x - \theta_t(x)\|_2 \leq \sqrt{2} \|\delta_t(x)\|_2 \leq \varepsilon, \forall x \in (P q)_1.$$
By Theorem \ref{intertwining1}, we obtain $P q \preceq_M \LL(G)$, whence $P q \preceq_M N$.

Therefore, in both cases we obtain $Pq \preceq_M N$. Since the inclusion $N \subset M$ is mixing, the end of the proof of Theorem \ref{general-result} yields a nonzero projection $q_0 \in (Pq)' \cap  q N q$ such that $q_0 \leq q = z^\perp - z_0$ and $P q_0 \subset q_0 N q_0$. We obtain $(Pz^\perp)(z_0 + q_0) \subset (z_0 + q_0) (z^\perp N z^\perp) (z_0 + q_0)$. This contradicts the fact that $z_0$  is the maximum projection $p_0 \in (P z^\perp)' \cap  z^\perp N z^\perp$ with the property that $(P z^\perp) p_0 \subset p_0 (z^\perp  N z^\perp)  p_0$. Therefore, $z_0 = z^\perp$ and $Pz^\perp \subset  z^\perp N z^\perp$. This finally yields $P \subset N$ and finishes the proof of Theorem~\ref{thmE}.
\end{proof}

\section{Approximation properties for $\Gamma(H_\R)\dpr \rtimes_\pi G$}

\subsection{Complete bounded approximation property}

We refer to \cite[Chapter 12]{BO} for the notion of {\em weak amenability} for discrete groups $G$ and the definition of $\Lambda_{\cb}(G)$. 

Let $(M, \tau)$ be a tracial von Neumann algebra. Following \cite{cowling-haagerup}, we say that $M$ has the {\em completely bounded approximation property} if there exist $\kappa > 0$ and a net $\Phi_n : M \to M$ of normal finite rank (completely bounded) maps such that
\begin{enumerate}
\item $\lim_n \|\Phi_n(x) - x\|_2 = 0$ for all $x \in M$.
\item $\sup_n \|\Phi_n\|_{\cb} \leq \kappa$.
\end{enumerate}
The Cowling-Haagerup constant $\Lambda_{\cb}(M)$ is defined as the infimum of all values of $\kappa$ for which such nets exist. By \cite[Theorem 12.3.10]{BO}, we have that $\Lambda_{\cb}(\LL(G)) = \Lambda_{\cb}(G)$ for all countable discrete groups $G$.

\begin{theo}
Let $G$ be any countable discrete group and $\pi : G \to \mathcal O(H_\R)$ any compact orthogonal representation. Then $\Lambda_{\cb}(\Gamma(H_\R)\dpr \rtimes_\pi G) = \Lambda_{\cb}(G)$.
\end{theo}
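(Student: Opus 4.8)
The plan is to prove the two inequalities $\Lambda_{\cb}(G) \leq \Lambda_{\cb}(M)$ and $\Lambda_{\cb}(M) \leq \Lambda_{\cb}(G)$ separately, where $M = \Gamma(H_\R)\dpr \rtimes_\pi G$. The first inequality is soft: since $\LL(G) \subset M$ is a von Neumann subalgebra carrying the trace-preserving conditional expectation $E_{\LL(G)} : M \to \LL(G)$, the Cowling--Haagerup constant is monotone under such inclusions. Indeed, if a net $(\Phi_n)$ witnesses $\Lambda_{\cb}(M) \leq \kappa$, then $(E_{\LL(G)} \circ \Phi_n|_{\LL(G)})$ is a net of finite-rank cb maps witnessing $\Lambda_{\cb}(\LL(G)) \leq \kappa$. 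Combined with the identity $\Lambda_{\cb}(\LL(G)) = \Lambda_{\cb}(G)$ quoted from \cite[Theorem 12.3.10]{BO}, this yields $\Lambda_{\cb}(G) \leq \Lambda_{\cb}(M)$.

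For the reverse inequality, fix $\varepsilon > 0$. By weak amenability of $G$ choose finitely supported multipliers $\phi_k : G \to \C$ with $\phi_k \to 1$ pointwise and $\sup_k \|\phi_k\|_{B_2(G)} \leq \Lambda_{\cb}(G) + \varepsilon$. Each $\phi_k$ lifts to a normal finite-rank-in-$G$ completely bounded map $T_{\phi_k} : M \to M$ determined by $T_{\phi_k}(x u_g) = \phi_k(g)\, x u_g$ for $x \in \Gamma(H_\R)\dpr$ and $g \in G$, with $\|T_{\phi_k}\|_{\cb} \leq \|\phi_k\|_{B_2(G)}$: this is the standard fact that a Herz--Schur multiplier of $G$ induces, via Fell absorption, a cb multiplier on any crossed product $N \rtimes G$ with the same norm bound. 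These maps act as the identity on the infinite-dimensional algebra $\Gamma(H_\R)\dpr$, so they must be composed with finite-rank approximations of the Gaussian part, and this is precisely where the compactness of $\pi$ enters.

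Write $H_\R = \bigoplus_i V_i$ as an orthogonal direct sum of finite-dimensional $\pi(G)$-invariant subspaces and put $K_\R^{(j)} = V_1 \oplus \cdots \oplus V_j$, an increasing sequence of finite-dimensional $\pi(G)$-invariant subspaces with dense union. Let $E_j : \Gamma(H_\R)\dpr \to \Gamma(K_\R^{(j)})\dpr$ be the trace-preserving conditional expectation; since $K_\R^{(j)}$ is $\pi(G)$-invariant, $E_j$ is $G$-equivariant and ucp, and $E_j \to \id$ pointwise in $\|\cdot\|_2$. On the factor $\Gamma(K_\R^{(j)})\dpr \cong \LL(\F_{\dim K_\R^{(j)}})$, the free group factor has the complete metric approximation property, so there are finite-rank cb maps $\psi_\alpha$ with $\|\psi_\alpha\|_{\cb} \to 1$ and $\psi_\alpha \to \id$ pointwise, whose ranges may be taken inside the finite-dimensional word-length levels $\bigoplus_{n \leq N} (K^{(j)})^{\otimes n}$ (writing $K^{(j)}$ for the complexification of $K_\R^{(j)}$). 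The closure $\mathbf{G}_j = \overline{\pi(G)|_{K_\R^{(j)}}} \subset \mathcal{O}(K_\R^{(j)})$ is a compact group through which the free Bogoljubov action on $\Gamma(K_\R^{(j)})\dpr$ factors; averaging $\psi_\alpha^{\mathbf{G}_j} = \int_{\mathbf{G}_j} \beta_\gamma^{-1} \circ \psi_\alpha \circ \beta_\gamma \, d\gamma$ over the Haar measure yields $\mathbf{G}_j$-equivariant (hence $G$-equivariant) cb maps with $\|\psi_\alpha^{\mathbf{G}_j}\|_{\cb} \leq \|\psi_\alpha\|_{\cb}$. These remain finite rank because every $\beta_\gamma$ preserves each finite-dimensional level $(K^{(j)})^{\otimes n}$, so the averaged ranges still sit inside $\bigoplus_{n \leq N}(K^{(j)})^{\otimes n}$, and one checks $\psi_\alpha^{\mathbf{G}_j} \to \id$ pointwise by a routine compactness argument on $\mathbf{G}_j$-orbits.

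Putting the pieces together, $\Psi_{j,\alpha} := \psi_\alpha^{\mathbf{G}_j} \circ E_j$ is a $G$-equivariant finite-rank cb map on $\Gamma(H_\R)\dpr$ converging to $\id$ pointwise with $\limsup \|\Psi_{j,\alpha}\|_{\cb} \leq 1$, and by equivariance it extends to $\widetilde{\Psi}_{j,\alpha} = \Psi_{j,\alpha} \rtimes \id$ on $M$ with the same cb bound. The composites $\Xi = T_{\phi_k} \circ \widetilde{\Psi}_{j,\alpha}$ are then finite rank on $M$, since their range lies in the finite-dimensional span of $\{W(\xi) u_g : \xi \in \bigoplus_{n \leq N}(K^{(j)})^{\otimes n},\ g \in \supp \phi_k\}$; they satisfy $\|\Xi\|_{\cb} \leq (\Lambda_{\cb}(G) + \varepsilon)\,(1 + o(1))$ and converge to $\id$ pointwise in $\|\cdot\|_2$ along a suitable diagonal net. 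Letting $\varepsilon \to 0$ gives $\Lambda_{\cb}(M) \leq \Lambda_{\cb}(G)$, which completes the proof. I expect the main obstacle to be keeping the cb constant under control, namely arranging the group multiplier and the Gaussian-part finite-rank maps to be simultaneously compatible with the $G$-action; it is exactly the compactness of $\pi$ — furnishing a finite-dimensional invariant exhaustion and a compact structure group to average over — that prevents the truncation to finite rank from inflating the cb norm.
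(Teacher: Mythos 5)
Your overall architecture matches the paper's: the easy inequality via the trace-preserving conditional expectation onto $\LL(G)$, and the hard inequality by composing three kinds of maps --- Herz--Schur multipliers coming from weak amenability of $G$, a truncation onto $\Gamma(K_\R^{(j)})\dpr$ for an exhausting sequence of finite-dimensional $\pi(G)$-invariant subspaces (your $E_j$ is exactly the paper's second quantization $\Gamma(E_p)$), and finite-rank approximations of the Gaussian part compatible with the $G$-action. The one place where you genuinely diverge is also where there is a genuine gap: you invoke the complete metric approximation property of $\LL(\F_d) \cong \Gamma(K_\R^{(j)})\dpr$ and assert that the finite-rank witnesses $\psi_\alpha$ ``may be taken inside the finite-dimensional word-length levels $\bigoplus_{n \leq N} (K^{(j)})^{\otimes n}$.'' This does not follow from CMAP. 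Haagerup's CMAP witnesses for $\F_d$ are group multipliers whose ranges sit in spans of $\{\lambda_g : |g| \leq N\}$ for the \emph{group} word length, and under the isomorphism $\LL(\F_d) \cong \Gamma(K_\R^{(j)})\dpr$ these group levels are neither the tensor word levels nor invariant under the Bogoljubov transformations $\beta_\gamma$. Nor can you repair this by compressing an arbitrary CMAP witness onto the tensor levels: the projections onto $\bigoplus_{n \leq N}(K^{(j)})^{\otimes n}$ are \emph{not} uniformly completely bounded in $N$, so composing destroys the $\|\cdot\|_{\cb} \to 1$ control. The statement you need --- finite-rank maps with cb norm tending to $1$, converging pointwise to the identity, and ranged in tensor word-length levels --- is precisely the radial multiplier theorem of Houdayer--Ricard \cite[Corollary 3.14]{houdayer-ricard}, which is the nontrivial input the paper's proof rests on; it is the free-Gaussian analogue of Haagerup's theorem, not a formal consequence of CMAP.

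The gap matters because your averaging step silently depends on it. If the ranges of the $\psi_\alpha$ are not contained in a fixed finite-dimensional $\mathbf{G}_j$-invariant subspace, then the ranges of $\beta_\gamma^{-1} \circ \psi_\alpha \circ \beta_\gamma$ vary with $\gamma$, and the Haar average $\int_{\mathbf{G}_j} \beta_\gamma^{-1} \circ \psi_\alpha \circ \beta_\gamma \, d\gamma$ is in general \emph{not} finite rank: its range is only contained in the closed span of $\bigcup_\gamma \beta_\gamma^{-1}(\operatorname{ran} \psi_\alpha)$, which is typically infinite dimensional. So the finite-rank conclusion for $\Xi = T_{\phi_k} \circ \widetilde{\Psi}_{j,\alpha}$ collapses exactly when the word-level claim fails. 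Once you do import \cite[Corollary 3.14]{houdayer-ricard}, note that radial multipliers commute with $\Ad(\mathcal F(U))$ for \emph{every} orthogonal $U$, hence are automatically $G$-equivariant; your compact-group averaging then becomes redundant and the argument reduces to the paper's composition ${\rm m}_{\varphi_n} \circ \Gamma(E_p) \circ {\rm m}_{\psi_q}$. (The remaining steps of your proposal --- equivariant extension to the crossed product with the same cb norm, the finite-rank count for the composite, and the diagonal limit --- are fine and agree with the paper.)
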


\begin{proof}
We obviously have $\Lambda_{\cb}(\Gamma(H_\R)\dpr \rtimes_\pi G) \geq \Lambda_{\cb}(G)$. To prove the reverse inequality, we use techniques and results from \cite[Section 3]{houdayer-ricard}. We may and will assume that $\Lambda_{\cb}(G) < \infty$.

By \cite[Corollary 3.14]{houdayer-ricard}, there exists a sequence $\varphi_n : \N \to \C$ of finitely supported functions such that $\lim_n \varphi_n = 1$ pointwise and the corresponding unital trace-preserving {\em radial} multipliers ${\rm m}_{\varphi_n} : \Gamma(H_\R)\dpr \to \Gamma(H_\R)\dpr$ defined by 
$${\rm m}_{\varphi_n}(W(e_1 \otimes \cdots \otimes e_r)) = \varphi_n(r) W(e_1 \otimes \cdots \otimes e_r)$$
satisfy $\limsup_n \|{\rm m}_{\varphi_n}\|_{\cb} = 1$. Observe that since the radial multipliers ${\rm m}_{\varphi_n}$ commute with the free Bogoljubov action $\sigma_\pi$, we may extend ${\rm m}_{\varphi_n}$ to $\Gamma(H_\R)\dpr \rtimes G$ by the formula 
$${\rm m}_{\varphi_n}(W(e_1 \otimes \cdots \otimes e_r) u_g) = \varphi_n(r) W(e_1 \otimes \cdots \otimes e_r) u_g.$$
We still have $\limsup_n \|{\rm m}_{\varphi_n}\|_{\cb} = 1$.

Next, since $\pi$ is compact, we can write $\pi = \bigoplus_{j \in \N} \pi_j$ and $H_\R = \bigoplus_{j \in \N} H_\R^{(j)}$ with $\pi_j$ a finite dimensional orthogonal representation or $\pi_j = 0$. For $p \in \N$, let $E_p : H_\R \to \bigoplus_{0 \leq j \leq p} H_\R^{(j)}$ be the orthogonal projection and denote by $\Gamma(E_p) : \Gamma(H_\R)\dpr \to \Gamma(H_\R)\dpr$ the unique trace-preserving unital completely positive multiplier (see \cite[Section 2]{voiculescu92}) defined by
$$\Gamma(E_p)(W(e_1 \otimes \cdots \otimes e_r)) = W(E_p (e_1) \otimes \cdots \otimes E_p(e_r)).$$
Observe that since the completely positive multipliers $\Gamma(E_p)$ commute with the free Bogoljubov action $\sigma_\pi$, we may extend $\Gamma(E_p)$ to $\Gamma(H_\R)\dpr \rtimes G$ by the formula 
$$\Gamma(E_p)(W(e_1 \otimes \cdots \otimes e_r) u_g) = W(E_p(e_1) \otimes \cdots \otimes E_p(e_r)) u_g.$$

Let $\varepsilon > 0$. Since $\Lambda_{\cb}(G) < \infty$, let $\psi_q : G \to \C$ be a sequence of finitely supported functions such that $\psi_q(e) = 1$ for all $q$, $\lim_q \psi_q = 1$ pointwise and the corresponding unital trace-preserving Herz-Schur multipliers ${\rm m}_{\psi_q} : \LL(G) \to \LL(G)$ defined by ${\rm m}_{\psi_q}(u_g) = \psi_q(g) u_g$ satisfy $\sup_q \|{\rm m}_{\psi_q}\|_{\cb} \leq \Lambda_{\cb}(G) + \varepsilon$. We may extend ${\rm m}_{\psi_q}$ to $\Gamma(H_\R)\dpr \rtimes_\pi G$ by the formula 
$${\rm m}_{\psi_q}(W(e_1 \otimes \cdots \otimes e_r) u_g) = \psi_q(g) W(e_1 \otimes \cdots \otimes e_r) u_g.$$
We still have $\sup_q \|{\rm m}_{\psi_q}\|_{\cb} \leq \Lambda_{\cb}(G) + \varepsilon$. 

Define the trace-preserving unital finite rank (completely bounded) maps ${\rm M}_{n, p, q} : \Gamma(H_\R)\dpr \rtimes_\pi G \to \Gamma(H_\R)\dpr \rtimes_\pi G$ by the formula ${\rm M}_{n, p, q} = {\rm m}_{\varphi_n} \circ \Gamma(E_p) \circ {\rm m}_{\psi_q}$. We have $\lim_{n, p, q} \|{\rm M}_{n, p, q}(x) - x\|_2 = 0$ for all $x \in \Gamma(H_\R)\dpr \rtimes_\pi G$ and $\sup_{n \geq n_0, p, q} \|{\rm M}_{n, p, q}\|_{\cb} \leq \Lambda_{\cb}(G) + 2 \varepsilon$, for $n_0 \in \N$ sufficiently large. Since this is true for every $\varepsilon > 0$, we get $\Lambda_{\cb}(\Gamma(H_\R)\dpr \rtimes_\pi G) \leq \Lambda_{\cb}(G)$.
\end{proof}

\subsection{Relative Haagerup property}

Let $B \subset (M, \tau)$ be an inclusion of tracial von Neumann algebras. Whenever $\varphi : M \to M$ is a trace-preserving $B$-$B$-bimodular unital completely positive map, we denote $T_\varphi \in \langle M, e_B \rangle$ the unique bounded operator on $\LL^2(M)$ defined by $T_\varphi(x) = \varphi(x)$ for all $x \in M$. 

Following \cite[Definition 2.1]{Po01}, we say that $M$ has the {\em Haagerup property relative to} $B$ if there exists a net $\varphi_n : M \to M$ of trace-preserving $B$-$B$-bimodular unital completely positive maps such that 
\begin{enumerate}
\item $\lim_n \|\varphi_n(x) - x\|_2 = 0$ for all $x \in M$.
\item $\varphi_n$ is {\em compact over} $B$ for all $n$, that is, for all $\varepsilon > 0$, there exists a finite trace projection $p \in \langle M, e_B\rangle$ such that $\|T_{\varphi_n}(1 - p)\|_\infty \leq \varepsilon$.
\end{enumerate}
 When $M$ has the Haagerup property relative to $\C$, we simply say that $M$ has the {\em Haagerup property} (see \cite{choda}).

\begin{theo}\label{haagerup}
Let $G$ be any countable discrete group and $\pi : G \to \mathcal O(H_\R)$ any orthogonal representation. The following are equivalent:
\begin{enumerate}
\item $\pi$ is compact.
\item $\Gamma(H_\R)\dpr \rtimes_\pi G$ has the Haagerup property relative to $\LL(G)$.
\item $\LL(G)$ is quasi-regular inside $\Gamma(H_\R)\dpr \rtimes_\pi G$.
\end{enumerate}
\end{theo}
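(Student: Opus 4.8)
The plan is to prove the cycle of implications $(1)\Rightarrow(2)$, $(2)\Rightarrow(1)$, $(1)\Rightarrow(3)$ and $(3)\Rightarrow(1)$, writing $M=\Gamma(H_\R)\dpr\rtimes_\pi G$ and $B=\LL(G)$ throughout; the two implications out of $(1)$ are constructive, while the two implications into $(1)$ both exploit the weakly mixing part of $\pi$. For $(1)\Rightarrow(2)$, assume $\pi=\bigoplus_j\pi_j$ with each $\pi_j$ finite dimensional. I would combine the radial contractions ${\rm m}_\rho$ from Section \ref{deformation} (which scale a length-$n$ word by $\rho^n$ and fix each $u_g$) with the second-quantized projections $\Gamma(E_p)$, where $E_p$ projects $H$ onto the complexification $K_p$ of $\bigoplus_{j\le p}H_\R^{(j)}$. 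Both families are trace-preserving unital completely positive, and since $K_p$ is $\pi(G)$-invariant and conjugation invariant they are $B$-$B$-bimodular; as $\rho\to1$ and $p\to\infty$ the maps $\varphi_{\rho,p}={\rm m}_\rho\circ\Gamma(E_p)$ converge to $\id$ in $\|\cdot\|_2$. The key point is compactness over $B$: on $\mathcal H_n\otimes\ell^2(G)$ the operator $T_{\varphi_{\rho,p}}$ acts as $\rho^n E_p^{\otimes n}\otimes\id$, and because $K_p$ is $\pi(G)$-invariant the projection $P_{N,p}$ onto $\bigoplus_{n\le N}K_p^{\otimes n}\otimes\ell^2(G)$ lies in $\langle M,e_B\rangle$ with $\Tr(P_{N,p})=\sum_{n=0}^N(\dim K_p)^n<\infty$; a direct check gives $\|T_{\varphi_{\rho,p}}(1-P_{N,p})\|_\infty\le\rho^{N+1}$, which is made arbitrarily small by taking $N$ large, so each $\varphi_{\rho,p}$ is compact over $B$.

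For $(1)\Rightarrow(3)$, still with $\pi$ compact, I would show directly that the words $W(\eta)$ with $\eta\in K_p^{\otimes n}$ belong to $\QN_M(B)$. Indeed $u_gW(\eta)u_g^*=W(\pi(g)^{\otimes n}\eta)$ with $\pi(g)^{\otimes n}\eta\in K_p^{\otimes n}$, so fixing a basis $\eta_1,\dots,\eta_m$ of the finite dimensional space $K_p^{\otimes n}$ and expanding $\pi(g)^{\otimes n}\eta$ and $\pi(g^{-1})^{\otimes n}\eta$ in this basis yields $W(\eta)B\subset\sum_iBW(\eta_i)$ and $BW(\eta)\subset\sum_iW(\eta_i)B$. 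Since $\pi$ is compact such words span a $\|\cdot\|_2$-dense subspace of $\Gamma(H_\R)\dpr$, whence $\QN_M(B)\dpr=M$. For the converse $(3)\Rightarrow(1)$, if $\pi$ is not compact its weakly mixing part is carried by a nonzero $\pi(G)$-invariant subspace $K_\R$, and Corollary \ref{weak-mixing-consequence} gives $\QN_M(B)\dpr\subset N=\Gamma(H_\R\ominus K_\R)\dpr\rtimes_{\pi_{H\ominus K}}G$, a proper subalgebra of $M$ when $K_\R\ne0$; this contradicts quasi-regularity, so $\pi$ must be compact.

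The implication $(2)\Rightarrow(1)$ I would argue by contraposition, and this is where the real work lies. Suppose $\pi$ is not compact and fix a unit vector $e$ in the weakly mixing invariant subspace $K_\R$. Weak mixing lets me choose $g_1,g_2,\dots$ in $G$ with $\langle\pi(g_j)e,\pi(g_k)e\rangle\to\delta_{jk}$, and by Proposition \ref{wick} the vectors $\zeta_k=W(\pi(g_k)e)$ then form an almost right-$B$-orthonormal system, since $E_B(\zeta_j^*\zeta_k)=\langle\pi(g_j)e,\pi(g_k)e\rangle\,1$. If $M$ had the Haagerup property relative to $B$ with maps $\varphi_n$, then $B$-bimodularity would give $\langle\varphi_n(\zeta_k),\zeta_k\rangle=\langle\varphi_n(W(e)),W(e)\rangle$ independently of $k$, a quantity tending to $1$ as $\varphi_n\to\id$. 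On the other hand, compactness over $B$ forces this diagonal to vanish as $k\to\infty$: a finite-trace projection $p\in\langle M,e_B\rangle$ satisfies $\sum_k\|p\zeta_k\|_2^2\lesssim\Tr(p)<\infty$ along an almost-$B$-orthonormal system, so $\|p\zeta_k\|_2\to0$, and then $\|T_{\varphi_n}(1-p)\|_\infty\le\varepsilon$ bounds $\limsup_k|\langle\varphi_n(\zeta_k),\zeta_k\rangle|$ by $\varepsilon$; letting $\varepsilon\to0$ contradicts that this quantity is constantly close to $1$.

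The main obstacle is the compactness-over-$B$ bookkeeping, which appears in two places. In $(1)\Rightarrow(2)$ it is the production of the finite-trace bimodular projections $P_{N,p}$ inside $\langle M,e_B\rangle$, and this is precisely where the finite dimensionality of each $\pi_j$ (i.e.\ compactness of $\pi$) is genuinely used. In $(2)\Rightarrow(1)$ it is the estimate that a $B$-compact operator has vanishing diagonal along an infinite almost-right-$B$-orthonormal system; the delicate step there is controlling the off-diagonal Gram terms $\langle\pi(g_j)e,\pi(g_k)e\rangle$ — by passing to a sufficiently sparse subsequence of $(g_k)$ extracted from the weak mixing of $\pi|K_\R$ — so that the trace bound $\sum_k\|p\zeta_k\|_2^2\lesssim\Tr(p)$ survives.
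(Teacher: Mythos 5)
Your proposal is correct, and on two of the four implications it coincides with the paper's proof: for $(1)\Rightarrow(2)$ you use the same maps ${\rm m}_{\rho}\circ\Gamma(E_p)$ (the paper merely asserts compactness over $\LL(G)$, while you verify it via the finite-trace projections $P_{N,p}$ onto $\bigoplus_{n\le N}K_p^{\otimes n}\otimes\ell^2(G)$, with $\Tr(P_{N,p})=\sum_{n=0}^{N}(\dim K_p)^n$ and $\|T_{\varphi_{\rho,p}}(1-P_{N,p})\|_\infty\le\rho^{N+1}$; this bookkeeping is exactly right), and $(3)\Rightarrow(1)$ is the same appeal to Corollary \ref{weak-mixing-consequence}. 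The genuine divergence is in how the equivalences are closed: the paper runs the cycle $(1)\Rightarrow(2)\Rightarrow(3)\Rightarrow(1)$, obtaining $(2)\Rightarrow(3)$ by citing \cite[Proposition 3.4]{Po01} (the relative Haagerup property, with bimodular maps, forces quasi-regularity), whereas you prove $(1)\Leftrightarrow(2)$ and $(1)\Leftrightarrow(3)$ separately with no appeal to that result. Your two extra arguments are sound: $(1)\Rightarrow(3)$ by exhibiting the words $W(\eta)$, $\eta\in K_p^{\otimes n}$, as explicit quasi-normalizing elements, and, the most substantial new content, $(2)\Rightarrow(1)$ by contraposition: weak mixing yields $g_k$ with $(\pi(g_k)e)_k$ almost orthonormal, the conjugated words $\zeta_k=u_{g_k}W(e)u_{g_k}^*$ are then almost right-$\LL(G)$-orthonormal, bimodularity makes the diagonal $\langle\varphi_n(\zeta_k),\zeta_k\rangle=\langle\varphi_n(W(e)),W(e)\rangle$ constant in $k$, and compactness over $\LL(G)$ forces it to vanish. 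The trace bound you flag as delicate does hold: for an exactly right-$\LL(G)$-orthonormal family the operators $\zeta_k e_{\LL(G)}\zeta_k^*$ are pairwise orthogonal projections of trace one, so $\sum_k\|p\hat\zeta_k\|_2^2\le\Tr(p)$, and your almost-orthonormal system reduces to that case because the vectors $\hat\zeta_k=\pi(g_k)e$ lie in the length-one subspace $H\otimes\ell^2(G)$, where they can be orthonormalized with perturbations tending to zero along a sufficiently sparse subsequence. In terms of trade-offs: the paper's route is shorter and concentrates all the model-specific work in Corollary \ref{weak-mixing-consequence}, at the price of a black-box citation of Popa; yours is self-contained, supplies the compactness-over-$\LL(G)$ verification the paper omits, and shows concretely that the weakly mixing part of $\pi$ obstructs relative compactness, at the price of the Gram--Schmidt bookkeeping.
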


\begin{proof}
$(1) \Rightarrow (2)$. Since $\pi$ is assumed to be compact, write $\pi = \bigoplus_{j \in \N} \pi_j$ and $H_\R = \bigoplus_{j \in \N} H_\R^{(j)}$ with $\pi_j$ a finite dimensional orthogonal representation or $\pi_j = 0$. For $p \in \N$, let $E_p : H_\R \to \bigoplus_{0 \leq j \leq p} H_\R^{(j)}$ be the orthogonal projection and denote by $\Gamma(E_p) : \Gamma(H_\R)\dpr \rtimes_\pi G \to \Gamma(H_\R)\dpr \rtimes_\pi G$ the corresponding unique trace-preserving unital completely positive multiplier. Let ${\rm m}_{\rho_t} = E_M \circ \theta_t$ be the one-parameter family of trace-preserving unital completely positive maps which appeared in Section $\ref{preliminaries}$. Define ${\rm M}_{p, t} : \Gamma(H_\R)\dpr \rtimes_\pi G \to \Gamma(H_\R)\dpr \rtimes_\pi G$ by the formula ${\rm M}_{p, t} = {\rm m}_{\rho_t} \circ \Gamma(E_p)$. Then $({\rm M}_{p, t})_{p, t}$ is a family of $\LL(G)$-$\LL(G)$-bimodular trace-preserving unital completely positive maps which are compact over $\LL(G)$. Therefore $\Gamma(H_\R)\dpr \rtimes_\pi G$ has the Haagerup property relative to $\LL(G)$.

$(2) \Rightarrow (3)$. This follows from \cite[Proposition 3.4]{Po01}. 

$(3) \Rightarrow (1)$. Denote by $K_\R$ the unique closed $\pi(G)$-invariant subspace such that $\pi_K = \pi | K_\R$ is compact and $\pi_{H \ominus K} = \pi | H_\R \ominus K_\R$ is weakly mixing. By Corollary $\ref{weak-mixing-consequence}$, we get that 
$$\Gamma(H_\R)\dpr \rtimes_\pi G = \QN_{\Gamma(H_\R)\dpr \rtimes_\pi G}(\LL(G))\dpr \subset \Gamma(K_\R)\dpr \rtimes_{\pi_K} G.$$ 
Therefore $\pi = \pi_K$ and so $\pi$ is compact.
\end{proof}

\begin{cor}
Let $G$ be any countable discrete group and $\pi : G \to \mathcal O(H_\R)$ any compact orthogonal representation. Then $\Gamma(H_\R)\dpr \rtimes_\pi G$ has the Haagerup property if and only if $G$ has the Haagerup property.
\end{cor}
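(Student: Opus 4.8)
The plan is to treat the two implications separately, deducing the ``only if'' direction from permanence of the Haagerup property under trace-preserving conditional expectations, and the ``if'' direction from Theorem~\ref{haagerup} together with an explicit construction of compact completely positive multipliers modeled on the proof of the completely bounded approximation property above.

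For the forward direction, suppose $M = \Gamma(H_\R)\dpr \rtimes_\pi G$ has the Haagerup property and let $\varphi_n : M \to M$ be trace-preserving unital completely positive maps with $T_{\varphi_n}$ compact on $\LL^2(M)$ and $\varphi_n \to \id$ in $\|\cdot\|_2$. Using the trace-preserving conditional expectation $E_{\LL(G)} : M \to \LL(G)$, I would set $\psi_n = E_{\LL(G)} \circ \varphi_n |_{\LL(G)} : \LL(G) \to \LL(G)$. These are again trace-preserving unital completely positive maps converging to $\id$ in $\|\cdot\|_2$, and $T_{\psi_n}$ is the compression $P_{\LL^2(\LL(G))} \, T_{\varphi_n} \, P_{\LL^2(\LL(G))}$ of a compact operator, hence compact. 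Thus $\LL(G)$ has the Haagerup property, and by Choda's theorem \cite{choda} the group $G$ has the Haagerup property. (This implication needs no assumption on $\pi$.)

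For the reverse direction, assume $G$ has the Haagerup property and $\pi$ is compact. By Theorem~\ref{haagerup} the algebra $M$ has the Haagerup property relative to $\LL(G)$, witnessed by the maps ${\rm M}_{p, t} = {\rm m}_{\rho_t} \circ \Gamma(E_p)$ constructed there; I would upgrade these to absolutely compact multipliers by composing with a Herz--Schur multiplier coming from $G$. Since $G$ has the Haagerup property, I choose positive-definite functions $\psi_q \in c_0(G)$ with $\psi_q(e) = 1$ and $\psi_q \to 1$ pointwise; as in the proof of the completely bounded approximation property, these extend to trace-preserving unital completely positive maps ${\rm m}_{\psi_q}$ on $M$ via ${\rm m}_{\psi_q}(W(e_1 \otimes \cdots \otimes e_r) u_g) = \psi_q(g) W(e_1 \otimes \cdots \otimes e_r) u_g$. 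I then consider ${\rm M}_{p, t, q} = {\rm m}_{\rho_t} \circ \Gamma(E_p) \circ {\rm m}_{\psi_q} : M \to M$, which are trace-preserving unital completely positive and satisfy $\lim_{p, t, q} \|{\rm M}_{p, t, q}(x) - x\|_2 = 0$ for all $x \in M$ since each of the three factors converges to $\id$.

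The crux is to show that each $T_{{\rm M}_{p, t, q}}$ is a compact operator on $\LL^2(M) = \bigoplus_{n \in \N} (\mathcal H_n \otimes \ell^2(G))$, which is precisely the Haagerup property of $M$ relative to $\C$. Writing $F_p \subset H$ for the complexification of $\bigoplus_{0 \leq j \leq p} H_\R^{(j)}$, which is finite dimensional because $\pi$ is compact, the operator $T_{{\rm M}_{p, t, q}}$ acts on the $n$-th summand as $\rho_t^{n} \, P_{F_p}^{\otimes n} \otimes D_{\psi_q}$, where $P_{F_p}$ is the orthogonal projection onto $F_p$ and $D_{\psi_q}$ is multiplication by $\psi_q$ on $\ell^2(G)$. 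For each fixed $n$ this block is the tensor product of the finite-rank operator $P_{F_p}^{\otimes n}$, of rank $(\dim F_p)^n$, with the operator $D_{\psi_q}$, which is compact because $\psi_q \in c_0(G)$; hence each block is compact, with norm at most $\rho_t^{n} \|\psi_q\|_\infty \to 0$ as $n \to \infty$ since $0 \leq \rho_t < 1$. A direct sum of compact operators whose norms tend to $0$ is compact, so $T_{{\rm M}_{p, t, q}}$ is compact and $M$ has the Haagerup property. The main obstacle is exactly this compactness verification, together with seeing that all three factors are genuinely needed: the multiplier ${\rm m}_{\psi_q}$ alone acts as $\id_{\mathcal H_n} \otimes D_{\psi_q}$, which fails to be compact once $\dim H_\R = \infty$; the factor $\Gamma(E_p)$ cuts the letters down to $F_p$ so that every word-length block becomes finite rank; and the deformation $({\rm m}_{\rho_t})$ supplies the decay $\rho_t^{n} \to 0$ in the word length $n$ that turns the bounded block-diagonal operator into a compact one. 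This is the same three-fold mechanism as in the completely bounded approximation property, with the radial multipliers there replaced here by the malleable deformation.
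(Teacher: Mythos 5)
Your proof is correct, and in the ``if'' direction it is actually more careful than the paper's own argument. The ``only if'' direction is handled exactly as in the paper (restriction to $\LL(G)$ plus Choda's theorem); your compression argument $T_{\psi_n} = e_{\LL(G)} T_{\varphi_n} e_{\LL(G)}$ is the standard justification that the paper leaves implicit. For the converse, however, the paper composes only \emph{two} maps, ${\rm M}_{n,p} = \Phi_n \circ \Gamma(E_p)$, where $\Phi_n$ is the Herz--Schur multiplier attached to a positive definite $\varphi_n \in {\rm c}_0(G)$, and cites Jolissaint's Lemma 3.3 for the compactness of $T_{{\rm M}_{n,p}}$. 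As literally written, these operators are \emph{not} compact: on $\LL^2(M) = \mathcal F(H) \otimes \ell^2(G)$ one has $T_{{\rm M}_{n,p}} = \mathcal F(E_p) \otimes D_{\varphi_n}$, and since the full Fock space $\mathcal F(F_p)$ of the nonzero finite-dimensional range of $E_p$ is still infinite dimensional, this operator restricts to $\mathrm{id} \otimes D_{\varphi_n}$ on $\mathcal F(F_p) \otimes \ell^2(G)$, which is compact only if $\varphi_n = 0$. Your third factor ${\rm m}_{\rho_t}$ --- equivalently, replacing $\Gamma(E_p)$ by the second quantization of the strict contraction $\rho_t E_p$, i.e.\ composing the maps ${\rm M}_{p,t}$ from the proof of Theorem \ref{haagerup} with the Herz--Schur multipliers --- supplies precisely the decay $\rho_t^n$ across word lengths that turns the block-diagonal operator $\bigoplus_n \bigl( \rho_t^n P_{F_p}^{\otimes n} \otimes D_{\psi_q} \bigr)$ into a compact one, and your verification (finite-rank tensor compact blocks whose norms tend to zero) is exactly right. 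So the two proofs follow the same overall strategy, but yours is self-contained where the paper outsources the key compactness claim to a citation, and your closing analysis of why all three factors are needed pinpoints, and repairs, the omission in the paper's displayed formula; this is the genuinely valuable part of your write-up.
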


\begin{proof}
Assume that  $\Gamma(H_\R)\dpr \rtimes_\pi G$ has the Haagerup property. Then $\LL(G) \subset \Gamma(H_\R)\dpr \rtimes_\pi G$ has the Haagerup property and so does $G$ by \cite{choda}.

Assume that $G$ has the Haagerup property and $\pi : G \to \mathcal O(H_\R)$ is a compact orthogonal representation. Write $\pi = \bigoplus_{j \in \N} \pi_j$ and $H_\R = \bigoplus_{j \in \N} H_\R^{(j)}$ with $\pi_j$ a finite dimensional orthogonal representation or $\pi_j = 0$. For $p \in \N$, let $E_p : H_\R \to \bigoplus_{0 \leq j \leq p} H_\R^{(j)}$ be the orthogonal projection and denote by $\Gamma(E_p) : \Gamma(H_\R)\dpr \rtimes_\pi G \to \Gamma(H_\R)\dpr \rtimes_\pi G$ the corresponding unique trace-preserving unital completely positive multiplier.

Since $G$ has the Haagerup property, let $\varphi_n : G \to \C$ be a sequence of positive definite functions such that $\varphi_n(e) = 1$ for all $n$, $\lim_n \varphi_n = 1$ pointwise and $\varphi_n \in {\rm c}_0(G)$ for all $n \in \N$. Denote by $\Phi_n : \Gamma(H_\R)\dpr \rtimes_\pi G \to \Gamma(H_\R)\dpr \rtimes_\pi G$ the corresponding trace-preserving unital completely positive maps
$$\Phi_n(W(e_1 \otimes \cdots \otimes e_r) u_g) = \varphi_n(g) W(e_1 \otimes \cdots \otimes e_r) u_g.$$

Then we have that ${\rm M}_{n, p} = \Phi_n \circ \Gamma(E_p)$ forms a sequence of trace-preserving unital completely positive maps on $\Gamma(H_\R)\dpr \rtimes_\pi G$ such that $\lim_{n, p} \|{\rm M}_{n, p}(x) - x\|_2 = 0$ for all $x \in \Gamma(H_\R)\dpr \rtimes_\pi G$ and the corresponding bounded operators $T_{{\rm M}_{n, p}}$ are compact on $\LL^2(\Gamma(H_\R)\dpr \rtimes_\pi G)$ (see \cite[Lemma 3.3]{jolissaint}). Therefore $\Gamma(H_\R)\dpr \rtimes_\pi G$ has the Haagerup property.
\end{proof}

\bibliographystyle{plain}

\end{document}